\newtheorem{theorem}{Theorem}
\theoremstyle{plain}
\newtheorem{corollary}{Corollary}
\newtheorem{definition}{Definition}
\newtheorem{lemma}{Lemma}
\newtheorem{proposition}{Proposition}
\newtheorem{remark}{Remark}
\numberwithin{equation}{section}
\newcommand{\ddfrac}[2]{\frac{\mathrm{d}#1}{\mathrm{d}#2}}
\author[Y. Holle, M. Herty and M. Westdickenberg]{}
 \email{holle@eddy.rwth-aachen.de}
 \email{herty@igpm.rwth-aachen.de}
 \email{mwest@instmath.rwth-aachen.de}
\thanks{This work has been funded by the Deutsche Forschungsgemeinschaft (DFG, German Research Foundation) projects 320021702/GRK2326 Energy, Entropy, and Dissipative Dynamics (EDDy) and HE5386/18,19.}
\thanks{$^*$ Corresponding author: Yannick Holle}
\title{New Coupling Conditions for Isentropic Flow on Networks}
\subjclass[2010]{35L65, 76N15, 82C40} %
\keywords{hyperbolic conservation laws, networks, coupling condition, isentropic gas dynamics, kinetic
model, maximum entropy dissipation}%
\begin{document}
\maketitle
\centerline{\scshape Yannick Holle$^*$}
\medskip
{\footnotesize
 \centerline{Institut f\"ur Mathematik}
   \centerline{RWTH Aachen University}
   \centerline{Templergraben 55, 52062 Aachen, Germany}
} 

\medskip

\centerline{\scshape Michael Herty}
\medskip
{\footnotesize
 \centerline{Institut f\"ur Geometrie und Praktische Mathematik}
   \centerline{RWTH Aachen University}
   \centerline{Templergraben 55, 52062 Aachen, Germany}
} 

\medskip 

\centerline{\scshape Michael Westdickenberg}
\medskip
{\footnotesize
 \centerline{Institut f\"ur Mathematik}
   \centerline{RWTH Aachen University}
   \centerline{Templergraben 55, 52062 Aachen, Germany}
} 

\begin{abstract}
We introduce new coupling conditions for isentropic flow on networks based on an artificial density at the junction. The new coupling conditions can be derived from a kinetic model by imposing a condition on energy dissipation. Existence and uniqueness of solutions to the generalized Riemann and Cauchy problem are proven. The result for the generalized Riemann problem is globally in state space. Furthermore, non-increasing energy at the junction and a maximum principle are proven. A numerical example is given in which the new conditions are the only known conditions leading to the physically correct wave types. The approach generalizes to full gas dynamics.
\end{abstract}

\section{Introduction}
\label{sec:Introduction}
We consider networks modeled by a directed graph where the dynamics on each edge are described by one-dimensional conservation laws. The dynamics are coupled at the vertices of the graph, called junctions. We are especially interested in (isentropic) gas dynamics, but there are many
other applications for example in traffic, supply chains, data networks or blood circulation. This field became
of interest to many researchers in the last two decades, see for example the overview by Bressan et al. \cite{BCGHP2014}. A main challenge is posed by prescribing suitable coupling conditions at the junction. We consider novel conditions for the system of isentropic gas, also referred to as p-system. \medskip \\
The isentropic gas equations at a junction with $k=1,\dots,d\in\mathbb N$ adjacent pipelines are given by
\begin{equation}\label{eq:IsentropicGasinIntroduction}
\begin{cases}
	\partial_t \rho_k+\partial_x(\rho_k u_k)&=0\\
	\partial_t (\rho_k u_k)+\partial_x(\rho_k u_k^2+\kappa \rho_k^\gamma)&=0
\end{cases}\quad\text{for a.e. } t>0, x>0,
\end{equation}
where $\rho_k\ge0$ denotes the gas density, $u_k\in\mathbb R$ the mean velocity, and $p=\kappa \rho^\gamma$ the pressure given by the $\gamma$-pressure law with $\kappa>0$ and $1<\gamma<3$. 
The equation is supplemented by an entropy condition, an initial condition $(\hat\rho_k,\hat u_k)(x)=(\rho_k,u_k)(0+,x)$, $x>0$ and a suitable coupling condition on the traces $(\bar\rho_k,\bar u_k)(t)=(\rho_k,u_k)(t,0+)$, $t>0$. Furthermore, we are interested in weak solutions to (\ref{eq:IsentropicGasinIntroduction}). For a general introduction to the theory of conservation laws see the book by Dafermos \cite{Da2010}.

\subsection{Previous results}
The most challenging problem in modeling (gas) networks is to find physically correct coupling conditions. A first condition is usually conservation of mass at the junction
\begin{equation}\label{eq:ConservationofMassIntroduction}
\sum_{k=1}^d A_k \bar\rho_k\bar u_k=0,\quad\text{for a.e. }t>0,
\end{equation}
where $A_k>0$ denotes the cross-sectional area of the $k$-th pipeline. To ensure uniqueness of solutions, we impose more conditions at the junction. The number of additional conditions depends on the sign of the characteristic speeds at the junction. Several coupling conditions have been proposed, for example equality of pressure \cite{BHK2006II,BHK2006}
\begin{equation}\label{eq:defEqualPressure}
\kappa \bar\rho_k^\gamma=\mathcal H_p(t),\quad  \text{for }k=1,\dots, d,\quad\text{a.e. }t>0,
\end{equation}
equality of momentum flux \cite{CoGa2006, CoGa2008}
\begin{equation}
\bar\rho_k\bar u_k^2+\kappa \bar\rho_k^\gamma=\mathcal H_{\mathrm{MF}}(t),\quad \text{for }k=1,\dots, d,\quad\text{a.e. }t>0,
\end{equation}
and equality of stagnation enthalpy/Bernoulli invariant \cite{RFHY2015}
\begin{equation}\label{eq:defEqualStagnationEnthalpy}
\frac{\bar u_k^2}{2}+\frac{\kappa}{\gamma-1} \bar\rho_k^{\gamma-1}=\mathcal H_{\mathrm{SE}}(t),\quad  \text{for }k=1,\dots, d,\quad\text{a.e. }t>0.
\end{equation}
Notice that the first two conditions are non-physically in the sense that energy may be produced at the junction \cite{RFHY2015}. Equality of stagnation enthalpy implies conservation of energy at the junction. We derive a coupling condition with dissipated energy at the junction. This is consistent with the isentropic gas equations where energy is also dissipated. The special situation with only two pipelines were studied in \cite{HoRi1999,CoHo2016}\medskip\\
Usually existence and uniqueness of solutions to a generalization of the Riemann problem at the junction are studied locally in state space first. Reigstad \cite{Re2015} introduced a method to study existence and uniqueness almost globally in the subsonic region under a technical assumption. The results for the Riemann problem are used, to construct approximate solutions to the generalized Cauchy problem, usually by wave front tracking. See the book by Bressan \cite{Br2000} for a general introduction to the wave front tracking method. Colombo, Herty and Sachers \cite{CHS2008} proved a general existence and uniqueness theorem for the generalized Cauchy problem by using this method. This theorem requires a transversality condition, subsonic data and sufficiently small total variation of the initial data. \\
Another approach to prove existence of solutions is the method of compensated compactness. This has been applied to a scalar traffic model \cite{CoGa2010} and the isentropic gas equations \cite{Ho2020}. The method requires less assumptions on the regularity of the initial data, but is restricted to systems with a large class of entropies. Moreover, less regularity of the solutions is obtained and the traces at the junction have to be considered carefully, see e.g. \cite{BeBo2002Boundary,Ho2020}.

\subsection{A new coupling condition}
To supplement conservation of mass (\ref{eq:ConservationofMassIntroduction}), we use an approach based on the kinetic model for isentropic gas and a maximum energy/entropy dissipation principle at the junction. \medskip\\
A kinetic model for the isentropic gas equations was introduced by Lions, Perthame and Tadmor \cite{LPT1994}. The corresponding vector-valued BGK model were introduced by Bouchut \cite{Bo1999}. For $f=f(t,x,\xi)$ we impose
\begin{equation}\label{eq:KineticModel}
\partial_t f^k_\epsilon+\xi\partial_x f^k_\epsilon=\frac{M[f^k_\epsilon]-f^k_\epsilon}{\epsilon},\quad \text{for a.e. }t>0,x>0,\xi\in\mathbb R,k=1,\dots,d,
\end{equation}
with Maxwellian $M[f]$ (will be defined later). The half-space solutions are coupled at the junction by a kinetic coupling condition
\begin{equation}
\Psi^k[f_\epsilon^1(t,0,\cdot),\dots,f_\epsilon^d(t,0,\cdot)](\xi)=f_\epsilon^k(t,0,\xi),\quad \text{for a.e. }t>0,\xi>0.
\end{equation}
To select the function $\Psi$, we follow an idea of Dafermos \cite{Da1973} and use maximum entropy/energy dissipation as a selection criteria for the physically correct kinetic coupling condition. More precisely, we determine $\Psi^k$ such that as much energy is dissipated as possible under the condition of conserved mass. We obtain that for every pipeline the outgoing data is given by a Maxwellian with an artificial density $\rho_*^\epsilon$ and zero speed, i.e.
\begin{equation}
\Psi^k[f_\epsilon^1(t,0,\cdot),\dots,f_\epsilon^d(t,0,\cdot)](\xi)=M(\rho_*^\epsilon(t),0,\xi),\quad \text{for a.e. }t>0,\xi>0.
\end{equation}
A formal limit argument leads to the definition of a generalized Riemann problem for $(\rho_k,u_k)$. As in \cite{HeRa2006}, each half-space solution is given by the restriction of the solution to a standard Riemann problem. The left Riemann initial state is given again by an artificial state with a suitable density and zero speed.
\begin{definition}\label{def:GeneralizedRiemann}
Let $(\hat\rho_k,\hat u_k)\in D$, $k=1,\dots,d$. Then, we call $(\rho_k,u_k)\colon (0,\infty)_t\times(0,\infty)_x\to D$ a weak solution to the generalized Riemann problem if the following assertions hold true:
\begin{itemize}
		\item [RP0:] The solution satisfies the constant initial condition
\begin{equation*}
(\rho_k,u_k)(0+,x)=(\hat{\rho}_k,\hat{u}_k)\in D,\quad \text{for all }x>0,\,k=1,\dots,d.
\end{equation*}
    \item [RP1:] There exists $\rho_*\ge 0$ such that $(\rho_k,u_k)$ is equal to the restriction to $x>0$ of the weak entropy solution in the sense of Lax with initial condition
			\begin{equation*}
				(\rho_k,u_k)(0+,x)=
				\begin{cases}
					(\hat{\rho}_k,\hat{u}_k),\quad& x>0,\\
					(\rho_*,0),\quad& x<0,
				\end{cases}
			\end{equation*}
			for all $k=1,\dots,d$.
    \item [RP2:] Mass is conserved at the junction
			\begin{equation*}
				\sum_{k=1}^d A_k\bar{\rho}_k\bar{u}_k=0,
			\end{equation*}
			where $(\bar{\rho}_k,\bar{u}_k)=(\rho_k,u_k)(t,0+)\in D,$ for a.e. $t>0$, $k=1,\dots,d$.
\end{itemize}
\end{definition}

The set $D$ is the state space and will be defined in Section \ref{sec:IsentropicGas}. Notice that the condition \textit{RP1} can be reformulated using a Riemann problem formulation for boundary conditions $\mathcal V(\rho_*,0)$. This formulation was used by Dubois and LeFloch \cite{DuLe1988} and will be defined later. It is illustrated in Figure \ref{fig:BoundaryRiemann}. The set $\mathcal V(\rho_*,0)$ will be used to define solutions to the generalized Cauchy problem (see Definition \ref{def:GeneralizedCauchyProblem}).\medskip\\
Since the new coupling condition is based on restrictions of standard Riemann problems, we get a simple wave structure for the solutions in the sense of Definition \ref{def:GeneralizedRiemann}. This structure allows us to prove existence and uniqueness of solutions globally in state space. We can use techniques by Reigstad \cite{Re2015} in the subsonic regime and extend them to the full state space. A general local existence and uniqueness result \cite{CHS2008} for the Cauchy problem applies to the new condition. As a by-product we obtain Lipschitz continuous dependence on the initial data.\medskip\\
The coupling condition satisfies several properties regarding the energy/entropy dissipation. First, we obtain that entropy is non-increasing at the junction for a large class of symmetric entropies and in particular for the physical energy. A corollary of this property is a maximum principle on the Riemann invariants. More precisely, if the Riemann invariants of the initial data are bounded, then the Riemann invariants of the solution are bounded for all times. Furthermore, we prove a relation between the traces of the stagnation enthalpy at the junction.\medskip\\
We present an example in which the new coupling condition is the only condition leading to the physically correct wave types. The solutions to the generalized Riemann problem are computed numerically by using Newton's method. Furthermore, we study level sets associated to different coupling conditions and consider their geometry.\medskip\\
Our approach can be easily generalized to other hyperbolic systems. We extend it to full gas dynamics and obtain a similar coupling condition with an artificial density, zero speed and an artificial temperature at the junction. For more details and a brief literature overview see Section \ref{sec:FullGasDynamics}.

\subsection{Organization of the paper}
In Section \ref{sec:IsentropicGas}, we recall several properties of the isentropic gas equations and the initial boundary value problem. In Section \ref{sec:MotivationandDerivation}, we give a detailed motivation and a formal derivation of the coupling condition. Existence and uniqueness of solutions to the generalized Riemann problem will be proven in Section \ref{sec:RiemannProblem} and the corresponding results for the generalized Cauchy problem will be proven in Section \ref{sec:CauchyProblem}. In Section \ref{sec:PropertiesCoupling}, we derive several physical properties of the coupling condition, e.g. non-increasing energy, a maximum principle on the Riemann invariants and a relation for the traces of the stagnation enthalpy. Section \ref{sec:NumericalResults} is devoted to numerical considerations. In Section \ref{sec:FullGasDynamics}, the extension of our approach to full gas dynamics is given. In Section \ref{sec:OutlookRemarks}, we finish with a conclusion.

\section{The isentropic gas equations and basic definitions}
\label{sec:IsentropicGas}
\subsection{Entropy solutions, Riemann invariants and Lax curves}
\label{subsec:EntropyRiemannLax}
The isentropic gas equations in one space dimension are given by
\begin{equation}
\begin{cases}
	\partial_t \rho+\partial_x(\rho u)&=0\\
	\partial_t (\rho u)+\partial_x(\rho u^2+\kappa \rho^\gamma)&=0
\end{cases}\quad \text{a.e. }t\ge 0, x\in \mathbb R.
\end{equation}
Furthermore, we impose the entropy condition 
\begin{equation}
\partial_t\eta_S(\rho,u)+\partial_x G_S(\rho,u)\le0\quad\text{a.e. }t, x,
\end{equation}
for all (weak) entropy pairs $(\eta_S,G_S)$, where $\eta_S$ is a convex function with
\begin{equation}
G_S'=\eta_S' F',\quad \eta_S(\rho=0,u)=0,\quad \eta_{S,\rho}'(\rho=0,u)=S(u),\quad\text{ for all }u\in\mathbb R,
\end{equation}
and a suitable convex function $S\colon\mathbb R\to\mathbb R$. The involved derivatives are taken with respect to the conserved quantities $(\rho,\rho u)$. We recall some basic definitions and notation:
\begin{gather}
D=\{(x_0, x_1)\in \mathbb R^2|x_0>0 \text{ or }x_0=x_1=0\},\\
\chi(\rho,\xi)=c_{\gamma,\kappa}(a_\gamma^2 \rho^{\gamma-1}-\xi^2)_+^\lambda,\\
\theta=\frac{\gamma-1}{2},\quad\lambda=\frac{1}{\gamma-1}-\frac{1}{2},\quad c_{\gamma,\kappa}=\frac{a_\gamma^{2/(\gamma-1)}}{J_\lambda},\\
J_\lambda=\int_{-1}^1 (1-z^2)^\lambda\,\mathrm{d}z,\quad a_\gamma=\frac{2\sqrt{\gamma \kappa}}{\gamma-1}.
\end{gather}
The isentropic gas equations admit the Riemann invariants 
\begin{align}
\omega_1&=u-a_\gamma\rho^\theta,\quad\omega_2=u+a_\gamma\rho^\theta,
\end{align}
for $(\rho,u)\in D$. The eigenvalues are given by
\begin{align}
\lambda_1(\rho,u)=u-\sqrt{\kappa\gamma}\rho^\theta,\quad\lambda_2(\rho,u)=u+\sqrt{\kappa\gamma}\rho^\theta,
\end{align}
and the eigenvectors by
\begin{align}
r_1(\rho,u)=\begin{pmatrix}1 \\u-\sqrt{\kappa\gamma}\rho^\theta \end{pmatrix},\quad r_2(\rho,u)=\begin{pmatrix}1 \\u+\sqrt{\kappa\gamma}\rho^\theta \end{pmatrix},
\end{align}
for $(\rho,u)\in D$. We call a state $(\rho,u)\in D$
\begin{itemize}
\item \textit{subsonic} if $\lambda_1(\rho,u)<0<\lambda_2(\rho,u)$;
\item \textit{sonic} if $\lambda_1(\rho,u)=0$ or $\lambda_2(\rho,u)=0$;
\item \textit{supersonic} if $0<\lambda_1(\rho,u)<\lambda_2(\rho,u)$ or $\lambda_1(\rho,u)<\lambda_2(\rho,u)<0$.
\end{itemize}
Next, we define several quantities corresponding to the kinetic (BGK) model for isentropic gas dynamics (\ref{eq:KineticModel}).
The vector-valued Maxwellian $M[f]$ for $f\colon \mathbb R\to D$ is defined by
\begin{equation}
M[f](\xi)=M(\rho_f,u_f,\xi)
\end{equation} 
where
\begin{equation}
\rho_f=\int_{\mathbb R}f_0(\xi)\,\mathrm{d}\xi,\quad\rho_f u_f=\int_{\mathbb R}f_1(\xi)\,\mathrm{d}\xi
\end{equation}
and 
\begin{gather}
M(\rho,u,\xi)=(\chi(\rho,\xi-u),((1-\theta)u+\theta \xi)\chi(\rho,\xi-u)).
\end{gather}
The kinetic entropies are defined by 
\begin{align}
H_S(f,\xi)&=\int_\mathbb R \Phi(\rho(f,\xi),u(f,\xi),\xi,v)S(v)\;\text{d}v\quad \text{for }f\in D\backslash \{ 0\},\\
H_S(0,\xi)&=0,
\end{align}
where
\begin{align}
u(f,\xi)&=\frac{f_1/f_0-\theta \xi}{1-\theta},\\
\rho(f,\xi)&=a_\gamma^{-\tfrac{2}{\gamma-1}}\left(\left(\frac{f_1/f_0-\xi}{1-\theta}\right)^2+\left(\frac{f_0}{c_{\gamma,\kappa}}\right)^{1/\lambda}\right)^{\tfrac{1}{\gamma-1}}.
\end{align}
The kernel $\Phi$ is defined by
\begin{gather}
\Phi(\rho,u,\xi,v)=\frac{(1-\theta)^2}{\theta}\frac{c_{\gamma,\kappa}}{J_\lambda}\mathbbm{1}_{\omega_1<\xi<\omega_2}\mathbbm{1}_{\omega_1<v<\omega_2}|\xi-v|^{2\lambda-1}\Upsilon_{\lambda-1}(z),\\
z=\frac{(\xi+v)(\omega_1+\omega_2)-2(\omega_1\omega_2+\xi v)}{(\omega_2-\omega_1)|\xi-v|},\\
\Upsilon_{\lambda-1}(z)=\int_1^z (y^2-1)^{\lambda -1}\;\text{d}y,\quad z\ge 1.
\end{gather}
The kinetic Riemann invariants are given by
\begin{align}
\omega_1&=u(f,\xi)-a_\gamma\rho(f,\xi)^\theta,\quad\omega_2=u(f,\xi)+a_\gamma\rho(f,\xi)^\theta,
\end{align}
for $f\in D\backslash \{ 0\}$. The macroscopic entropy and entropy flux are given by
\begin{align}
\eta_S(\rho,u)&=\int_{\mathbb R}\chi(\rho,v-u) S(v)\;\text{d}v=\int_{\mathbb R} H_S(M(\rho,u,\xi),\xi)\;\text{d}\xi,\\
G_S(\rho,u)&=\int_\mathbb R [(1-\theta)u+\theta v]\ \chi(\rho,v-u)S(v)\;\text{d}v\\
&=\int_\mathbb R\xi H_S(M(\rho,u,\xi),\xi)\;\text{d}\xi,
\end{align}
for $(\rho,u)\in D$. If additionally $S\in C^1(\mathbb R,\mathbb R)$, then the gradient of $\eta$ with respect to the conserved variables is given by
\begin{equation}
\eta_S'(\rho,u)=\frac{1}{J_\lambda}\int_{-1}^1 (1-z^2)^\lambda \begin{pmatrix} S(u+a_\gamma\rho^\theta z)+(\theta a_\gamma\rho^\theta z- u)S'(u+a_\gamma\rho^\theta z)\\ S'(u+a_\gamma\rho^\theta z)\end{pmatrix}\mathrm{d}z,
\end{equation}
for $(\rho,u)\in D$. The kinetic entropy parametrized by $S(v)=v^2/2$ is given by
\begin{equation}\label{eq:defkineticenergy}
H(f,\xi)=\frac{\theta}{1-\theta}\frac{\xi^2}{2} f_0+\frac{\theta}{2c_{\gamma,\kappa}^{1/\lambda}}\frac{f_0^{1+1/\lambda}}{1+1/\lambda}
+\frac{1}{1-\theta}\frac{1}{2}\frac{f_1^2}{f_0}-\frac{\theta}{1-\theta}\xi f_1,
\end{equation}
and the corresponding macroscopic entropy pair is given by the physical energy and energy flux
\begin{align}
\eta(\rho,u)=\frac{\rho u^2}{2}+\frac{\kappa}{\gamma-1}\rho^\gamma,\quad G(\rho,u)=\frac{\rho u^3}{2}+\frac{\gamma\kappa}{\gamma-1}\rho^\gamma u.
\end{align}
To construct solutions to the generalized Riemann problem, we need the (forward) Lax wave curves $\mathcal W_1(\rho_0,u_0)$ and $\mathcal W_2(\rho_0,u_0)$ which are the composition of the corresponding rarefaction and shock curves. The rarefaction curves are given by
\begin{align}
\mathcal R_1(\rho_0,u_0):\quad u=u_0+a_\gamma\rho_0^\theta-a_\gamma\rho^\theta,\quad \text{for }\rho<\rho_0,\\
\mathcal R_2(\rho_0,u_0):\quad u=u_0-a_\gamma\rho_0^\theta+a_\gamma\rho^\theta,\quad \text{for }\rho>\rho_0,
\end{align}
and the shock curves are given by
\begin{align}
\mathcal S_1(\rho_0,u_0):\quad u=u_0-\sqrt{\frac{\kappa(\rho^\gamma-\rho_0^\gamma)(\rho-\rho_0)}{\rho\rho_0}},\quad \text{for }\rho>\rho_0,\\
\mathcal S_2(\rho_0,u_0):\quad u=u_0-\sqrt{\frac{\kappa(\rho^\gamma-\rho_0^\gamma)(\rho-\rho_0)}{\rho\rho_0}},\quad \text{for }\rho<\rho_0.
\end{align}
We will use the notation $\mathcal S_2^-(\rho,u), \mathcal R_2^-(\rho,u),\dots$ for the reversed wave curves. They satisfy the same condition as the forward wave curves but the fixed variables are $(\rho,u)$ instead of $(\rho_0,u_0)$.
We always consider the self-similar solutions to Riemann problems in the sense of Lax and denote them by $\mathcal{RP}(\rho_l,u_l,\rho_r,u_r)(t/x)$ for initial data
\begin{equation}
\begin{cases}
(\rho_l,u_l),\quad &x<0,\\
(\rho_r,u_r),\quad &x>0,
\end{cases}
\end{equation} 
where $(\rho_l,u_l),(\rho_r,u_r)\in D$.

\subsection{The initial boundary value problem}
In this subsection, we recall some basic properties of the initial boundary value problem.
The sets $\mathcal E(\rho_b,u_b)$ and $\mathcal V(\rho_b,u_b)$ of admissible boundary values were introduced in \cite{DuLe1988}. We recall their definitions.
\begin{definition}
Let $(\rho_b,u_b)\in D$. $\mathcal V(\rho_b,u_b)$ is the set of states $(\rho,u)\in D$ with
\begin{equation*}
(\rho,u)=\mathcal{RP}(\rho_b,u_b,\rho_r,u_r)(0+),\quad\text{for a state }(\rho_r,u_r)\in D.
\end{equation*}
\end{definition}
\begin{definition}
Let $(\rho_b,u_b)\in D$. $\mathcal E(\rho_b,u_b)$ is the set of states $(\rho,u)\in D$ with
\begin{equation*}
G_S(\rho,u)-G_S(\rho_b,u_b)-\eta_S'(\rho_b,u_b)\left(F(\rho,u)-F(\rho_b,u_b)\right)\le 0
\end{equation*}
for all entropy pairs $(\eta_S,G_S)$ of class $C^1$ (i.e. $S\in C^1(\mathbb R,\mathbb R$)).
\end{definition}
We recall the following result.
\begin{proposition}[{\cite[Theorem 3.4]{KSX1997}}]
Let $(\rho_b,u_b)\in D$, then $\mathcal V(\rho_b,u_b)\subset \mathcal E(\rho_b,u_b)$. The reversed set inclusion does not hold true in general.
\end{proposition}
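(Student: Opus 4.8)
The plan is to derive the inclusion $\mathcal V(\rho_b,u_b)\subset\mathcal E(\rho_b,u_b)$ from an integrated entropy balance for the self-similar Riemann solution over a space--time triangle with apex at the origin, and to obtain the strictness of the inclusion from an explicit state of $\mathcal E(\rho_b,u_b)$ that cannot be a boundary trace.

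For the inclusion, fix $(\rho,u)\in\mathcal V(\rho_b,u_b)$, so that $(\rho,u)=\mathcal{RP}(\rho_b,u_b,\rho_r,u_r)(0+)$ for some $(\rho_r,u_r)\in D$; write $w=w(x/t)$ for this self-similar entropy solution and let $U_b,U\in D$ denote the conserved states corresponding to $(\rho_b,u_b)$ and $(\rho,u)$. The guiding observation is that the expression in the definition of $\mathcal E$ is itself an entropy flux: for a $C^1$ weak entropy pair $(\eta_S,G_S)$ set
\[
\widetilde\eta_S(V)=\eta_S(V)-\eta_S(U_b)-\eta_S'(U_b)(V-U_b),\qquad
\widetilde G_S(V)=G_S(V)-G_S(U_b)-\eta_S'(U_b)\bigl(F(V)-F(U_b)\bigr).
\]
Subtracting an affine function of the conserved variables from $\eta_S$, together with the corresponding affine-in-$F$ correction of the flux, preserves the entropy/entropy-flux compatibility relation and leaves the entropy production on weak solutions unchanged (the subtracted affine entropy pair has vanishing production on $w$ because $w$ solves the system in $\mathcal D'$); moreover $\widetilde\eta_S$ has the same Hessian as $\eta_S$, hence is convex on $D$, and vanishes together with its gradient at $U_b$, so $\widetilde\eta_S\ge 0$ on $D$. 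Writing $\widetilde\eta_S=\eta_{\widetilde S}$ with $\widetilde S$ obtained from $S$ by subtracting an affine function of $v$, one sees that $(\widetilde\eta_S,\widetilde G_S)$ again belongs to the admissible class of weak entropy pairs, so the Lax solution satisfies $\partial_t\widetilde\eta_S(w)+\partial_x\widetilde G_S(w)\le 0$.

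Next I would integrate this entropy inequality over $T_{\Lambda,\varepsilon}=\{(x,t):0<t<1,\ -\Lambda t<x<\varepsilon t\}$, with $\Lambda>0$ larger than the moduli of all wave speeds in the Riemann fan and $\varepsilon>0$ small. Along the left edge $w\equiv U_b$, so $\widetilde\eta_S(w)=\widetilde G_S(w)=0$ there; along the right edge $w$ is the constant state $w(\varepsilon+)$, which tends to $(\rho,u)$ as $\varepsilon\downarrow0$; the apex at the origin contributes nothing. The Gauss--Green formula then gives
\[
\int_{-\Lambda}^{\varepsilon}\widetilde\eta_S\bigl(w(\sigma)\bigr)\,\mathrm{d}\sigma+\widetilde G_S\bigl(w(\varepsilon+)\bigr)-\varepsilon\,\widetilde\eta_S\bigl(w(\varepsilon+)\bigr)\le 0,
\]
and passing to the limit $\varepsilon\downarrow0$ and using $\widetilde\eta_S\ge0$ yields $\widetilde G_S(U)\le-\int_{-\Lambda}^{0}\widetilde\eta_S(w(\sigma))\,\mathrm{d}\sigma\le 0$, which is exactly the defining inequality of $\mathcal E(\rho_b,u_b)$. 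As $S\in C^1$ was arbitrary, $(\rho,u)\in\mathcal E(\rho_b,u_b)$.

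For the failure of the reverse inclusion I would produce a point of $\mathcal E(\rho_b,u_b)\setminus\mathcal V(\rho_b,u_b)$: by the Dubois--LeFloch analysis $\mathcal V(\rho_b,u_b)$ has an explicit description in terms of entropy-admissible wave fans of nonpositive speed issuing from $(\rho_b,u_b)$, whereas $\mathcal E(\rho_b,u_b)$ is cut out by the scalar family of entropy inequalities only and is strictly larger; a concrete state having the correct sign for every entropy-flux difference but an incompatible wave-fan structure is given in \cite{DuLe1988}. The step I expect to need the most care is the bookkeeping at $\sigma=0$: the trace is the $0+$ limit and the Riemann fan may contain a standing shock of speed zero, which is precisely why the argument uses the enlarged triangle $\{x<\varepsilon t\}$ and then lets $\varepsilon\downarrow0$, so that such a shock is absorbed into the distributional entropy inequality and produces no additional boundary term; a minor secondary point is to confirm that $(\widetilde\eta_S,\widetilde G_S)$ lies in the class of weak entropies defining $\mathcal E$ rather than being merely convex, which is what the affine-in-$v$ correction to $S$ guarantees.
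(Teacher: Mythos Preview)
The paper does not give its own proof of this proposition: it is stated as a recall of \cite[Theorem~3.4]{KSX1997} and left unproved. So there is nothing in the paper to compare against, and your proposal should be judged on its own merits.

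Your argument for the inclusion $\mathcal V(\rho_b,u_b)\subset\mathcal E(\rho_b,u_b)$ is the standard relative-entropy computation and is essentially correct: forming the relative pair $(\widetilde\eta_S,\widetilde G_S)$, integrating the entropy inequality for the self-similar Riemann solution over the triangle $T_{\Lambda,\varepsilon}$, using that the left edge contributes nothing, and sending $\varepsilon\downarrow 0$ yields $\widetilde G_S(\rho,u)\le -\int_{-\Lambda}^0\widetilde\eta_S(w(\sigma))\,\mathrm d\sigma\le 0$, which is the defining inequality of $\mathcal E$. One small inaccuracy: the identity $\widetilde\eta_S=\eta_{\widetilde S}$ for an affine shift $\widetilde S$ of $S$ is not literally true, because $\widetilde\eta_S$ carries a nonzero constant term (indeed $\widetilde\eta_S(0)=-\eta_S(U_b)+\eta_S'(U_b)U_b\neq 0$ in general), so $\widetilde\eta_S$ is not a weak entropy in the sense of Section~\ref{subsec:EntropyRiemannLax}. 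This does not damage your proof, since all you actually use is that $(\widetilde\eta_S,\widetilde G_S)$ still satisfies the distributional entropy inequality (true by linearity, as you say) and that $\widetilde\eta_S\ge 0$ (true by convexity and the normalization at $U_b$); just drop the claim that $\widetilde\eta_S$ is itself of the form $\eta_{\widetilde S}$.

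For the failure of the reverse inclusion you only gesture at an example and attribute it to \cite{DuLe1988}; the concrete counterexamples are in \cite{KSX1997}, which is the reference the paper cites. Since the proposition is a citation anyway, deferring to the literature here is acceptable, but if you want a self-contained statement you should exhibit an explicit state satisfying all the $\mathcal E$-inequalities yet lying outside the boundary-Riemann set described in Lemma~\ref{lemma:ExistenceFixedRho}.
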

Next, we define subsets of $D$ to consider different situations in the initial boundary value problem. We are especially interested in the case $(\rho_b,u_b)=(\rho_*,0)$.
\begin{definition}\label{def:SetsBoundaryValueProb}
Let $(\rho_b,u_b)\in D$. Then,
\begin{itemize}
\item $\mathcal A$ is the set of states which are connected to $\mathcal W_1(\rho_b,u_b)\cap\{\lambda_1\ge 0\}$ by its reversed 2-wave curve;
\item $\mathcal B$ is the set of states which are connected to $\mathcal W_1(\rho_b,u_b)\cap\{\lambda_1<0<\lambda_2\}$ by its reversed 2-wave curve with positive wave speed;
\item $\mathcal C$ is the set of states which are connected to $\mathcal W_1(\rho_b,u_b)\cap\{\lambda_2\le 0\}$ by its reversed 2-wave curve or are connected to $\mathcal W_1(\rho_b,u_b)\cap\{\lambda_1<0<\lambda_2\}$ by its reversed 2-wave curve with non-positive wave speed;
\item $\mathcal J$ is the set of states which are connected to $\mathcal W_1(\rho_b,u_b)\cap\{\lambda_1<0<\lambda_2\}$ by its reversed 2-wave curve with zero wave speed.
\end{itemize}
We write  $\mathcal A_*,\, \mathcal B_*,\, \mathcal C_*,\,\mathcal J_*$ if $(\rho_b,u_b)=(\tilde\rho_*,0)$ with $\tilde\rho_*\ge 0$. 
\end{definition}

The sets $\mathcal A_*,\, \mathcal B_*,\, \mathcal C_*,\,\mathcal J_*$ are shown in Figure \ref{fig:ABCJ}. The sets $\mathcal A_*$ and $\mathcal B_*$ are separated by the 2-wave curve $\mathcal W_2(\rho_\alpha,u_\alpha)$, where $(\rho_\alpha,u_\alpha)$ is the unique state in $\{\lambda_1=0\}\cap \mathcal R_1(\tilde\rho_*,0)$. The sets $\mathcal B_*$ and $\mathcal C_*$ are separated by $\mathcal J_*$ and $\mathcal R_2(\rho_\beta,u_\beta)$, where $(\rho_\beta,u_\beta)$ is the unique state in $\{\lambda_2=0\}\cap \mathcal S_1(\tilde\rho_*,0)$. For the construction of $(\rho_\alpha,u_\alpha)$ and $(\rho_\beta,u_\beta)$, we refer to Figure \ref{fig:rhou12Construction}.

Next, we construct a solution which satisfies all properties in Definition \ref{def:GeneralizedRiemann} except of conservation of mass \textit{RP2}. They will be used to construct the desired solution to Definition \ref{def:GeneralizedRiemann} later. To clarify that \textit{RP2} does not necessarily hold true, we denote the artificial density by $\tilde\rho_*$.

\begin{figure}
\centering
\includegraphics[trim=1cm 2cm 1cm 2cm,clip=true,width=0.8\textwidth]{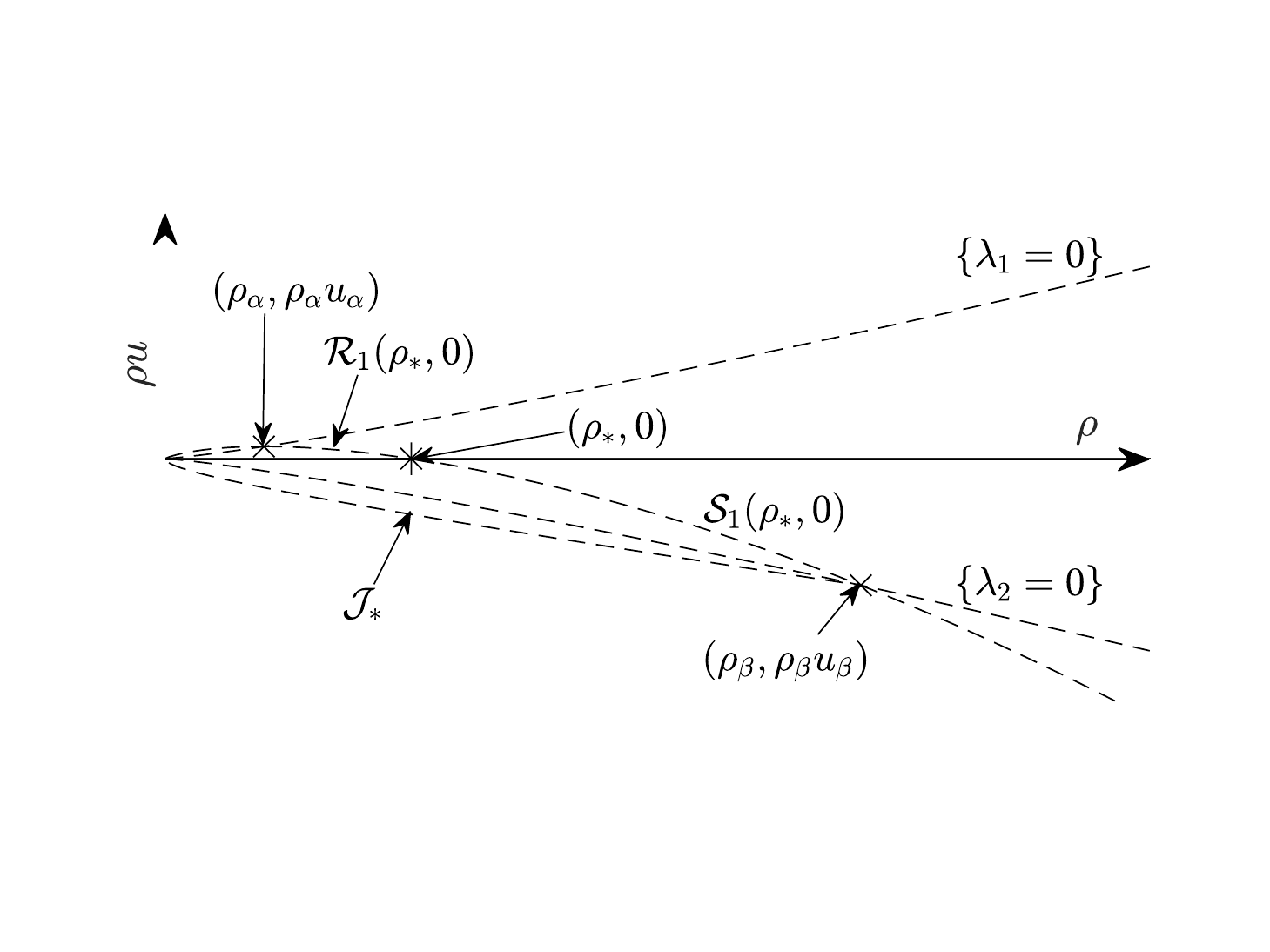}
\caption{Construction of $(\rho_\alpha,u_\alpha)$ and $(\rho_\beta, u_\beta)$}
\label{fig:rhou12Construction}
\end{figure}

\begin{figure}
\centering
\includegraphics[trim=1cm 2cm 1cm 2cm,clip=true,width=0.8\textwidth]{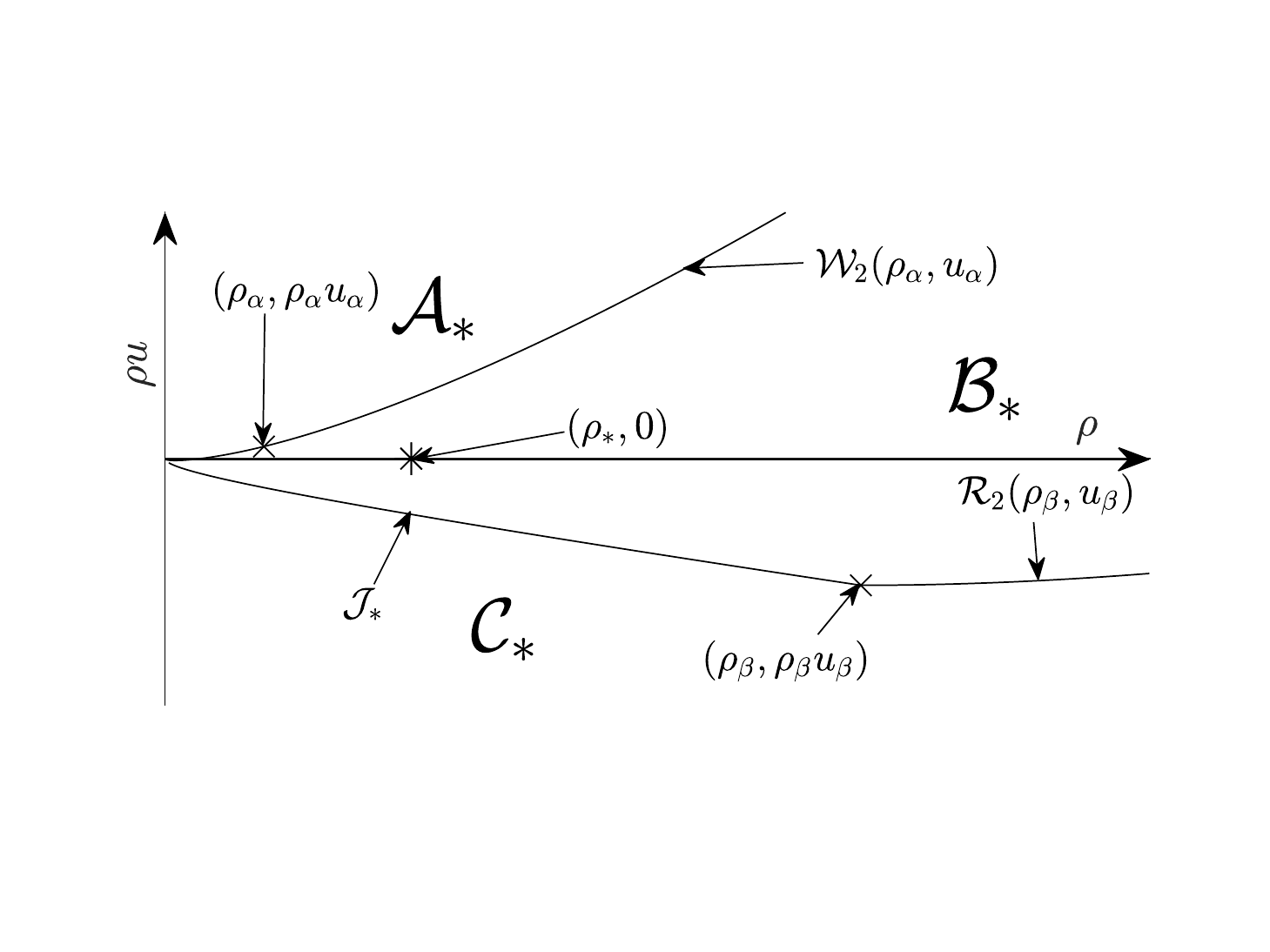}
\caption{The sets $\mathcal A_*$, $\mathcal B_*$, $\mathcal C_*$ and $\mathcal J_*$}
\label{fig:ABCJ}
\end{figure}

\begin{lemma}\label{lemma:ExistenceFixedRho}
Let $(\hat \rho_k,\hat u_k)\in D$ and $\tilde\rho_*\ge 0$. Then, there exists a unique function $(\rho_k,u_k)\colon(0,\infty)_t\times(0,\infty)_x\to D$ which coincides with the self-similar Lax solution to the standard Riemann problem with initial condition 
\begin{equation*}
(\rho_k,u_k)(0,x)=\begin{cases}
(\hat\rho_k,\hat u_k),\quad &x>0,\\
(\tilde\rho_*,0),\quad &x<0,
\end{cases}
\end{equation*}
for a.e. $t>0$, $x>0$.
Furthermore, we have the following properties for the trace $(\bar\rho_k,\bar u_k)=(\rho_k,u_k)(t,0+)$ illustrated in Figure \ref{fig:BoundaryRiemann}:
\begin{enumerate}[label=(\roman*)]
\item $(\hat\rho_k,\hat u_k)= (\bar\rho_k,\bar u_k)$ if and only if $(\hat\rho_k,\hat u_k)\in \mathcal{V}(\tilde\rho_*,0)$.
\item $(\bar\rho_k,\bar u_k)$ cannot be supersonic with $\lambda_1(\bar\rho_k,\bar u_k)> 0$.
\item $(\bar\rho_k,\bar u_k)$ is sonic with $\lambda_1(\bar\rho_k,\bar u_k)=0$ if and only if $(\hat\rho_k,\hat u_k)\in\mathcal A$. Furthermore, $(\bar\rho_k,\bar u_k)$ is the unique element in $\{\lambda_1=0\}\cap \mathcal R_1(\tilde\rho_*,0)$.
\item $(\bar\rho_k,\bar u_k)$ is subsonic if and only if $(\hat\rho_k,\hat u_k)\in\mathcal B$.
\item $(\bar\rho_k,\bar u_k)$ is sonic with $\lambda_2(\bar\rho_k,\bar u_k)=0$ if and only if $(\bar\rho_k,\bar u_k)$ is connected to $(\hat\rho_k,\hat u_k)$ by a 2-rarefaction curve and $(\hat\rho_k,\hat u_k)\in\mathcal C\backslash \{\lambda_2\ge 0\}$.
\item $(\bar\rho_k,\bar u_k)$ is supersonic with $\lambda_2(\bar\rho_k,\bar u_k)< 0$ if and only if $(\hat\rho_k,\hat u_k)\in\mathcal C\cap\{\lambda_2<0\}$.
\end{enumerate}
\end{lemma}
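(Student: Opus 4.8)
The plan is to reduce every assertion to a direct inspection of the self-similar Lax solution $V=V(\xi)$, $\xi=x/t$, of the standard Riemann problem with left state $(\tilde\rho_*,0)$ and right state $(\hat\rho_k,\hat u_k)$, and then to read off $(\bar\rho_k,\bar u_k)=V(0{+})$. Existence and uniqueness of this Lax solution, and hence of $(\rho_k,u_k)$ after restriction to $x>0$, is the classical theory for the $p$-system with $1<\gamma<3$: the forward $1$-wave curve and the reversed $2$-wave curve are monotone graphs over $\rho$ whose images together exhaust $D$ (vacuum included), so $\mathcal W_1(\tilde\rho_*,0)$ and $\mathcal W_2^-(\hat\rho_k,\hat u_k)$ meet in a single intermediate state $(\rho_m,u_m)$ and the wave fan is determined. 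I will assume $\tilde\rho_*>0$; the vacuum case $\tilde\rho_*=0$ is degenerate and follows with the usual vacuum conventions.

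Next I would record the geometry of $\mathcal W_1(\tilde\rho_*,0)$, which is the crux because the left velocity vanishes: $\lambda_1(\tilde\rho_*,0)=-\sqrt{\kappa\gamma}\tilde\rho_*^{\theta}<0$ and $\lambda_2(\tilde\rho_*,0)=\sqrt{\kappa\gamma}\tilde\rho_*^{\theta}>0$. Parametrising by $\rho$: for $\rho<\tilde\rho_*$ the curve is the $1$-rarefaction $\mathcal R_1(\tilde\rho_*,0)$, along which $\lambda_1$ strictly increases from $\lambda_1(\tilde\rho_*,0)<0$ to $+\infty$ and so passes through $0$ at a unique state $(\rho_\alpha,u_\alpha)$, while $\lambda_2>0$ throughout (using $\gamma<3$, i.e.\ $a_\gamma>\sqrt{\kappa\gamma}$); for $\rho>\tilde\rho_*$ it is the $1$-shock $\mathcal S_1(\tilde\rho_*,0)$, along which $\lambda_1<0$ by the Lax condition while $\lambda_2$ strictly decreases from $\sqrt{\kappa\gamma}\tilde\rho_*^{\theta}>0$ to $-\infty$ and so passes through $0$ at a unique state $(\rho_\beta,u_\beta)$. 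Hence, up to the two endpoints $(\rho_\alpha,u_\alpha)$ and $(\rho_\beta,u_\beta)$, the curve $\mathcal W_1(\tilde\rho_*,0)$ is the disjoint union of the arcs $\{\lambda_1\ge 0\}$, $\{\lambda_1<0<\lambda_2\}$, $\{\lambda_2\le 0\}$; this is exactly the trichotomy behind Definition \ref{def:SetsBoundaryValueProb} and the partition into $\mathcal A_*,\mathcal B_*,\mathcal C_*,\mathcal J_*$.

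The core is then a case analysis on the arc containing $(\rho_m,u_m)$. (a) If $(\rho_m,u_m)\in\{\lambda_1\ge 0\}$, i.e.\ $(\hat\rho_k,\hat u_k)\in\mathcal A$, the $1$-rarefaction occupies $[\lambda_1(\tilde\rho_*,0),\lambda_1(\rho_m,u_m)]\ni 0$, so $V(0{+})$ is its state with $\lambda_1=0$, namely $(\rho_\alpha,u_\alpha)$; in the borderline subcase $\rho_m=\rho_\alpha$ the $1$-rarefaction ends at $\xi=0$, but the subsequent $2$-wave has nonnegative speed (a $2$-rarefaction off $(\rho_\alpha,u_\alpha)$ opens at $\lambda_2(\rho_\alpha,u_\alpha)>0$, and a $2$-shock off $(\rho_\alpha,u_\alpha)$ has positive speed, since its downstream density is smaller and, as $u_\alpha>0$, its downstream mass flux is smaller, forcing $s_2>0$ by Rankine--Hugoniot), so still $V(0{+})=(\rho_\alpha,u_\alpha)$. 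This gives (iii), and such a trace is sonic with $\lambda_1=0$. (b) If $(\rho_m,u_m)$ is subsonic the $1$-wave lies in $\xi<0$; by the very definitions of $\mathcal B,\mathcal C,\mathcal J$ the subsequent $2$-wave has positive speed iff $(\hat\rho_k,\hat u_k)\in\mathcal B$, in which case $V(0{+})=(\rho_m,u_m)$ is subsonic — assertion (iv) — and otherwise ($(\hat\rho_k,\hat u_k)\in\mathcal J$, or in the part of $\mathcal C$ reached from a subsonic intermediate state) the $2$-wave lies in $\xi\le 0$ and $V(0{+})=(\hat\rho_k,\hat u_k)$. (c) If $(\rho_m,u_m)\in\{\lambda_2\le 0\}$ the $1$-shock lies in $\xi<0$; a $2$-shock off $(\rho_m,u_m)$ has speed $s_2<\lambda_2(\rho_m,u_m)\le 0$ and a $2$-rarefaction off $(\rho_m,u_m)$ opens at $\lambda_2(\rho_m,u_m)\le 0$, so if $\lambda_2(\hat\rho_k,\hat u_k)<0$ the entire fan is in $\xi<0$ and $V(0{+})=(\hat\rho_k,\hat u_k)$ is supersonic with $\lambda_2<0$ — assertion (vi) — whereas if $\lambda_2(\hat\rho_k,\hat u_k)\ge 0$ the final $2$-rarefaction straddles $\xi=0$ and $V(0{+})$ is its state with $\lambda_2=0$, lying on $\mathcal R_2^-(\hat\rho_k,\hat u_k)$ — assertion (v). Assertion (ii) is then a by-product: in (a)--(c) one always has $\lambda_1(\bar\rho_k,\bar u_k)\le 0$, the trace being a $\lambda_1=0$ sonic state, or a subsonic state, or equal to $(\hat\rho_k,\hat u_k)$ with $\lambda_2\le 0$ (whence $\lambda_1<\lambda_2\le 0$). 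For (i), the implication $(\bar\rho_k,\bar u_k)=(\hat\rho_k,\hat u_k)\Rightarrow(\hat\rho_k,\hat u_k)\in\mathcal V(\tilde\rho_*,0)$ is immediate from $(\bar\rho_k,\bar u_k)=\mathcal{RP}(\tilde\rho_*,0,\hat\rho_k,\hat u_k)(0{+})$, and the converse is the standard property of $\mathcal V$ from \cite{DuLe1988} that the $0{+}$-trace map is the identity on $\mathcal V(\tilde\rho_*,0)$, which one can also read off from (a)--(c): $V(0{+})=(\hat\rho_k,\hat u_k)$ precisely when no wave of the fan has positive speed.

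I expect the main obstacle to be the bookkeeping at the transitions between the three regimes rather than any single estimate: verifying the sign of the $2$-wave speed when $(\rho_m,u_m)$ is sonic or when $\xi=0$ coincides with a shock, confirming that the three arcs of $\mathcal W_1(\tilde\rho_*,0)$ really match $\mathcal A_*,\mathcal B_*,\mathcal C_*,\mathcal J_*$ including their mutual boundaries, and keeping each ``if and only if'' tight. The monotonicities of $\lambda_1,\lambda_2$ and of the mass flux $\rho u$ along the Lax curves — all valid for $1<\gamma<3$ — are precisely what make these boundary cases resolve consistently.
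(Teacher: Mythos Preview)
Your approach is correct and is in fact considerably more detailed than what the paper does. The paper's proof consists entirely of two citations: existence and uniqueness of the Lax solution is declared ``well-known'' (with \cite{LiSm1980} for the vacuum case), and the trace properties (i)--(vi) are said to ``follow from the considerations in \cite{DuLe1988}''. No case analysis is written out. Your argument is precisely the case analysis underlying that second citation: locate the intermediate state $(\rho_m,u_m)$ on $\mathcal W_1(\tilde\rho_*,0)$, split according to which arc of $\mathcal W_1(\tilde\rho_*,0)$ it lies on, and read off $V(0{+})$ from the sign of the $2$-wave speed. So there is no methodological difference; you have simply carried out what the paper leaves to the reader and to Dubois--LeFloch.

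One small inaccuracy that does not affect the argument: along $\mathcal R_1(\tilde\rho_*,0)$, as $\rho\searrow 0$ one has $\lambda_1\to a_\gamma\tilde\rho_*^{\theta}$, not $+\infty$; but since $a_\gamma\tilde\rho_*^{\theta}>0$ the existence of the unique sonic point $(\rho_\alpha,u_\alpha)$ still follows. Everything else in your bookkeeping (signs of $\lambda_1,\lambda_2$ along $\mathcal S_1$, the Lax inequalities forcing $2$-shocks off a subsonic intermediate state to have definite speed sign, and the handling of the borderline cases) is sound.
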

\begin{proof}
The existence and uniqueness of self-similar Lax solutions to Riemann problems is well-known. For Riemann problems with vacuum initial data see \cite{LiSm1980}. It remains to prove the properties $(\bar\rho_k,\bar u_k)$. They follow from the considerations in \cite{DuLe1988}.
\end{proof}

\begin{figure}
\centering
\includegraphics[trim=1cm 2cm 1cm 2cm,clip=true,width=0.8\textwidth]{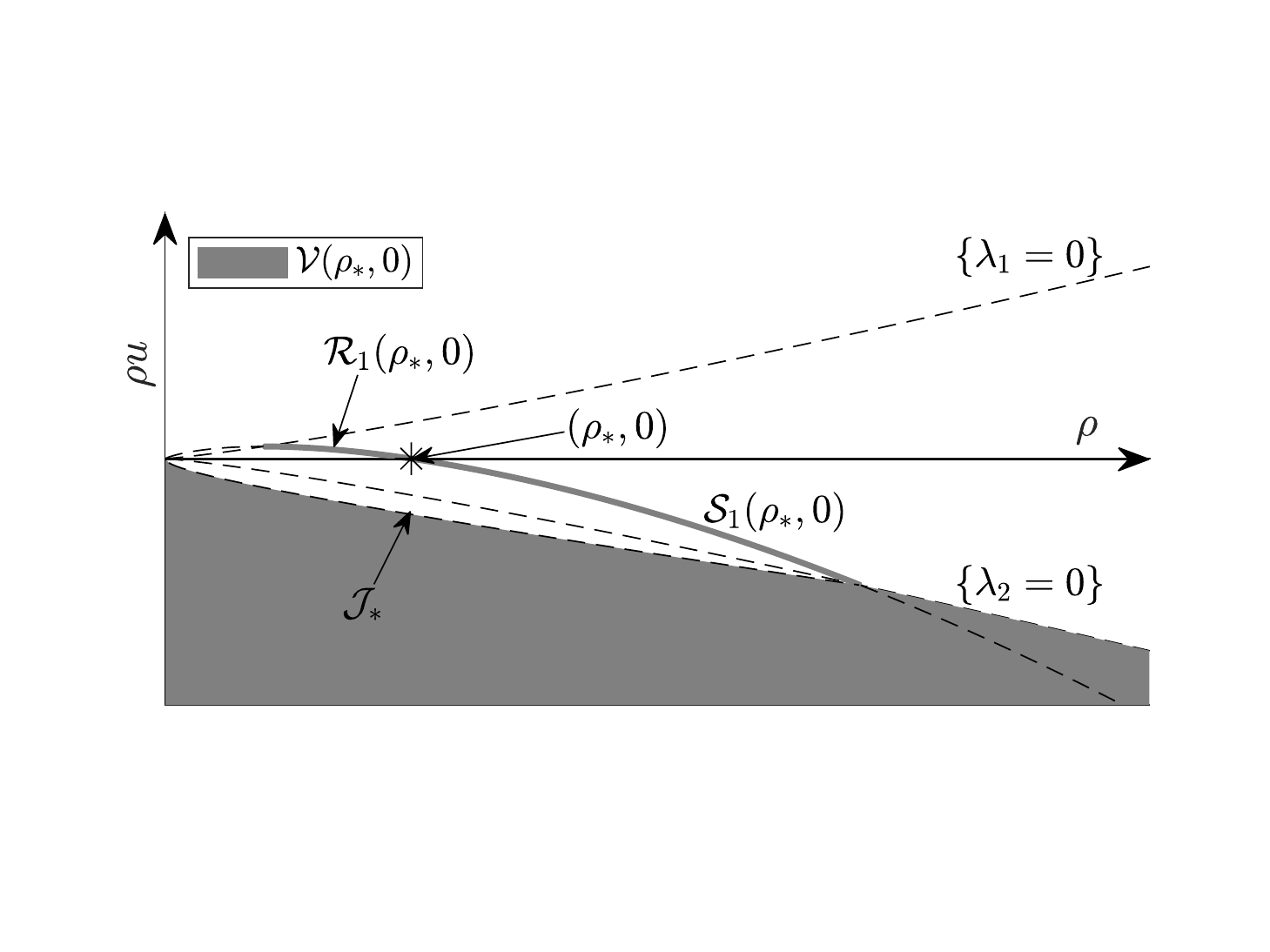}
\caption{The boundary Riemann set $\mathcal V(\rho_*,0)$}\label{fig:BoundaryRiemann}
\end{figure}
%
%
%
%
%

\section{Motivation and formal derivation}
\label{sec:MotivationandDerivation}
In this section we give a physical motivation and formal derivation for the new coupling condition. Both are based on the kinetic model for isentropic gas and a maximum energy/entropy dissipation principle. First, we specify the kinetic coupling condition which conserves mass and dissipates as much energy as possible. In the second step, we consider the macroscopic limit of the kinetic coupling condition. This relaxation works only on a formal level since the currently available results for passing to the limit at the junction are not strong enough. Nevertheless, we are able to take the formal limit towards the macroscopic coupling condition. Finally, we also give an interpretation of the resulting conditions. 

\subsection{A maximum energy dissipation principle applied to kinetic coupling conditions}
Since Dafermos \cite{Da1973} introduced the entropy rate admissibility criterion it is a natural approach to maximize the entropy dissipation in the field of hyperbolic conservation laws. This technique can be used to single out the physically correct solutions. We adapt this approach and aim to find the most dissipative kinetic coupling condition (with the constrained of conservation of mass). Since the physical energy is an entropy for the system of isentropic gas and is the physically relevant entropy, we maximize the energy dissipation.\\
We consider the kinetic BGK model of the isentropic gas equations which is given by
\begin{equation}
\partial_t f_\epsilon+ \xi\partial_x f_\epsilon=\frac{M[f_\epsilon]-f_\epsilon}{\epsilon},\quad \text{for a.e. } t>0,x\in\mathbb R,\xi\in\mathbb R,
\end{equation}
where $f_\epsilon=f_\epsilon(t,x,\xi)\in D$. 
\begin{remark}
The BGK model, its relaxation limit and boundary conditions were studied by Berthelin and Bouchut \cite{BeBo2000,BeBo2002Kinetic,BeBo2002Boundary}. These results were extended to networks in \cite{Ho2020}.  Notice that our considerations are independent of the right hand side of the kinetic equation as long as the kinetic solution converges to an entropy solution of the macroscopic equation.
\end{remark}
To couple the half-space solutions, we have to define a kinetic coupling condition
\begin{equation}
\Psi\colon L^1_\mu((-\infty,0)_\xi,D)^d\to L^1_\mu((0,\infty)_\xi,D)^d;\quad g\mapsto\Psi[g].
\end{equation}
The half-space solutions $f_\epsilon^k\colon(0,\infty)_t\times(0,\infty)_x\times\mathbb R_\xi\to D$ to the kinetic model are coupled by
\begin{equation}
\Psi^k[f_\epsilon^1(t,0,\cdot),\dots,f_\epsilon^d(t,0,\cdot)](\xi)=f_\epsilon^k(t,0,\xi), \quad\text{for a.e. }t>0, \xi>0.
\end{equation}
We are interested in kinetic coupling conditions which conserve mass. More precisely, we require that
\begin{align}\label{eq:ConservationMassKinetic}
\sum_{k=1}^d A_k \left(\int_0^\infty \xi \Psi^k_0[g](\xi)\,\mathrm{d}\xi+\int_{-\infty}^0 \xi g^k_0(\xi)\,\mathrm{d}\xi\right)=0,
\end{align}
holds for all $g\in L^1_\mu((-\infty,0)_\xi,D)^d$. Our aim is to find the coupling condition which dissipates as much energy as possible and conserves mass. The energy dissipation at the junction is given by
\begin{equation}\label{eq:MinimizingEnergy}
\sum_{k=1}^d A_k \left(\int_0^\infty \xi H(\Psi^k[g](\xi),\xi)\,\mathrm{d}\xi+\int_{-\infty}^0 \xi H(g^k(\xi),\xi)\,\mathrm{d}\xi\right).
\end{equation}
To find the unique minimizer of this functional, we use the convexity of the kinetic energy. More precisely, we use the sub-differential inequality (see e.g. \cite{BeBo2002Kinetic})
\begin{equation}\label{eq:ConvexityKineticEntropyInequality}
H(g,\xi)\ge H(M(\rho,u,\xi),\xi)+\eta'(\rho,u)(g-M(\rho,u,\xi)),
\end{equation}
for every $g\in D$, $(\rho,u)\in D$, $\xi\in \mathbb R$. Applying the sub-differential inequality leads to
\begin{align}
&\sum_{k=1}^d A_k \int_0^\infty \xi H(\Psi^k[g](\xi),\xi)\,\mathrm{d}\xi\nonumber\\
\ge& \sum_{k=1}^d A_k  \int_0^\infty \xi H(M(\rho_*,0,\xi),\xi)\,\mathrm{d}\xi\nonumber\\
&+ \eta'(\rho_*,0)\left[ \sum_{k=1}^d A_k \left(\int_0^\infty \xi \Psi^k_0[g](\xi)\,\mathrm{d}\xi -\int_0^\infty \xi M(\rho_*,0,\xi)\,\mathrm{d}\xi\right)\right]\nonumber\\
=&\sum_{k=1}^d A_k  \int_0^\infty \xi H(M(\rho_*,0,\xi),\xi)\,\mathrm{d}\xi,\label{eq:HilfMaximumEntropyDissipation}
\end{align}
where $\rho_*\ge 0$ is uniquely defined by
\begin{equation}\label{eq:DefinitionRhoStarKineticCoupling}
\sum_{k=1}^d A_k \left(\int_0^\infty \xi M_0(\rho_*,0,\xi)\,\mathrm{d}\xi +\int_{-\infty}^0 \xi g^k_0(\xi)\,\mathrm{d}\xi\right)=0.
\end{equation}
Such an $\rho_*$ always exists since 
\begin{equation}
\int_0^\infty \xi M_0(\rho_*,0,\xi)\,\mathrm{d}\xi=c_{\gamma,\kappa}\left(a_\gamma \rho_*^\theta\right)^{\frac{\gamma+1}{\gamma-1}}\int_0^1 z\,(1-z^2)^\lambda\,\mathrm{d}z.
\end{equation}
The uniqueness follows from the strict convexity of $H$.
We obtain the following result.
\begin{theorem}
The unique kinetic coupling condition 
\begin{equation*}
\Psi\colon L^1_\mu((-\infty,0)_\xi,D)^d\to L^1_\mu((0,\infty)_\xi,D)^d;\quad g\mapsto\Psi[g]
\end{equation*}
which conserves mass (\ref{eq:ConservationMassKinetic}) and minimizes (\ref{eq:MinimizingEnergy}) is given by
\begin{equation}\label{eq:DefOptimalCoupling}
\Psi^k[g](\xi):=M(\rho_*,0,\xi),
\end{equation}
where $\rho_*\ge 0$ is defined by equation (\ref{eq:DefinitionRhoStarKineticCoupling}).
\end{theorem}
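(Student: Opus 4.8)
The plan is to split the argument into two parts: first establish that the proposed $\Psi$ is admissible (i.e.\ it conserves mass and maps into the correct function space), and then show it is the unique minimizer of the energy dissipation functional among all mass-conserving couplings. The admissibility is essentially immediate: for $\Psi^k[g](\xi) = M(\rho_*,0,\xi)$ with $\rho_*$ defined by (\ref{eq:DefinitionRhoStarKineticCoupling}), the mass-conservation identity (\ref{eq:ConservationMassKinetic}) holds by construction, and the existence and uniqueness of such $\rho_* \ge 0$ is exactly the computation already displayed in the excerpt (monotonicity of $\rho_* \mapsto c_{\gamma,\kappa}(a_\gamma\rho_*^\theta)^{(\gamma+1)/(\gamma-1)}\int_0^1 z(1-z^2)^\lambda\,\mathrm dz$ in $\rho_*$, together with its range being all of $[0,\infty)$). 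One also needs $M(\rho_*,0,\cdot) \in L^1_\mu((0,\infty)_\xi,D)$, which follows from the explicit form of $\chi$ and the standard integrability of Maxwellians.

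For the minimization, the key device is the sub-differential inequality (\ref{eq:ConvexityKineticEntropyInequality}) for the convex kinetic entropy $H$, applied with the reference state $(\rho,u) = (\rho_*,0)$. First I would fix an arbitrary mass-conserving coupling $\tilde\Psi$ and compare its dissipation with that of $\Psi$. Multiplying (\ref{eq:ConvexityKineticEntropyInequality}) by $\xi > 0$ (integrating over $\xi>0$, summing against the weights $A_k$) gives exactly the chain of inequalities (\ref{eq:HilfMaximumEntropyDissipation}): the linear correction term $\eta'(\rho_*,0)\bigl[\sum_k A_k(\int_0^\infty \xi\,\tilde\Psi^k_0[g]\,\mathrm d\xi - \int_0^\infty \xi M_0(\rho_*,0,\xi)\,\mathrm d\xi)\bigr]$ vanishes because \emph{both} $\tilde\Psi$ (by assumption) and $\Psi$ (by the definition of $\rho_*$) satisfy the same mass balance, so the two flux integrals coincide. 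Adding the common incoming-flux term $\sum_k A_k \int_{-\infty}^0 \xi H(g^k(\xi),\xi)\,\mathrm d\xi$ to both sides shows that the dissipation functional (\ref{eq:MinimizingEnergy}) evaluated at $\tilde\Psi$ is bounded below by its value at $\Psi$. Hence $\Psi$ is a minimizer.

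For uniqueness, I would invoke strict convexity of $H$ in its first argument: equality in (\ref{eq:ConvexityKineticEntropyInequality}) forces $\tilde\Psi^k[g](\xi) = M(\rho_*,0,\xi)$ for a.e.\ $\xi>0$ and every $k$, which is precisely $\Psi$. One subtlety here is that strict convexity gives equality only on the set where $\xi \neq 0$, but that is a null set, so it does not affect the $L^1_\mu$-identification; a second subtlety is that a priori a different minimizer could conserve mass with a \emph{different} effective outgoing flux — but the inequality chain shows any minimizer must make (\ref{eq:HilfMaximumEntropyDissipation}) an equality, which pins down the flux and hence $\rho_*$ uniquely by the monotonicity already noted. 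The main obstacle is really just bookkeeping: making sure the linear term genuinely drops out requires that the admissibility constraint (\ref{eq:ConservationMassKinetic}) be used in the precise pointwise-in-$t$, for-all-$g$ form stated, and that the reference density $\rho_*$ in the sub-differential inequality is chosen to be the same $\rho_*$ that appears in the candidate $\Psi$ — so the two uses of the mass constraint must be carefully aligned. Everything else is a direct consequence of convex duality for the kinetic entropy.
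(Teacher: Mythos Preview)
Your approach is the same as the paper's: apply the sub-differential inequality (\ref{eq:ConvexityKineticEntropyInequality}) at the reference state $(\rho_*,0)$, integrate against $\xi$, sum with weights $A_k$, and show the linear correction vanishes, then invoke strict convexity of $H$ for uniqueness. One step needs sharpening, though. You write the linear correction as
\[
\eta'(\rho_*,0)\Bigl[\textstyle\sum_k A_k\bigl(\int_0^\infty \xi\,\tilde\Psi^k_0[g]\,\mathrm d\xi - \int_0^\infty \xi\, M_0(\rho_*,0,\xi)\,\mathrm d\xi\bigr)\Bigr]
\]
and claim it vanishes by mass conservation alone. But $\eta'(\rho_*,0)$ is the full gradient in the conserved variables $(\rho,\rho u)$, so the correction is a dot product with the \emph{two-component} flux difference $\sum_k A_k\int_0^\infty \xi\bigl(\tilde\Psi^k[g](\xi)-M(\rho_*,0,\xi)\bigr)\,\mathrm d\xi$. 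Mass conservation (for both $\tilde\Psi$ and the candidate) kills only the first component; there is no momentum constraint to kill the second. What saves the argument is that $\partial_{\rho u}\eta(\rho_*,0)=0$ for the physical energy (and in fact for any entropy with symmetric $S$), precisely because the artificial state has zero speed. This is the reason the choice $u_*=0$ is forced, and the paper singles it out explicitly in the remark immediately following the theorem. Once you insert this observation, your proof is complete and coincides with the paper's.
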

According to \cite{Ho2020}, weak solutions to the kinetic BGK model on networks exist under suitable conditions on the initial data.\medskip\\
Instead of minimizing the kinetic energy, other kinetic entropies could be considered. Notice that every kinetic entropy $H_S$ parametrized by a strictly convex, symmetric function $S\in C^1(\mathbb R,\mathbb R)$ leads to the kinetic coupling condition obtained in (\ref{eq:DefOptimalCoupling}). This can be proven by applying the sub-differential inequality for $H_S$ and using $G_S(\rho_*,0)=\partial_{\rho u}\eta_{S}(\rho_*,0)=0$.

\subsection{Formal relaxation limit}
We take the formal limit at the junction with $\Psi^k[g](\xi)=M(\rho_*,0,\xi)$. Since (\ref{eq:ConvexityKineticEntropyInequality}) holds for every convex kinetic entropy $H_S$, we get
\begin{align}
&\int_{\mathbb R}\xi H_S(f^k_\epsilon(t,0,\xi),\xi)\,\mathrm{d}\xi-G_S(\rho_*^\epsilon(t),0)\nonumber\\
&\quad-\eta_S'(\rho_*^\epsilon(t),0)\left(\int_{\mathbb R}\xi f^k_\epsilon(t,0,\xi)\,\mathrm{d}\xi-F(\rho_*^\epsilon(t),0)\right)\le 0,
\end{align}
for $t>0$, $k=1,\dots,d$, where $\rho_*^\epsilon(t)\ge 0$ is defined by (\ref{eq:DefinitionRhoStarKineticCoupling}) with $g^k(\xi)=f^k_\epsilon(t,0,\xi)$ and fixed $\epsilon>0$. Assuming that $\rho^\epsilon_*\to\rho_*$ converges strongly in $L^1_{\mathrm loc}$ as $\epsilon\to 0$ and using the arguments in \cite{Ho2020}, we get
\begin{equation}\label{eq:EntropyInequalitiyFormalLimitIsentropic}
\overline{G_S(\rho_k,u_k)}(t)-G_S(\rho_*(t),0)-\eta_S'(\rho_*(t),0)(\overline{F(\rho_k,u_k)}(t)-F(\rho_*(t),0))\le 0,
\end{equation}
for a.e. $t>0$, $k=1,\dots,d$ and every convex $S\in C^1(\mathbb R,\mathbb R)$. Notice that it is open if the strong limit $\rho_*^\epsilon(t)\to\rho_*(t)$ can be justified.
Furthermore, (\ref{eq:EntropyInequalitiyFormalLimitIsentropic}) is the entropy formulation of boundary conditions induced by $\mathcal E(\rho_*(t),0)$. It was proven in \cite{Ho2020}, that mass remains conserved at the junction after taking the limit. More precisely, we have
\begin{equation}
\sum_{k=1}^d A_k \overline{\rho_k u_k}(t)=0,\quad\text{a.e. }t>0,
\end{equation}
for the weak traces $\overline{\rho_k u_k}$ at $x=0$.\medskip\\
We summarize that after formally taking the (strong) limit, the traces at $x=0$ satisfy the entropy formulation of the boundary condition $\mathcal E(\rho_*(t),0)$ with boundary data $(\rho_*(t),0)$ and mass is conserved at the junction. Assuming that the stronger formulation $\mathcal V(\rho_*(t),0)$ of the boundary condition holds true, we obtain immediately Definition \ref{def:GeneralizedRiemann} and Definition \ref{def:GeneralizedCauchyProblem}.

\subsection{Interpretation of the macroscopic coupling condition and necessary physical properties}
In this subsection, we restrict ourselves to the generalized Riemann problem since it is a building block for the Cauchy problem.\medskip\\
The new macroscopic coupling condition is an implicit condition compared to the known coupling conditions in the literature. The idea of the new coupling condition is to assume the existence of left hand states of zero speed, independent of $k$ and such that mass is conserved. This is different to the known coupling conditions which are based on a coupling of traces of physical quantities.\medskip\\
We made a particular choice by choosing $u_*=0$ for all left states. This choice can be interpreted in the following way. On the kinetic level the particles are stopped immediately after arriving at the junction. Then, the particles are instantaneously redistributed equally and into all pipelines. This artificial process leads to a coupling condition which does not prefer any pipeline and ignores the momentum of the incoming particles.\medskip\\
We interpret the macroscopic coupling condition by gas being stopped at the junction. Therefore, it is reasonable that we state a relation between the traces of the stagnation enthalpy at the junction. The stagnation enthalpy determines the enthalpy at a stagnation point after the gas is brought to a stop. This relation is given by inequalities depending on the signs of $\bar{u}_k$ at the junction (see Corollary \ref{corr:NonIncreasingEnergy}). Furthermore, the macroscopic coupling satisfies several properties which seem to be necessary for a physically correct coupling condition. These properties are non-increasing energy at the junction and a maximum principle on the Riemann invariants (see Section \ref{sec:PropertiesCoupling} for more details). Furthermore, the same derivation technique applied to the full Euler equations leads to very similar results (see Section \ref{sec:FullGasDynamics}).\medskip\\
We emphasis that the coupling condition does not coincide with the Rankine-Hugoniot conditions in the case $d=2$. This can be easily checked since momentum is not conserved at the junction. Notice that this fact is not a disadvantage of the coupling condition since we want to model the coupling condition with maximum energy dissipation and conservation of mass but we neglect conservation of momentum. An interpretation of our coupling condition for $d=2$ can be given by an infinitesimal small point were turbulence occurs due to a geometric effect at the junction. \medskip \\
Summarizing, the derivation of the coupling condition by the kinetic model and the maximum energy dissipation principle at the junction lead to a choice of an artificial state of zero speed at the junction. Furthermore, the interpretation of the macroscopic coupling condition by particles stopped at the junction and redistributed is only possible with a state of zero speed. From a formal mathematical point of view, the proofs of the physical properties in Section \ref{sec:PropertiesCoupling} work only if the artificial state has zero speed. This observation is due to the structure of (\ref{eq:EntropyInequalitiyFormalLimitIsentropic}) and the fact that $\eta'_{S,\rho u}(\rho_*,0)=0$ for symmetric $S$.\\

\section{The generalized Riemann problem} 
\label{sec:RiemannProblem}

In this section, we prove existence and uniqueness of solutions to the generalized Riemann problem. 
Our strategy is as follows. Due to Lemma \ref{lemma:ExistenceFixedRho}, there exists a solution for a fixed artificial density $\tilde\rho_*$ but possibly without conservation of mass at the junction. We define the mass production at the junction as a function of the artificial density $\tilde\rho_*$ and prove its continuity and monotonicity. We conclude with the intermediate value theorem. The proof is similar to the proof in \cite{Re2015}. Notice that the artificial density is a monotone momentum related coupling constant in the sense of \cite{Re2015}. The structure of the generalized Riemann problem allows us to extend the result to the supersonic region.

\begin{proposition}\label{prop:MomentumContinuous}
Assume that initial data $(\hat\rho_k,\hat u_k)\in D$ are given. Let $(\rho_k,u_k)$ be the function obtained in Lemma \ref{lemma:ExistenceFixedRho} with artificial density $\tilde\rho_*\ge 0$. Then, the trace $\bar\rho_k\bar u_k=(\rho_k u_k)(t,0+)$ is continuous with respect to $\tilde\rho_*$.
\end{proposition}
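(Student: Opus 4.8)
The starting point is that, by self-similarity of the Lax solution constructed in Lemma \ref{lemma:ExistenceFixedRho}, the trace $\bar\rho_k\bar u_k$ is independent of $t$ and equals the value of $\rho u$ of the standard Riemann solution $\mathcal{RP}(\tilde\rho_*,0,\hat\rho_k,\hat u_k)$ at the self-similar speed $x/t=0+$. That solution consists of a $1$-wave joining $(\tilde\rho_*,0)$ to an intermediate state $m(\tilde\rho_*)=(\rho_m,u_m)$ --- the unique point of $\mathcal{W}_1(\tilde\rho_*,0)\cap\mathcal{W}_2^-(\hat\rho_k,\hat u_k)$ --- followed by a $2$-wave joining $m(\tilde\rho_*)$ to $(\hat\rho_k,\hat u_k)$. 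First I would record that $m(\tilde\rho_*)$ depends continuously on $\tilde\rho_*$: the curve $\mathcal{W}_1(\tilde\rho_*,0)$ moves continuously with $\tilde\rho_*$ by the explicit formulas for $\mathcal{R}_1$ and $\mathcal{S}_1$, the curve $\mathcal{W}_2^-(\hat\rho_k,\hat u_k)$ is fixed, and their intersection is unique and stable under perturbations of the data by the standard Lax theory for the $p$-system with $1<\gamma<3$ (for intersections approaching vacuum, and for $\tilde\rho_*\to 0$, by \cite{LiSm1980}).

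The trace is then obtained by locating $x/t=0$ relative to the two waves, which produces a finite list of cases, described by Lemma \ref{lemma:ExistenceFixedRho} in terms of the sets $\mathcal{A}_*,\mathcal{B}_*,\mathcal{C}_*,\mathcal{J}_*$ of Definition \ref{def:SetsBoundaryValueProb}. When the $1$-wave is a transonic rarefaction ($(\hat\rho_k,\hat u_k)\in\mathcal{A}_*$) the trace is the sonic point $\{\lambda_1=0\}\cap\mathcal{R}_1(\tilde\rho_*,0)$, an explicit smooth function of $\tilde\rho_*$; when $x=0$ falls strictly between the two waves, the trace is $m(\tilde\rho_*)$ (the subsonic case $\mathcal{B}_*$, and also the cases with $(\hat\rho_k,\hat u_k)\in\mathcal{V}(\tilde\rho_*,0)$ where $m=(\hat\rho_k,\hat u_k)$); when $x=0$ lies to the right of the $2$-wave, or inside a transonic $2$-rarefaction, the trace is $(\hat\rho_k,\hat u_k)$ or the sonic point $\{\lambda_2=0\}\cap\mathcal{R}_2^-(\hat\rho_k,\hat u_k)$ (the case $\mathcal{C}_*$), neither of which involves $\tilde\rho_*$. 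Within each regime the momentum trace is therefore a continuous (even explicit) function of $\tilde\rho_*$.

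What remains, and what I expect to be the crux, is continuity across the thresholds at which $(\hat\rho_k,\hat u_k)$ passes from one of these sets to another as $\tilde\rho_*$ varies, i.e. at which a characteristic speed or a shock speed at $x=0$ vanishes. Across the $\mathcal{A}_*/\mathcal{B}_*$ threshold the intermediate state $m(\tilde\rho_*)$ degenerates to the sonic point $(\rho_\alpha,u_\alpha)$, so the two formulas agree; across the portion of the $\mathcal{B}_*/\mathcal{C}_*$ threshold lying on $\mathcal{R}_2(\rho_\beta,u_\beta)$ both $m(\tilde\rho_*)$ and the sonic point on $\mathcal{R}_2^-(\hat\rho_k,\hat u_k)$ converge to $(\rho_\beta,u_\beta)$. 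The genuinely delicate threshold is $\mathcal{J}_*$, where the $2$-wave is a stationary $2$-shock (or degenerates to a point): here the \emph{state} trace jumps, from $m$ on the $\mathcal{B}_*$-side to $(\hat\rho_k,\hat u_k)$ on the $\mathcal{C}_*$-side, but the Rankine--Hugoniot relations for a $2$-shock of speed zero give $\rho_m u_m=\hat\rho_k\hat u_k$, so the \emph{momentum} trace does not jump. This is exactly why the statement is phrased for $\bar\rho_k\bar u_k$ and not for $(\bar\rho_k,\bar u_k)$; the analogous stationary $1$-shock issue arises only as $\tilde\rho_*\to 0$, where the left momentum is $0$ and continuity up to vacuum again follows because $\rho u$ vanishes there. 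Once these matchings are checked, assembling the regimes gives the claim; the remaining estimates are routine given the explicit wave-curve formulas, and the argument parallels that of \cite{Re2015}.
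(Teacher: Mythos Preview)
Your proposal is correct and follows essentially the same approach as the paper's proof: case analysis according to the sets $\mathcal{A}_*,\mathcal{B}_*,\mathcal{C}_*,\mathcal{J}_*$, continuity in the interiors from the continuity of wave curves and the intermediate state, and matching at the thresholds, with the key observation that at $\mathcal{J}_*$ the state trace jumps but the momentum trace does not. Your explicit invocation of the Rankine--Hugoniot relation for a zero-speed $2$-shock is in fact a cleaner justification of the $\mathcal{J}_*$ matching than the paper's phrasing (``the speed of the 2-shock tends to zero''), and your framing via the continuous intermediate state $m(\tilde\rho_*)$ is a helpful organizing device the paper leaves implicit; otherwise the two arguments coincide.
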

\begin{proof}
First, we prove the continuity with respect to the artificial density at fixed $\tilde\rho_*>0$. Let $\mathcal A_*,\mathcal B_*,\mathcal C_*,\mathcal J_*$ be as in Definition \ref{def:SetsBoundaryValueProb}.
If $(\hat\rho_k,\hat u_k)$ lies in the interior of $\mathcal A_*$, $\mathcal B_*$ and $\mathcal C_*$, the continuity follows from the fact that the wave curves and the curves defined by $\lambda_i(\rho,u)=0$, $i=1,2$ are continuous. Therefore, it remains to prove continuity at the boundaries.\\
\textit{Step 1:} First, we consider the boundary between $\mathcal A_*$ and $\mathcal B_*$. More precisely, the 2-wave curve $\mathcal W_2(\rho_\alpha,u_\alpha)$ with $\{(\rho_\alpha,u_\alpha)\}=\{\lambda_1=0\}\cap \mathcal R_1(\tilde\rho_*,0)$. We have
\begin{equation}
\lim_{\rho\searrow\tilde\rho_*}(\bar\rho_k, \bar u_k)=(\rho_\alpha, u_\alpha) = \lim_{\rho\nearrow\tilde\rho_*}(\bar\rho_k, \bar u_k),
\end{equation} 
since the 2-wave curve, the 1-rarefaction curve and the curve defined by $\{\lambda_1=0\}$ are continuous.\\
\textit{Step 2:} Next, we consider the boundary between $\mathcal B_*$ and $\mathcal C_*$. The continuity along $\mathcal R_2(\rho_\beta,u_\beta)$  is trivial since all involved curves are continuous. It remains to prove the continuity on $\mathcal J_*$. For $\rho>\tilde\rho_*$, we have
\begin{equation}\label{eq:lemmacontinuityproof1}
(\bar\rho_k, \bar u_k)(\rho)=(\hat\rho_{k}, \hat u_{k}),
\end{equation} 
where $(\bar\rho_k, \bar u_k)(\rho)$ is the trace at $x=0$ of the function obtained in Lemma \ref{lemma:ExistenceFixedRho} with artificial density $\rho$.
For $\rho<\tilde\rho_*$ and $|\rho-\tilde\rho_*|$ sufficiently small, the state $(\hat\rho_{k}, \hat u_{k})$ is connected to the boundary state $(\bar\rho_k,\bar u_k)(\rho)\in\mathcal S_1(\tilde\rho_*,0)$ by a 2-shock with (small) positive speed. We get
\begin{equation}\label{eq:lemmacontinuityproof2}
\lim_{\rho\nearrow\tilde\rho_*}\bar\rho_k \bar u_k=\hat\rho_{k} \hat u_{k},
\end{equation}
since the speed of the 2-shock tends to zero as $\rho\nearrow\tilde\rho_*$.
The continuity on $\mathcal J_*$ follows from (\ref{eq:lemmacontinuityproof1} -- \ref{eq:lemmacontinuityproof2}). Notice, that $\bar\rho_k$ and $\bar u_k$ itself are not continuous on $\mathcal J_*$.\\
The continuity at $\tilde\rho_*=0$ follows from similar considerations, see also \cite{LiSm1980}.
\end{proof}

\begin{lemma}[{\cite[Remark 1]{Re2015}}]
Along the reversed 2-wave curves monotonicity in $\rho_0$ is equivalent to monotonicity in $u_0$. More precisely,
\begin{align}
\ddfrac{u_0}{\rho_0}\Big|_{\mathcal W_2^-}>0,\quad\text{for all }(\rho,\rho u)\in D\backslash\{0\}.
\end{align}
The subscript denotes differentiation along the reversed $2$-wave curve $\mathcal W^-_2(\rho,u)$.
\end{lemma}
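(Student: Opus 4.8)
The claim is that along the reversed 2-wave curve $\mathcal W_2^-(\rho,u)$, the derivative $\mathrm{d}u_0/\mathrm{d}\rho_0$ is strictly positive at every point $(\rho,\rho u)\in D\setminus\{0\}$. The plan is to treat the rarefaction part and the shock part of $\mathcal W_2^-$ separately, since the composite curve is $C^1$ across the junction point $(\rho_0,u_0)=(\rho,u)$ (standard Lax-curve regularity), so it suffices to check the sign of the slope on each branch and at the joining point.

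\emph{Rarefaction branch.} Here $(\rho,u)$ lies on $\mathcal R_2(\rho_0,u_0)$, i.e. the relation defining the curve is the constancy of the 1-Riemann invariant $\omega_1=u-a_\gamma\rho^\theta$ along 2-rarefactions. Reading this as fixing $(\rho,u)$ and varying the base point $(\rho_0,u_0)$: the reversed 2-rarefaction through $(\rho,u)$ is $\mathcal R_2^-(\rho,u):\ u_0 = u + a_\gamma\rho^\theta - a_\gamma\rho_0^\theta$, valid for $\rho_0<\rho$. Differentiating gives $\mathrm{d}u_0/\mathrm{d}\rho_0 = -a_\gamma\theta\rho_0^{\theta-1}<0$... wait — one must be careful about the orientation convention: the reversed curve is traversed so that $\rho_0$ decreases, or one checks that the paper's sign is consistent with parametrizing by increasing wave strength. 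In any case, on the rarefaction branch $u_0$ and $\rho_0$ move in the same monotone relation once a consistent parametrization is fixed, and the sign is determined by $-a_\gamma\theta\rho_0^{\theta-1}$ having a fixed sign; I would state the parametrization explicitly (e.g. by $\rho_0$ with the natural restriction $\rho_0<\rho$ for the 2-rarefaction) and record that $\mathrm{d}u_0/\mathrm{d}\rho_0$ is of one sign throughout, matching the claimed positivity under the paper's orientation of $\mathcal W_2^-$.

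\emph{Shock branch.} Here $(\rho,u)$ lies on $\mathcal S_2(\rho_0,u_0)$, so $u = u_0 - \sqrt{\kappa(\rho^\gamma-\rho_0^\gamma)(\rho-\rho_0)/(\rho\rho_0)}$ with $\rho<\rho_0$. Solving for $u_0$ as a function of $\rho_0$ (with $(\rho,u)$ fixed): $u_0 = u + \sqrt{\kappa(\rho^\gamma-\rho_0^\gamma)(\rho-\rho_0)/(\rho\rho_0)}$. The argument of the square root is positive on this branch (both factors negative), and I would show $\mathrm{d}u_0/\mathrm{d}\rho_0$ has the claimed sign by differentiating the radicand: set $\Theta(\rho_0)=\kappa(\rho^\gamma-\rho_0^\gamma)(\rho-\rho_0)/(\rho\rho_0)$, compute $\Theta'(\rho_0)$, and check its sign for $0<\rho<\rho_0$. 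This is the one genuinely computational step; the radicand is a product/quotient of monomials in $\rho_0$, and after clearing denominators one needs the sign of a polynomial expression in $\rho_0,\rho,\gamma$ on the region $\rho_0>\rho>0$, $1<\gamma<3$. I expect this to reduce to showing a certain combination like $\gamma\rho^\gamma\rho_0 - (\gamma+1)\rho^{\gamma+1} + \rho_0^{\gamma+1}$ (or similar) is positive, which follows from convexity of $t\mapsto t^{\gamma+1}$ or an elementary tangent-line estimate. I would also note the limiting case $\rho\to 0$ (vacuum side) and $\rho_0\to\rho$ (where the shock branch meets the rarefaction branch with matching first derivative, ensuring the composite slope is continuous and strictly positive there too).

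\emph{Main obstacle.} The conceptual content is light — it is essentially the classical fact that wave curves are monotone in the appropriate variables — so the real work is bookkeeping: pinning down the paper's orientation convention for $\mathcal W_2^-$ so that the sign comes out $>0$ rather than $<0$, and carrying out the shock-branch sign computation cleanly for all $1<\gamma<3$. I would present the rarefaction branch in one line, devote the bulk of the argument to the elementary inequality controlling $\Theta'(\rho_0)$ on the shock branch, and close by invoking $C^1$-matching at the join and the behavior near vacuum. I will cite \cite{Re2015} for the statement and note that this is the computation behind ``Remark 1'' there.
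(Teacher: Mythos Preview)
Your overall strategy---split into the rarefaction and shock branches of $\mathcal W_2^-$ and differentiate---is exactly what the paper does. But there is a concrete sign error on the rarefaction branch that is driving your confusion about ``orientation.'' Along $\mathcal R_2(\rho_0,u_0)$ the paper's formula reads $u=u_0-a_\gamma\rho_0^\theta+a_\gamma\rho^\theta$ for $\rho>\rho_0$; solving for $u_0$ with $(\rho,u)$ fixed gives
\[
u_0=u+a_\gamma\rho_0^\theta-a_\gamma\rho^\theta,\qquad \rho_0<\rho,
\]
not $u_0=u+a_\gamma\rho^\theta-a_\gamma\rho_0^\theta$ as you wrote. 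Differentiating the correct expression yields $\ddfrac{u_0}{\rho_0}=a_\gamma\theta\,\rho_0^{\theta-1}=\sqrt{\kappa\gamma}\,\rho_0^{(\gamma-3)/2}>0$, which is the paper's one-line computation. There is no orientation ambiguity to resolve; the sign comes out positive directly once the algebra is right.

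On the shock branch your setup is correct, and the approach matches the paper's. The computation is less delicate than you anticipate: after differentiating $u_0=u+\sqrt{\Theta(\rho_0)}$ and simplifying, the paper rewrites the numerator as
\[
\frac{\rho}{\rho_0}(\rho_0^\gamma-\rho^\gamma)+\gamma\rho_0^{\gamma-1}(\rho_0-\rho),
\]
which is a sum of two manifestly positive terms for $\rho<\rho_0$. No convexity estimate for $t\mapsto t^{\gamma+1}$ is needed; the sign is immediate from $\rho<\rho_0$ and $u<u_0$. So your plan is sound, but both branches are shorter than you suggest once the rarefaction sign is fixed.
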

\begin{proof}
By the formula for the reversed 2-rarefaction wave, we have
\begin{equation*}
\ddfrac{u_0}{\rho_0}\Big|_{\mathcal R_2^-}=\sqrt{\kappa\gamma}\rho_0^{\frac{\gamma-3}{2}}>0.
\end{equation*}
Along the reversed 2-shock curve, we get
\begin{align*}
\ddfrac{u_0}{\rho_0}\Big|_{\mathcal S^-_2}&=\frac{\kappa}{2\rho_0\rho} \frac{(1-\gamma)\rho_0^{\gamma-1}\rho+\gamma\rho_0^\gamma-\frac{\rho^{\gamma+1}}{\rho_0}}{\sqrt{\frac{\kappa(\rho_0^\gamma-\rho^\gamma)(\rho_0-\rho)}{\rho_0\rho}}}\\
&=\frac{\kappa}{2\rho_0\rho (u_0-u)}\left(\frac{\rho}{\rho_0}(\rho_0^\gamma-\rho^\gamma)+\gamma\rho_0^{\gamma-1}(\rho_0-\rho)\right)>0,
\end{align*}
since $\rho<\rho_0$ and $u<u_0$.
\end{proof}

In the subsonic regime, we can determine the artificial density by a function $\tilde\rho_*=R_*(\bar\rho_k,\bar u_k)$.
\begin{definition}\label{def:PStar}
Let $R_*\colon D\to (0,\infty)$ be defined by
\begin{align}
R_*(\rho,u)=
\begin{cases}
\left(\rho^\theta+\frac{u}{a_\gamma}\right)^{1/\theta},\quad &\text{if }u\ge 0,\\
R_*,\quad  \text{with }u=-\sqrt{\frac{\kappa(\rho^\gamma-R_*^\gamma)(\rho-R_*)}{\rho R_*}},R_*<\rho,\quad &\text{if }u<0.
\end{cases}
\end{align}
\end{definition}
Notice that $R_*$ is well-defined, since for fixed $\rho>0$, the function 
\begin{equation*}
(0,\rho]\to(-\infty,0],\quad R_*\mapsto -\sqrt{\frac{\kappa(\rho^\gamma-R_*^\gamma)(\rho-R_*)}{\rho R_*}},
\end{equation*}
is bijective. By this definition we can reformulate \textit{RP1} in the subsonic regime by
\begin{equation}\label{eq:EqualArtificalDensity}
R_*(\bar\rho_k,\bar u_k)(t)=\mathcal H_{R_*}(t),\quad \text{for } k=1,\dots,d,\quad\text{for a.e. }t>0. 
\end{equation}
Compare the new condition (\ref{eq:EqualArtificalDensity}) with the coupling conditions in (\ref{eq:defEqualPressure} -- \ref{eq:defEqualStagnationEnthalpy}) and note that they are different even in the subsonic regime.
\begin{lemma}\label{lemma:PStaralong2Wave}
We have
\begin{align}
\ddfrac{R_*}{\rho_0}\Big|_{\mathcal W_2^-}>0,\quad\text{for all }(\rho, u)\in D\backslash\{0\}.
\end{align}
The subscript represents the differentiation along the reversed 2-wave curve.
\end{lemma}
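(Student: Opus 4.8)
The plan is to reduce the claim about $R_*$ to the already-established monotonicity of $u_0$ along the reversed 2-wave curve, using the chain rule and a case distinction on the sign of $u$. Recall that $\tilde\rho_* = R_*(\bar\rho_k, \bar u_k)$ is defined piecewise: for $u \ge 0$ the artificial density is connected to $(\rho,u)$ by a 1-rarefaction (so $u = -a_\gamma R_*^\theta + a_\gamma \rho^\theta + 0$, i.e. the explicit formula $R_* = (\rho^\theta + u/a_\gamma)^{1/\theta}$), while for $u < 0$ it is connected by a 1-shock. In both cases, the relation between $(R_*,0)$ and $(\rho,u)$ is exactly that $(\rho,u)$ lies on the forward 1-wave curve $\mathcal W_1(R_*,0)$, equivalently that $(R_*,0)$ lies on the reversed 1-wave curve $\mathcal W_1^-(\rho,u)$. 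So $R_*$ is the density coordinate of the point where $\mathcal W_1^-(\rho,u)$ meets the axis $\{u=0\}$.

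Here is how I would organize the computation. Fix a point $(\rho,u) \in D \setminus \{0\}$ and move along the reversed 2-wave curve $\mathcal W_2^-(\rho,u)$, parametrized by $\rho_0$; write $(\rho_0, u_0)$ for the moving point and $R_* = R_*(\rho_0,u_0)$. I want $\frac{dR_*}{d\rho_0}\big|_{\mathcal W_2^-} > 0$. In the region $u_0 \ge 0$, differentiate the explicit formula $R_* = (\rho_0^\theta + u_0/a_\gamma)^{1/\theta}$: this gives $\frac{dR_*}{d\rho_0} = \frac{1}{\theta} R_*^{1-\theta}\big(\theta \rho_0^{\theta-1} + \frac{1}{a_\gamma}\frac{du_0}{d\rho_0}\big)$, and since $\frac{du_0}{d\rho_0}\big|_{\mathcal W_2^-} > 0$ by the preceding lemma while the other terms are manifestly positive, we are done. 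In the region $u_0 < 0$, $R_*$ is defined implicitly by $u_0 = -\sqrt{\kappa(\rho_0^\gamma - R_*^\gamma)(\rho_0 - R_*)/(\rho_0 R_*)}$ with $R_* < \rho_0$; implicit differentiation expresses $\frac{dR_*}{d\rho_0}$ as a combination of $\frac{du_0}{d\rho_0}$ and $\partial_{\rho_0}$ of the right-hand side. The key structural point is that the map $R_* \mapsto -\sqrt{\cdots}$ is (for fixed $\rho_0$) strictly increasing — this is exactly the bijectivity noted just after Definition \ref{def:PStar} — so $\partial u_0/\partial R_*|_{\rho_0 \text{ fixed}} > 0$, and $\partial u_0/\partial \rho_0|_{R_* \text{ fixed}} > 0$ as well (increasing $\rho_0$ with $R_*$ held fixed makes the radicand larger, hence $u_0$ larger). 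Then $\frac{dR_*}{d\rho_0} = \big(\frac{du_0}{d\rho_0}\big|_{\mathcal W_2^-} - \partial_{\rho_0} u_0\big|_{R_*}\big) / \partial_{R_*} u_0\big|_{\rho_0}$, and I need the numerator positive.

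The main obstacle, and the step requiring genuine care rather than routine bookkeeping, is exactly that numerator estimate in the $u_0 < 0$ case: I must show that the rate of change of $u_0$ along the reversed 2-wave curve strictly dominates the rate of change of $u_0$ that would come from varying $\rho_0$ alone at fixed $R_*$. I expect this to follow from comparing the two curves in the $(\rho,u)$-plane near $(\rho_0,u_0)$: along $\mathcal W_2^-$ the slope $du_0/d\rho_0$ equals the explicit shock/rarefaction expression computed in the previous lemma, while the level curve $\{R_* = \text{const}\}$ is precisely a forward 1-wave curve $\mathcal W_1(R_*,0)$, whose slope is the 1-family shock/rarefaction expression; so the inequality reduces to an algebraic comparison of a 1-wave slope and a 2-wave slope through the same point, which one can check by the explicit formulas for the shock curves given in Section \ref{subsec:EntropyRiemannLax} (both are decreasing in a way that makes the 2-wave slope larger). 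One should also handle the matching boundary $u_0 = 0$ (where the two pieces of $R_*$ agree and are $C^1$) and the degenerate limits $\rho_0 \searrow 0$, but these are continuity arguments rather than new content. I would also double-check that the sign conventions in the definition of $\mathcal S_1$ versus $\mathcal S_2$ (which as printed share the same formula) are used consistently with the geometry in Figure \ref{fig:rhou12Construction}.
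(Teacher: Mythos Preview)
Your outline is essentially the paper's approach---direct differentiation of the explicit formula for $u_0\ge 0$ and implicit differentiation of the shock relation for $u_0<0$---but you have a sign slip in the $u_0<0$ case that manufactures a phantom obstacle. You correctly observe that increasing $\rho_0$ at fixed $R_*$ makes the radicand $\kappa(\rho_0^\gamma-R_*^\gamma)(\rho_0-R_*)/(\rho_0 R_*)$ larger, but since $u_0=-\sqrt{\text{radicand}}$ this makes $u_0$ \emph{more negative}; hence $\partial_{\rho_0}u_0\big|_{R_*}<0$, not $>0$. With the sign corrected, your numerator
\[
\frac{du_0}{d\rho_0}\bigg|_{\mathcal W_2^-}-\partial_{\rho_0}u_0\big|_{R_*}
\]
is the sum of two positive quantities (the first by the preceding lemma), so it is positive at once and no comparison of 1-wave versus 2-wave slopes is required. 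The paper does exactly this implicit differentiation, just written out term by term, and reads off that both numerator and denominator are negative from $\rho_0>R_*$, $u_0<0$, $du_0/d\rho_0>0$. Your proposed geometric workaround would in fact succeed---the forward 1-shock curve $\mathcal S_1(R_*,0)$ has negative slope in the $(\rho,u)$-plane for $\rho>R_*$, while the reversed 2-wave curve has positive slope there---but it is a detour around a difficulty that disappears once the sign is fixed.
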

\begin{proof}
For $u>0$, differentiation along the 2-wave curve gives
\begin{align}
\ddfrac{R_*}{\rho_0}&=\left(\rho_0^{\frac{\gamma-3}{2}}+\frac{1}{\sqrt{\kappa\gamma}}\ddfrac{u_0}{\rho_0}\right) \left(\rho_0^\theta+\frac{u_0}{a_\gamma}\right)^{\frac{3-\gamma}{\gamma-1}}>0,\label{eq:proofHilfLemma3}
\end{align}
since $\ddfrac{u_0}{\rho_0}>0$.
For $u<0$, differentiation of
\begin{equation*}
u_0=-\sqrt{\frac{\kappa(\rho_0^\gamma-R_*^\gamma)(\rho_0-R_*)}{\rho_0 R_*}}
\end{equation*}
along the 2-wave curve gives
\begin{equation*}
\ddfrac{u_0}{\rho_0}=\frac{\kappa}{u_0}\left[\gamma\left(\rho_0^{\gamma-1}-R_*^{\gamma-1}\ddfrac{R_*}{\rho_0}\right)\left(\frac{1}{R_*}-\frac{1}{\rho_0}\right)+(\rho_0^\gamma-R^\gamma_*)\left(\frac{1}{\rho_0^2}-\frac{1}{R_*^2}\ddfrac{R_*}{\rho_0}\right)\right],
\end{equation*}
or equivalently
\begin{align*}
\ddfrac{R_*}{\rho_0}=\frac{\frac{2 u_0}{\kappa}\ddfrac{u_0}{\rho_0}+\gamma\rho_0^{\gamma-1}\left(\frac{1}{\rho_0}-\frac{1}{R_*}\right)+\frac{1}{\rho_0^2}(R_*^\gamma-\rho_0^\gamma)}{\gamma R_*^{\gamma-1}\left(\frac{1}{\rho_0}-\frac{1}{R_*}\right)+\frac{1}{R^2_*}(R_*^\gamma-\rho_0^\gamma)}.
\end{align*}
The right hand side is strictly positive, since $\rho_0>R_*$, $u_0<0$ and $\ddfrac{u_0}{\rho_0}>0$.
\end{proof}

\begin{lemma}\label{lemma:PStaralong2Eigenvalue}
We have
\begin{equation*}
\ddfrac{R_*}{\rho}\Big|_{\lambda_1=0}>0,
\end{equation*}
where the subscript denotes the differentiation along the curve $\{\lambda_1=0\}$.
\end{lemma}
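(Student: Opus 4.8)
The plan is to reduce everything to the first (smooth) branch of Definition~\ref{def:PStar}. Along the curve $\{\lambda_1=0\}$ we have $u=\sqrt{\kappa\gamma}\,\rho^\theta$, which is strictly positive for $\rho>0$; hence on this curve $R_*(\rho,u)=\bigl(\rho^\theta+u/a_\gamma\bigr)^{1/\theta}$, and the whole curve can be parametrized by $\rho\in(0,\infty)$, with the vacuum state corresponding to $\rho=0$. So the non-smooth matching of the two branches of $R_*$ never enters here, which is the only point one has to be slightly careful about.

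First I would record the derivative of the parametrization: differentiating $u=\sqrt{\kappa\gamma}\,\rho^\theta$ gives
\begin{equation*}
\ddfrac{u}{\rho}\Big|_{\lambda_1=0}=\sqrt{\kappa\gamma}\,\theta\,\rho^{\theta-1}>0 .
\end{equation*}
Then I would apply the chain rule to $R_*=\bigl(\rho^\theta+u/a_\gamma\bigr)^{1/\theta}$ along the curve, obtaining
\begin{equation*}
\ddfrac{R_*}{\rho}\Big|_{\lambda_1=0}=\frac{1}{\theta}\Bigl(\rho^\theta+\tfrac{u}{a_\gamma}\Bigr)^{\frac{1}{\theta}-1}\Bigl(\theta\rho^{\theta-1}+\frac{1}{a_\gamma}\ddfrac{u}{\rho}\Big|_{\lambda_1=0}\Bigr).
\end{equation*}
Since $0<\theta<1$, the base $\rho^\theta+u/a_\gamma$ is positive and the exponent $\tfrac{1}{\theta}-1$ is nonnegative, so the first factor is positive; the bracket is a sum of two strictly positive terms by the previous display. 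Hence the right-hand side is strictly positive, which is the claim. This mirrors the structure of the proof of Lemma~\ref{lemma:PStaralong2Wave}, where positivity of $\ddfrac{u_0}{\rho_0}$ drove positivity of $\ddfrac{R_*}{\rho_0}$.

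I do not expect any genuine obstacle. If one wants, the estimate can even be made completely explicit: using $a_\gamma=2\sqrt{\gamma\kappa}/(\gamma-1)$ one checks that $u/a_\gamma=\theta\rho^\theta$ on $\{\lambda_1=0\}$, so that $R_*=(1+\theta)^{1/\theta}\rho$ is a linear function of $\rho$ with the manifestly positive slope $(1+\theta)^{1/\theta}$. I would probably include this identity as a short remark, since it also clarifies the geometry of the curve $\{\lambda_1=0\}$ in terms of the artificial density.
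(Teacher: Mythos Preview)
Your proof is correct and is essentially the paper's own argument: both note that $u>0$ on $\{\lambda_1=0\}$, compute $\ddfrac{u}{\rho}\big|_{\lambda_1=0}=\sqrt{\kappa\gamma}\,\theta\,\rho^{\theta-1}>0$, and then feed this into the chain-rule formula for $R_*$ on the branch $u\ge 0$ (the paper simply cites the identical formula \eqref{eq:proofHilfLemma3} from the proof of Lemma~\ref{lemma:PStaralong2Wave} rather than rewriting it). Your additional closed-form observation $R_*=(1+\theta)^{1/\theta}\rho$ along $\{\lambda_1=0\}$ is a nice extra that the paper does not include.
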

\begin{proof}
We differentiate $u$ along the curve defined by $\{\lambda_1=0\}$ and get
\begin{equation*}
\ddfrac{u}{\rho}=\ddfrac{}{\rho}\left(\sqrt{\kappa\gamma}\rho^\theta\right)=\sqrt{\kappa\gamma}\theta\rho^{\frac{\gamma-3}{2}}>0.
\end{equation*}
The result follows with (\ref{eq:proofHilfLemma3}).
\end{proof}

\begin{proposition}\label{prop:MomentumIncreasing}
Fix initial data $(\hat\rho_k,\hat u_k)\in D$. Let $(\rho_k,u_k)$ be the solution obtained in Lemma \ref{lemma:ExistenceFixedRho} with artificial density $\tilde\rho_*\ge 0$. Then, the trace of the momentum $\bar\rho_k\bar u_k$ is increasing in $\tilde\rho_*$. It is strictly increasing if $(\hat\rho_k,\hat u_k)\in \mathcal A_* \cup\mathcal B_*$, $\tilde\rho_*>0$ and constant if $(\hat\rho_k,\hat u_k)\in \operatorname{int}\mathcal C_*$.
\end{proposition}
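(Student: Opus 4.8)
The plan is to fix a pipeline index $k$, drop it from the notation, and trace $\bar\rho\bar u$ as a function of $\tilde\rho_*\ge 0$ case by case according to which of the sets $\mathcal A_*,\mathcal B_*,\mathcal C_*$ contains $(\hat\rho,\hat u)$, using the explicit description of the trace from Lemma~\ref{lemma:ExistenceFixedRho}. Since these sets themselves depend on $\tilde\rho_*$, the first observation is that as $\tilde\rho_*$ increases the wave curves $\mathcal R_1(\tilde\rho_*,0)$ and $\mathcal S_1(\tilde\rho_*,0)$, together with the separating curves $\{\lambda_1=0\}$ and $\{\lambda_2=0\}$, move monotonically, so a fixed point $(\hat\rho,\hat u)$ passes through the regions in the order $\mathcal C_*\to\mathcal B_*\to\mathcal A_*$ (or a subset thereof) as $\tilde\rho_*$ grows. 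Continuity of $\bar\rho\bar u$ in $\tilde\rho_*$ is already available from Proposition~\ref{prop:MomentumContinuous}, so it suffices to prove the monotonicity separately on each region and then glue.

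First I would handle $(\hat\rho,\hat u)\in\operatorname{int}\mathcal C_*$: by Lemma~\ref{lemma:ExistenceFixedRho}(vi) (and the $\lambda_2\le 0$ part of the definition of $\mathcal C_*$) the trace equals $(\hat\rho,\hat u)$ itself when $(\hat\rho,\hat u)\in\{\lambda_2<0\}$, and equals the sonic point on $\mathcal R_2^-(\hat\rho,\hat u)$ when $\lambda_2<0$ fails — in both cases $\bar\rho\bar u$ does not depend on $\tilde\rho_*$, giving the claimed constancy. Next, for $(\hat\rho,\hat u)\in\mathcal B_*$ the trace is the subsonic state obtained by connecting $(\hat\rho,\hat u)$ back along its reversed $2$-wave curve to $\mathcal W_1(\tilde\rho_*,0)\cap\{\lambda_1<0<\lambda_2\}$; here the key is that $\bar\rho_0$, the point on $\mathcal W_1(\tilde\rho_*,0)$, increases with $\tilde\rho_*$, and then moving along the fixed reversed $2$-wave curve $\mathcal W_2^-(\hat\rho,\hat u)$ the momentum $\bar\rho\bar u$ is strictly increasing in the base density — this is exactly the content I would extract from the lemma giving $\mathrm{d}u_0/\mathrm{d}\rho_0|_{\mathcal W_2^-}>0$, since along that curve both $\rho_0$ and $u_0$ increase together so $\rho_0 u_0$ does too (with attention to the sign of $u_0$; when $u_0<0$ one still has the product increasing because $|u_0|$ shrinks while $\rho_0$ grows — this needs the explicit shock relation). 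Finally for $(\hat\rho,\hat u)\in\mathcal A_*$, Lemma~\ref{lemma:ExistenceFixedRho}(iii) pins the trace to the unique point of $\{\lambda_1=0\}\cap\mathcal R_1(\tilde\rho_*,0)$, i.e. $(\rho_\alpha,u_\alpha)$, and I would use Lemma~\ref{lemma:PStaralong2Eigenvalue} (which shows $R_*$, hence $\tilde\rho_*$, is strictly monotone along $\{\lambda_1=0\}$) to conclude that $\rho_\alpha$ and $u_\alpha=\sqrt{\kappa\gamma}\rho_\alpha^\theta$ both strictly increase with $\tilde\rho_*$, so their product does.

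The main obstacle, I expect, is the bookkeeping at the interfaces: showing that the one-sided limits from adjacent regions agree is covered by Proposition~\ref{prop:MomentumContinuous}, but I still need to verify that strict monotonicity in the interiors of $\mathcal A_*$ and $\mathcal B_*$ upgrades to monotonicity (not necessarily strict) across the closure, and in particular that on $\mathcal J_*$ (where $\bar\rho$ and $\bar u$ are individually discontinuous) the product $\bar\rho\bar u$ still varies monotonically — this is where the identity $\bar\rho\bar u=\hat\rho\hat u$ for $\rho>\tilde\rho_*$ from the proof of Proposition~\ref{prop:MomentumContinuous} must be combined with the $\mathcal B_*$-side estimate. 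The secondary technical point is the sign analysis of $\mathrm{d}(\rho_0 u_0)/\mathrm{d}\rho_0$ along $\mathcal W_2^-$ in the shock part when $u_0<0$; there I would write $\rho_0 u_0$ explicitly via $\mathcal S_2$ and differentiate, or alternatively argue that the boundary momentum on a subsonic state is a known monotone "momentum related coupling constant" in the sense of Reigstad~\cite{Re2015}, invoking that framework to shortcut the computation.
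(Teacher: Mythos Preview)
Your decomposition into the regions $\mathcal A_*,\mathcal B_*,\mathcal C_*$ and the treatment of $\mathcal A_*$ and $\mathcal C_*$ match the paper's proof; the paper solves for $(\rho_\alpha,u_\alpha)$ explicitly and differentiates $\bar\rho\bar u=\sqrt{\gamma\kappa}\,(2/(\gamma+1))^{(\gamma+1)/(\gamma-1)}\tilde\rho_*^{(\gamma+1)/2}$ directly rather than routing through Lemma~\ref{lemma:PStaralong2Eigenvalue}, but this is cosmetic.

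The one place your reasoning is actually wrong (not merely incomplete) is the shock branch of the $\mathcal B_*$ case. The heuristic ``$|u_0|$ shrinks while $\rho_0$ grows, hence $\rho_0 u_0$ increases'' is invalid: when $u_0<0$ the product $\rho_0 u_0=-\rho_0|u_0|$ involves one growing and one shrinking positive factor, so its sign of variation is undetermined by that observation. The paper resolves this by a direct computation along $\mathcal S_2^-(\hat\rho,\hat u)$ that rewrites
\[
\ddfrac{(\bar\rho\,\bar u)}{\bar\rho}\Big|_{\mathcal S_2^-}
=\lambda_2(\bar\rho,\bar u)+\frac{\sqrt{\kappa}}{2\bar\rho\,\hat\rho\,(\bar u-\hat u)}\Bigl[\sqrt{\gamma\bar\rho^\gamma(\bar\rho-\hat\rho)}-\sqrt{\hat\rho(\bar\rho^\gamma-\hat\rho^\gamma)}\Bigr]^2,
\]
which is manifestly $>\lambda_2>0$ since $\bar u>\hat u$ on the reversed $2$-shock. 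Completing this square after inserting the Rankine--Hugoniot relation is the non-obvious step; your fallback of invoking Reigstad's monotone-coupling-constant framework \cite{Re2015} would also work. A smaller gap: the claim ``$\bar\rho$ increases with $\tilde\rho_*$'' in $\mathcal B_*$ is not a consequence of $\mathrm du_0/\mathrm d\rho_0|_{\mathcal W_2^-}>0$ alone --- it is precisely Lemma~\ref{lemma:PStaralong2Wave} (monotonicity of $R_*$ along $\mathcal W_2^-$), which you should cite here.
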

\begin{proof}
\textit{Step 1:} We consider the case $(\hat\rho_k,\hat u_k)\in \operatorname{int}\mathcal A_*$, $\tilde\rho_*>0$. We already know that $(\bar\rho_k,\bar u_k)\in\mathcal R_1(\tilde\rho_*,0)\cap\{\lambda_1=0\}$. The formulas for $\mathcal R_1(\tilde\rho_*,0)$ and $\lambda_1$ lead to
\begin{equation*}
\frac{\mathrm{d}(\bar\rho_k \bar u_k)}{\mathrm{d} \tilde\rho_*}
=\frac{\mathrm d}{\mathrm{d}\tilde\rho_*}\left(\sqrt{\gamma\kappa}\left(\frac{2}{\gamma+1} \right)^{\frac{\gamma+1}{\gamma-1}}\tilde\rho_*^{\frac{\gamma+1}{2}}\right)
=\sqrt{\kappa\gamma}\left(\frac{2}{\gamma+1}\right)^{\frac{2}{\gamma-1}}\tilde\rho_*^{\frac{\gamma-1}{2}}>0.
\end{equation*}
\textit{Step 2:} Next, we consider the case $(\hat\rho_k,\hat u_k)\in \mathcal B_*$, $\tilde\rho_*>0$. $(\bar\rho_k,\bar u_k)$ is the unique element in $\mathcal W_2(\hat\rho_k,\hat u_k)\cap \mathcal W_1(\tilde\rho_*,0)$. Since Lemma \ref{lemma:PStaralong2Wave}, we have 
\begin{equation*}
\frac{\mathrm{d} R_*}{\mathrm{d}\rho_0}\Big|_{\mathcal W_2^-}>0,
\end{equation*}
but this implies 
\begin{equation*}
\ddfrac{\bar\rho_k}{\tilde\rho_*}=\ddfrac{R_*^{-1}(\tilde\rho_*)}{\tilde\rho_*}>0,
\end{equation*}
where $R_*^{-1}(\tilde\rho_*)$ is the unique element in $\mathcal W_1(\tilde\rho_*,0)\cap \mathcal W_2(\hat\rho_k,\hat u_k)$. By chain rule, we get
\begin{equation*}
\ddfrac{(\bar\rho_k \bar u_k)}{\tilde\rho_*}=\ddfrac{(\bar\rho_k \bar u_k)}{\bar\rho_k}\cdot \ddfrac{\bar\rho_k}{\tilde\rho_*}.
\end{equation*}
Therefore, it remains to prove 
\begin{equation*}
\ddfrac{(\bar\rho_k \bar u_k)}{\bar\rho_k}\Big|_{\mathcal W_2^-}>0.
\end{equation*}
On $\mathcal R_2^-(\hat \rho_k,\hat u_k)$, we have
\begin{align*}
\ddfrac{(\bar\rho_k \bar u_k)}{\bar\rho_k}=&\bar u_k+\bar \rho_k\ddfrac{\bar u_k}{\bar\rho_k}=\bar u_k+\sqrt{\kappa\gamma}\bar \rho_k^{\theta}=\lambda_2(\bar \rho_k,\bar u_k)>0,
\end{align*}
since $(\bar \rho_k,\bar u_k)\in\mathcal B_*$ and Lemma \ref{lemma:ExistenceFixedRho}.
On $\mathcal S_2^-(\hat \rho_k,\hat u_k)$, we use the fact that $\lambda_2(\bar\rho_k,\bar u_k)>0$ and get
\begin{align*}
\ddfrac{(\bar\rho_k \bar u_k)}{\bar\rho_k}&=\bar u_k+\bar \rho_k\ddfrac{\bar u_k}{\bar\rho_k}\\
&=\bar u_k+\frac{\sqrt{\kappa}}{2\bar\rho_k \hat\rho_k(\bar u_k-\hat u_k)}\left[\gamma\bar\rho_k^\gamma(\bar\rho_k-\hat\rho_k)+\hat\rho_k(\bar\rho_k^\gamma-\hat\rho_k^\gamma) \right]\\
&=\lambda_2(\bar\rho_k,\bar u_k)-\sqrt{\kappa\gamma}\bar\rho_k^\theta+\frac{\sqrt{\kappa}}{2\bar\rho_k \hat\rho_k(\bar u_k-\hat u_k)}\big[\gamma\bar\rho_k^\gamma(\bar\rho_k-\hat\rho_k)+\hat\rho_k(\bar\rho_k^\gamma-\hat\rho_k^\gamma) \big]\\
&=\lambda_2(\bar\rho_k,\bar u_k)+\frac{\sqrt{\kappa}}{2\bar\rho_k \hat\rho_k(\bar u_k-\hat u_k)}\\
&\cdot\left[ \gamma\bar\rho_k^\gamma(\bar\rho_k-\hat\rho_k)+\hat\rho_k(\bar\rho_k^\gamma-\hat\rho_k^\gamma)-2\sqrt{\gamma\bar\rho_k^\gamma(\bar\rho_k-\hat\rho_k)}\sqrt{\hat\rho_k (\bar\rho_k^\gamma-\hat\rho_k^\gamma)}\right]\\
&=\lambda_2(\bar\rho_k,\bar u_k)+\frac{\sqrt{\kappa}}{2\bar\rho_k \hat\rho_k(\bar u_k-\hat u_k)} \left[\sqrt{\gamma\bar\rho_k^\gamma(\bar\rho_k-\hat\rho_k)}-\sqrt{\hat\rho_k(\bar\rho_k^\gamma-\hat\rho_k^\gamma)}\right]^2\\
&>\lambda_2(\bar\rho_k,\bar u_k)> 0,
\end{align*}
for $\bar\rho_k\neq\hat\rho_k$. For $(\bar \rho_k,\bar u_k)\in \mathcal W_1(\tilde\rho_*,0)$ the left and right limit of the derivatives along $\mathcal W^-_2(\bar \rho_k,\bar u_k)$ are strictly positive and strict monotonicity in $\mathcal B_*$ follows.\\
\textit{Step 3:} The strict monotonicity at the boundary between $\mathcal A_*$ and $\mathcal B_*$ can be shown by taking the left and right limit of the derivatives which are strictly positive.\\
\textit{Step 4:} Finally, we consider the case $(\hat\rho_k,\hat u_k)\in \mathcal C_*$. In the case $\lambda_2(\hat\rho_k,\hat u_k)\le 0$, we have $(\bar \rho_k,\bar u_k)=(\hat \rho_k,\hat u_k)$ and observe that $(\bar \rho_k,\bar u_k)$ is locally constant as a function of $\tilde\rho_*$.
For $\lambda_2(\hat\rho_k,\hat u_k)\ge 0$, we have
\begin{equation*}
\{(\bar\rho_k,\bar u_k)\}= \mathcal R_2(\hat\rho_k,\hat u_k)\cap \{\lambda_2 =0\}.
\end{equation*}
Therefore, $(\bar\rho_k,\bar u_k)$ is locally constant with respect to $\tilde\rho_*$.
\end{proof}

\begin{theorem}
Assume that the initial states $(\hat\rho_k,\hat u_k)\in D$ are given. Then, there exists a unique solution $(\rho_k,u_k)$ to the generalized Riemann problem according to Definition \ref{def:GeneralizedRiemann} with a unique artificial density $\rho_*\ge 0$.
\end{theorem}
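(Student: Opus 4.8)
The plan is to reduce the generalized Riemann problem to a scalar root-finding problem in the artificial density $\tilde\rho_*$ and then apply the intermediate value theorem, following the strategy already announced at the start of Section~\ref{sec:RiemannProblem}. For a fixed $\tilde\rho_* \ge 0$, Lemma~\ref{lemma:ExistenceFixedRho} produces, for each $k$, a unique half-space solution $(\rho_k,u_k)$ with well-defined trace $(\bar\rho_k,\bar u_k) = (\bar\rho_k,\bar u_k)(\tilde\rho_*)$. Define the total mass flux at the junction
\begin{equation*}
\Phi(\tilde\rho_*) := \sum_{k=1}^d A_k \bar\rho_k(\tilde\rho_*)\,\bar u_k(\tilde\rho_*).
\end{equation*}
By Definition~\ref{def:GeneralizedRiemann}, a solution with artificial density $\rho_*$ exists if and only if $\Phi(\rho_*) = 0$, and it is unique once $\rho_*$ is fixed; so the whole theorem comes down to showing $\Phi$ has a unique zero on $[0,\infty)$. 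Proposition~\ref{prop:MomentumContinuous} gives continuity of each $\bar\rho_k\bar u_k$ in $\tilde\rho_*$, hence $\Phi$ is continuous, and Proposition~\ref{prop:MomentumIncreasing} gives that each $\bar\rho_k\bar u_k$ is nondecreasing in $\tilde\rho_*$, hence $\Phi$ is nondecreasing.

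The next step is to pin down the behavior of $\Phi$ at the two ends of the parameter range. At $\tilde\rho_* = 0$ the artificial state is vacuum, every pipeline sees a Riemann problem with left state $(0,0)$, and the trace can only have $\bar u_k \le 0$ (gas can flow out of each pipeline into the junction but not in), so $\Phi(0) \le 0$. As $\tilde\rho_* \to \infty$ the artificial state becomes a high-density reservoir pushing gas into every pipeline; one checks that each $\bar\rho_k\bar u_k \to +\infty$ — for instance, for $(\hat\rho_k,\hat u_k) \in \mathcal A_*$ the explicit formula in Step~1 of the proof of Proposition~\ref{prop:MomentumIncreasing} already shows $\bar\rho_k\bar u_k \sim \sqrt{\kappa\gamma}\,(2/(\gamma+1))^{(\gamma+1)/(\gamma-1)}\tilde\rho_*^{(\gamma+1)/2} \to \infty$, and every state eventually falls into $\mathcal A_*$ as $\tilde\rho_*$ grows since the sonic curve $\{\lambda_1 = 0\}\cap\mathcal R_1(\tilde\rho_*,0)$ moves off to infinity. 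Hence $\Phi(\tilde\rho_*) \to +\infty$, and by the intermediate value theorem there is at least one $\rho_* \ge 0$ with $\Phi(\rho_*) = 0$.

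For uniqueness of the artificial density one cannot quite use strict monotonicity of $\Phi$ everywhere, because Proposition~\ref{prop:MomentumIncreasing} allows $\bar\rho_k\bar u_k$ to be locally constant when $(\hat\rho_k,\hat u_k) \in \operatorname{int}\mathcal C_*$. The argument is that the sets $\mathcal A_*,\mathcal B_*,\mathcal C_*$ depend on $\tilde\rho_*$ and, as $\tilde\rho_*$ increases, every initial state eventually leaves $\mathcal C_*$: once all $d$ states lie in $\mathcal A_* \cup \mathcal B_*$, $\Phi$ is strictly increasing there, so $\Phi$ cannot vanish on a nondegenerate interval (if it did, that interval would have to be a region where $\Phi$ is constant, forcing all states to stay in $\operatorname{int}\mathcal C_*$, where $\Phi < 0$, contradicting $\Phi(\rho_*)=0$). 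Combined with the fact that $\Phi$ is nondecreasing, this forces the zero set to be a single point. I expect the main obstacle to be precisely this uniqueness step — verifying that the "plateau'' region of $\Phi$ (corresponding to all states being supersonic-outgoing) sits strictly below zero and so cannot contain a root, and carefully handling the vacuum endpoint $\tilde\rho_* = 0$ and any state that is itself vacuum; the existence half is a routine IVT application once the continuity and monotonicity propositions are in hand.
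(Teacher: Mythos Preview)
Your approach is essentially the paper's: reduce to a scalar root-finding problem for the total mass flux $\Phi$, invoke Propositions~\ref{prop:MomentumContinuous} and~\ref{prop:MomentumIncreasing} for continuity and monotonicity, and apply the intermediate value theorem. The only notable difference is that the paper's uniqueness step is more direct than the one you anticipate as the obstacle: at any root $\rho_*$ one summand satisfies $\bar\rho_{k_0}\bar u_{k_0}\ge 0$, which forces $(\hat\rho_{k_0},\hat u_{k_0})\in\mathcal A_*\cup\mathcal B_*$ (states in $\mathcal C_*$ have strictly negative trace momentum), so by Proposition~\ref{prop:MomentumIncreasing} $\Phi$ is strictly increasing \emph{at} $\rho_*$ --- no plateau analysis needed. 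For existence the paper chooses finite $\rho_\pm$ as the extreme values of $\{\rho\ge 0:(\rho,0)\in\mathcal W_2^-(\hat\rho_k,\hat u_k)\text{ for all }k\}$ rather than your $0$ and $+\infty$, but either choice works.
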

\begin{proof}
For the solution $(\rho_k,u_k)$ obtained in Lemma \ref{lemma:ExistenceFixedRho} with artificial density $\tilde\rho_*\ge 0$, the mass production at the junction is given by 
\begin{equation*}
m(\tilde\rho_*)=\sum_{k=1}^d A_k \bar\rho_k \bar u_k.
\end{equation*}
Since Proposition \ref{prop:MomentumContinuous} and \ref{prop:MomentumIncreasing}, the function $m$ is continuous and increasing in $\tilde\rho_*$. We will use these properties to prove existence and uniqueness by the intermediate value theorem. We divide the rest of the proof in two steps.\\
\textit{Step 1:} We prove that there exist $0\le \rho_-<\rho_+$ such that
\begin{equation*}
m(\rho_-)\le 0\le m(\rho_+).
\end{equation*}
\begin{itemize}
	\item We set
	\begin{equation*}
	\rho_-=\operatorname{argmin}\;\{\rho\ge 0\,|\,(\rho,0)\in \mathcal W_2^-(\hat\rho_k,\hat u_k)\text{ for }k=1,\dots,d\}.
	\end{equation*}
	Then, we have $(\bar\rho_k,\bar u_k)\in\mathcal V(\rho_-,0)\cap \{\rho u\le 0\}$ for all $k=1,\dots,d$, but this implies $m(\rho_-)\le 0$.
	\item We set
	\begin{equation*}
	\rho_+=\operatorname{argmax}\;\{\rho\ge 0\,|\,(\rho,0)\in \mathcal W_2^-(\hat\rho_k,\hat u_k)\text{ for }k=1,\dots,d\}.
	\end{equation*}
	Then, we have $(\bar\rho_k,\bar u_k)\in \mathcal V(\rho_+,0)\cap \{\rho u\ge 0\}$ for all $k=1,\dots,d$, but this implies $m(\rho_+)\ge 0$.
\end{itemize}
By the intermediate value theorem, we conclude that there exists $\rho_*\ge 0$ such that $m(\rho_*)=0$.\\
\textit{Step 2:} We prove that $m$ is strictly increasing at $\rho_*$. Since $m(\rho_*)=0$, there exists $1\le k_0\le d$ such that $\bar\rho_{k_0}\bar u_{k_0}\ge 0$. Due to Proposition \ref{prop:MomentumIncreasing} and $(\bar\rho_{k_0},\bar u_{k_0})\in\mathcal A_*\cup\mathcal B_*$, $\bar\rho_{k_0}\bar u_{k_0}$ is strictly increasing with respect to $\tilde\rho_*$. Thus, $m$ is strictly increasing at $\rho_*$. This implies the uniqueness of the artificial density $\rho_*\ge 0$ with $m(\rho_*)=0$.\\
Since Riemann problems admit unique self-similar Lax solutions and $\rho_*$ is uniquely defined by $(\bar\rho_k,\bar u_k)$ with $\bar\rho_k \bar u_k\ge 0$, the solution to the generalized Riemann problem is unique.
\end{proof}

\section{The generalized Cauchy problem}
\label{sec:CauchyProblem}
In this section, we prove existence and uniqueness of solutions to the Cauchy problem. This result is based on a general existence theorem by Colombo, Herty and Sachers \cite{CHS2008} and holds true in a neighborhood of a subsonic solution. We also obtain Lipschitz continuous dependence on the initial data.

\begin{definition}\label{def:GeneralizedCauchyProblem}
Fix $(\hat\rho_1,\hat\rho_1\hat u_1,\dots,\hat\rho_d,\hat\rho_d\hat u_d)\in U^0+L^1(0,\infty)_x$ with $(\hat\rho_k,\hat\rho_k\hat u_k)\in D$ a.e. and $T\in(0,\infty]$. Then,  we call $(\rho_1,\rho_1 u_1,\dots,\rho_d,\rho_d u_d)\in C([0,T]_t,U^0+L^1(0,\infty)_x)$ a weak solution to the generalized Cauchy problem if $(\rho_k,\rho_k u_k)$, $k=1,\dots,d$ are weak entropy solutions to the isentropic gas equations and the following assertions hold true:
\begin{itemize}
		\item [CP0:] The solution satisfies the initial condition
			\begin{equation*}
				(\rho_k,u_k)(0+,x)=(\hat{\rho}_k,\hat{u}_k)(x)\in D,
			\end{equation*}
			 for a.e. $x>0,\,k=1,\dots,d$.
    \item [CP1:] For a.e. $t>0$, there exists $\rho_*(t)\ge 0$ such that 
			\begin{equation*}
				(\bar\rho_k,\bar u_k)(t)\in\mathcal V (\rho_*(t),0)
			\end{equation*}
			for all $k=1,\dots,d$.
    \item [CP2:] Mass is conserved at the junction
			\begin{equation*}
				\sum_{k=1}^d A_k\bar{\rho}_k\bar{u}_k=0,\quad\text{for a.e. }t>0.
			\end{equation*}
\end{itemize}
\end{definition}

\begin{theorem}
Fix a vector of subsonic states $U^0=( \rho^0_1,\rho_1^0 u^0_1,\dots,\rho^0_d,\rho_d^0 u_d^0)\in D^d$ such that the corresponding generalized Riemann problem admits a stationary solution. Then, there exist $\delta, L>0$ and a map $S\colon [0,\infty)\times \mathcal D\to\mathcal D$ such that
\begin{itemize}
	\item $\mathcal D \supset \{ U\in  U^0+L^1((0,\infty)_x,D^d)\, ;\, TV(U)\le \delta \}$;
	\item For $U\in \mathcal D$, $S_0 U=U$ and for $s,t\ge 0$, $S_s S_t U=S_{s+t}U$;
	\item For $U,V\in \mathcal D$ and $s,t\ge 0$, $\| S_t U-S_s V\|_{L^1}\le L (\|U-V\|_{L^1}+|t-s|)$;
	\item If $U\in\mathcal D$ piecewise constant, then for $t>0$ sufficiently small, $S_t U$ coincides with the juxtaposition of the solution to Riemann problems centered at the points of jumps or at the junction.
\end{itemize}
Moreover, for every $U\in\mathcal D$, the map $t\mapsto S_t U$ is a solution to the generalized Cauchy problem.
\end{theorem}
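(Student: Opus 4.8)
The plan is to verify that the new coupling condition falls within the abstract framework of Colombo, Herty and Sachers \cite{CHS2008} and then to invoke their existence and uniqueness theorem. The first task is to rewrite \textit{CP1}--\textit{CP2} as the vanishing of a $C^1$ map of the traces. Because $U^0$ is subsonic and the generalized Riemann problem with data $U^0$ has a stationary solution, $U^0$ itself satisfies the coupling condition, its junction traces coincide with $U^0$ and are therefore subsonic; by Lemma \ref{lemma:ExistenceFixedRho} (property (iv)) and the openness of $\mathcal B_*$ there is a neighbourhood $\mathcal U\subset D^d$ of $U^0$ on which data and induced traces all stay strictly subsonic, hence $\rho_*>0$. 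On $\mathcal U$ Definition \ref{def:PStar} is applicable, and by \eqref{eq:EqualArtificalDensity} the conditions \textit{CP1}--\textit{CP2} become
\[ R_*(\bar\rho_1,\bar u_1) = \cdots = R_*(\bar\rho_d,\bar u_d), \qquad \sum_{k=1}^d A_k\,\bar\rho_k\bar u_k = 0, \]
that is $\Pi(\bar\rho_1,\bar u_1,\dots,\bar\rho_d,\bar u_d)=0$ for the explicit map $\Pi$ collecting the $d-1$ differences of consecutive $R_*$-values and the mass residual; $\Pi$ is $C^1$ on $\mathcal U$ since $R_*$ and $\kappa\rho^\gamma$ are smooth away from vacuum for $1<\gamma<3$.

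The crucial step is the transversality (non-characteristic) hypothesis of \cite{CHS2008}: parametrising, for each $k$, the trace $(\bar\rho_k,\bar u_k)$ as the point reached from $(\hat\rho_k,\hat u_k)$ along the reversed $2$-Lax curve with strength $\sigma_k$, one must show that the $d\times d$ Jacobian $\partial_{(\sigma_1,\dots,\sigma_d)}\Pi$ is invertible at $U^0$. This is exactly where the monotonicity results of Section \ref{sec:RiemannProblem} enter. By Lemma \ref{lemma:PStaralong2Wave}, $\ddfrac{R_*}{\rho_0}\big|_{\mathcal W_2^-}>0$, so each $R_*(\bar\rho_k,\bar u_k)$ is a strictly increasing function of $\sigma_k$ alone; and the subsonic computation in Step~2 of the proof of Proposition \ref{prop:MomentumIncreasing} gives $\ddfrac{(\bar\rho_k\bar u_k)}{\bar\rho_k}\big|_{\mathcal W_2^-}>0$, so each flux $\bar\rho_k\bar u_k$ is also strictly increasing in $\sigma_k$. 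Hence $\partial_\sigma\Pi$ has a rigid sign structure — its first $d-1$ rows form a difference of two diagonal matrices with nonzero diagonal entries, and its last row has all entries of one sign — from which non-singularity follows by an elementary row-reduction/determinant argument; this is precisely the ``monotone momentum related coupling constant'' situation of \cite{Re2015}. By the implicit function theorem the junction Riemann solver is then well defined and Lipschitz (indeed $C^1$) in the data throughout $\mathcal U$, while its existence and uniqueness at $U^0$ itself is the generalized Riemann problem theorem of Section \ref{sec:RiemannProblem}.

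With a $C^1$ coupling relation, a solvable junction Riemann problem at $U^0$, and the transversality condition in hand, the abstract theorem of \cite{CHS2008} applies verbatim. One builds approximate solutions by wave front tracking \cite{Br2000}, using the classical Riemann solver at interior jumps and the junction Riemann solver above at $x=0$; the Lipschitz bound on the junction solver supplies the interaction estimates that control the total variation and the number of fronts for $\mathrm{TV}(U)\le\delta$, so a subsequence converges in $C([0,T];L^1)$ to a weak entropy solution satisfying \textit{CP0}--\textit{CP2}, i.e.\ to a solution of the generalized Cauchy problem in the sense of Definition \ref{def:GeneralizedCauchyProblem}. The semigroup identities $S_0=\mathrm{id}$ and $S_sS_t=S_{s+t}$, the uniform estimate $\|S_tU-S_sV\|_{L^1}\le L(\|U-V\|_{L^1}+|t-s|)$, and the local-in-time coincidence of $S_tU$ with the juxtaposition of Riemann solutions are all part of the conclusion of \cite{CHS2008}. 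I expect the main obstacle to be exactly the middle step: making rigorous that the implicit, $\rho_*$-mediated condition \textit{CP1}--\textit{CP2} genuinely matches the $C^1$-map-plus-transversality template of \cite{CHS2008} — in particular that subsonicity (hence validity of the $R_*$ reformulation) persists under small perturbations, and that the one-parameter monotonicity statements of Section \ref{sec:RiemannProblem}, proved as functions of $\tilde\rho_*$, really assemble into the required full-rank Jacobian. Everything else is a direct appeal to the cited results.
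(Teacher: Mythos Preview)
Your proposal is correct and follows essentially the same approach as the paper: both reformulate \textit{CP1}--\textit{CP2} in the subsonic regime via the explicit function $R_*$, package the coupling as a $C^1$ map whose rows are the mass residual and the $d-1$ consecutive differences of $R_*$, verify the transversality hypothesis of \cite{CHS2008} by invoking Lemma~\ref{lemma:PStaralong2Wave} and the subsonic Step~2 of Proposition~\ref{prop:MomentumIncreasing} to fix the sign pattern of the Jacobian, and conclude non-singularity by an elementary determinant argument (the paper phrases this as a Laplace expansion). Your concern about the ``middle step'' is unfounded: the paper handles it in one line by taking $\delta$ small enough that $\mathcal D$ stays subsonic, exactly as you do.
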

\begin{proof}
Since $(\hat\rho_k,\hat u_k)$ is subsonic, we can choose $\delta>0$ sufficiently small such that $\mathcal D$ is contained in the subsonic region. Therefore, \textit{CP1} is equivalent to
\begin{equation*}
R_*(\bar\rho_k,\bar u_k)(t)=\mathcal H_{R_*}(t),\quad \text{for } k=1,\dots ,d,
\end{equation*}
for a.e. $t>0$. Note, that $R_*$ is defined in Definition \ref{def:PStar}.
Next, we apply Theorem 3.2 in \cite{CHS2008}. Therefore, we define the function
\begin{equation*}
\Psi(U)=
\begin{pmatrix}
\sum_{k=1}^d A_k \rho_k u_k\\
R_*(\rho_1,u_1)-R_*(\rho_2,u_2)\\
\vdots\\
R_*(\rho_{d-1},u_{d-1})-R_*(\rho_d,u_d)
\end{pmatrix}.
\end{equation*}
It remains to prove the transversality condition
\begin{equation}\label{eq:TransversalityCauchy}
\operatorname{det}\begin{bmatrix}D_1\Psi( U^0)\cdot r_2(\rho_1^0,u_1^0) &\cdots& D_d\Psi( U^0)\cdot r_d(\rho_d^0,u_d^0)\end{bmatrix}\neq 0,
\end{equation}
where $D_k=D_{(\rho_k,\rho_k u_k)}$.
By Lemma \ref{lemma:PStaralong2Wave} and the proof of Proposition \ref{prop:MomentumIncreasing}, we get
\begin{align*}
\frac{\partial \rho u}{\partial \rho}\Big|_{\mathcal W^-_2}>0\quad \text{and} \quad\frac{\partial R_*}{\partial \rho}\Big|_{\mathcal W^-_2}>0
\end{align*}
in the subsonic region. We deduce that
\begin{align*}
D_k(\rho_k u_k)\cdot r_2(\rho_k ,u_k)>0\quad\text{and}\quad D_k R_*(\rho_k, u_k)\cdot r_2(\rho_k, u_k)>0.
\end{align*}
This implies that the matrix involved in (\ref{eq:TransversalityCauchy}) has components with fixed sign which are given by
\begin{equation*}\label{eq:SignsCellsMatrixTransversality}
\begin{pmatrix}
+  & +     & +     &\cdots &    \cdots      & +  \\
+  & -     & 0      & \cdots &          \cdots   & 0  \\
0  & + & - & \ddots &       &\vdots  \\
\vdots & \ddots & \ddots & \ddots & \ddots&\vdots   \\
\vdots &  & \ddots & \ddots & \ddots&0   \\
0   &\dots& \dots & 0 & + & -   
\end{pmatrix}.
\end{equation*}
A Laplace expansion implies that the determinant of this matrix has a fixed sign and is non-zero.
\end{proof}

\begin{remark}
The existence and uniqueness result is restricted to subsonic initial data with sufficiently small total variation. The global result for the generalized Riemann problem and the large amount of inequalities for entropy fluxes at the junction (Propsition \ref{prop:EntropyDissipationJunction}) motivate to prove a more general result. Notice that the method in \cite{Ho2020} based on compensated compactness can be applied to the kinetic coupling condition (\ref{eq:DefOptimalCoupling}). This result justifies the relaxation in the interior of the pipelines. Nevertheless, it is open how the traces relax at the junction and if the obtained macroscopic solution satisfies the coupling condition in Definition \ref{def:GeneralizedCauchyProblem}.
\end{remark}
\section{Energy/Entropy dissipation at the junction}
\label{sec:PropertiesCoupling}
In this section, we prove some physical properties of the coupling condition. In particular, we prove that energy is non-increasing at the junction, a relation for the stagnation enthalpy and a maximum principle on the Riemann invariants.
\begin{proposition}\label{prop:EntropyDissipationJunction}
Assume that initial states $(\hat\rho_k,\hat u_k)\in D$ are given. Let $(\rho_k,u_k)$ be the solution to the generalized Riemann or Cauchy problem. Then, 
\begin{equation*}
\sum_{k=1}^d A_k G_S(\bar\rho_k,\bar u_k)\le 0,
\end{equation*}
for every convex $S\colon\mathbb R\to\mathbb R$ with $S(v)=S(-v)$.
\end{proposition}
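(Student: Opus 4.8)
The plan is to exploit the structure given by Lemma~\ref{lemma:ExistenceFixedRho}, namely that each half-space solution $(\rho_k,u_k)$ is the restriction to $x>0$ of the Lax solution of the Riemann problem with left state $(\rho_*,0)$ and right state $(\hat\rho_k,\hat u_k)$, together with the fundamental fact that the left boundary data has \emph{zero velocity} and that for symmetric $S$ one has $G_S(\rho_*,0)=0$ and $\eta_{S,\rho u}'(\rho_*,0)=0$ (the latter was already invoked in the closing paragraph of Section~\ref{sec:MotivationandDerivation}). First I would record that since $(\bar\rho_k,\bar u_k)\in\mathcal V(\rho_*,0)\subset\mathcal E(\rho_*,0)$ by Proposition~\cite[Theorem 3.4]{KSX1997}, the entropy inequality for the boundary value problem gives, for every $C^1$ convex symmetric $S$,
\begin{equation*}
G_S(\bar\rho_k,\bar u_k)-G_S(\rho_*,0)-\eta_S'(\rho_*,0)\bigl(F(\bar\rho_k,\bar u_k)-F(\rho_*,0)\bigr)\le 0.
\end{equation*}

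Next I would simplify this using $u_*=0$. Because $S$ is symmetric, $G_S(\rho_*,0)=\int \xi\,\chi(\rho_*,\xi)S(\xi)\,\mathrm d\xi=0$ (odd integrand) and $F(\rho_*,0)=(0,\kappa\rho_*^\gamma)$. Writing $\eta_S'(\rho_*,0)=(\alpha,0)$ — the second component vanishes by the explicit gradient formula with $u=0$ and $S'$ odd — the middle term collapses to $\alpha\cdot\bar\rho_k\bar u_k$. Thus for each $k$,
\begin{equation*}
G_S(\bar\rho_k,\bar u_k)\le \alpha\,\bar\rho_k\bar u_k,
\end{equation*}
where the constant $\alpha=\eta_{S,\rho}'(\rho_*,0)$ is the \emph{same} for all $k$ (it depends only on $\rho_*$, which is common to all pipelines). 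Multiplying by $A_k>0$, summing over $k$, and invoking conservation of mass \textit{RP2}/\textit{CP2}, the right-hand side telescopes to $\alpha\sum_k A_k\bar\rho_k\bar u_k=0$, which yields the claim for $C^1$ convex symmetric $S$. The general (merely convex) case follows by approximating $S$ by a monotone sequence of smooth convex symmetric functions and passing to the limit in the integral representations of $\eta_S$, $G_S$ (both are continuous in $S$ uniformly on compact subsets of $D$, and on the vacuum $\bar\rho_k=0$ both sides vanish).

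The main obstacle is the justification that $\bar\rho_k\bar u_k$, $G_S(\bar\rho_k,\bar u_k)$ and $\eta_S'(\rho_*,0)\,F(\bar\rho_k,\bar u_k)$ are all well-defined a.e.\ in $t$ for the Cauchy problem — i.e.\ that the traces exist and the algebraic manipulation above is legitimate at the level of weak solutions rather than just for the self-similar Riemann data. For the generalized Riemann problem this is immediate since the solution is self-similar and the trace is the constant boundary state from Lemma~\ref{lemma:ExistenceFixedRho}; for the Cauchy problem one uses that $S_tU$ is, for small $t$, the juxtaposition of Riemann solutions at the junction (the last bullet of the Cauchy theorem), so the trace identity holds for a.e.\ $t$ by the semigroup property and the Riemann case. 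A secondary technical point is checking that the vanishing of the second component of $\eta_S'(\rho_*,0)$ and of $G_S(\rho_*,0)$ persists in the vacuum limit $\rho_*=0$, which is clear since $\chi(0,\cdot)\equiv 0$ makes all the relevant integrals zero.
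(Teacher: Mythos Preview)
Your argument is correct and follows essentially the same route as the paper's proof: use $(\bar\rho_k,\bar u_k)\in\mathcal V(\rho_*,0)\subset\mathcal E(\rho_*,0)$ to obtain the boundary entropy inequality, exploit the symmetry of $S$ to kill $G_S(\rho_*,0)$ and the second component of $\eta_S'(\rho_*,0)$, then sum against $A_k$ and invoke mass conservation, finishing by approximation for non-smooth $S$. Your additional remarks on trace existence for the Cauchy problem and the vacuum case $\rho_*=0$ are more detailed than what the paper writes, but do not change the substance of the argument.
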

\begin{proof}
The case $\rho_*=0$ is trivial. For $\rho_*\neq 0$ fix $S\in C^1(\mathbb R,\mathbb R)$ with $S(v)=S(-v)$. Since $(\bar\rho_k,\bar u_k)\in\mathcal V(\rho_*,0)\subset\mathcal E(\rho_*,0)$, we have 
\begin{equation*}
G_S(\bar\rho_k,\bar u_k)-G_S(\rho_*,0)-\eta_S'(\rho_*,0)\left(F(\bar\rho_k,\bar u_k)-F(\rho_*,0)\right)\le 0.
\end{equation*}
Notice that
\begin{align*}
G_S(\rho_*,0)&=\int_{\mathbb R}\theta v \chi(\rho_*,v)S(v)\,\mathrm{d}v=0, \quad \text{and}\\
\partial_{\rho u}\eta_{S}(\rho_*,0)&=\frac{1}{J_\lambda}\int_{-1}^1 (1-z^2)^\lambda S'(a_\gamma\rho^\theta_* z)\mathrm{d}z=0,
\end{align*}
since the integrands are anti-symmetric. These observations together with conservation of mass at the junction give 
\begin{align*}
0&\ge \sum_{k=1}^d A_k \left[ G_S(\bar\rho_k,\bar u_k)-G_S(\rho_*,0)-\eta_S'(\rho_*,0)(F(\bar \rho_k,\bar u_k)-F(\rho_*,0))\right]\\
&=\sum_{k=1}^d A_k  G_S(\bar\rho_k,\bar u_k)-\partial_\rho\eta_{S}(\rho_*,0)\left(\sum_{k=1}^d A_k\bar \rho_k\bar u_k\right)\\
&=\sum_{k=1}^d A_k  G_S(\bar\rho_k,\bar u_k).
\end{align*}
An approximation argument leads to the result for general convex functions $S$. 
\end{proof}

\begin{corollary}[Non-increasing energy]\label{corr:NonIncreasingEnergy}
Assume that sufficiently regular initial data $(\hat\rho_k,\hat u_k)$ are given. Let $(\rho_k,u_k)$ be the solution to the generalized Riemann or Cauchy problem. Then, the following properties hold true:
\begin{enumerate}[label=(\roman*)]
	\item At the junction energy is non-increasing, i.e.
		\begin{equation*}
			\sum_{k=1}^d A_k G(\bar\rho_k,\bar u_k)\le 0, \quad \text{for a.e. }t>0.
		\end{equation*}
	\item At the junction the traces of the stagnation enthalpy
		\begin{equation*}
			h(\rho,u)=\frac{u^2}{2}+\frac{\kappa\gamma}{\gamma-1}\rho^{\gamma-1}
		\end{equation*}
		are related by
		\begin{equation*}
			h(\bar\rho_k,\bar u_k)\le h(\rho_*,0)\le h(\bar\rho_l,\bar u_l),
		\end{equation*}
			for $\bar u_l\le 0\le \bar u_k, 1\le k,l\le d$, for a.e. $t>0$.
\end{enumerate}
\end{corollary}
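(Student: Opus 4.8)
The plan is to derive both statements from Proposition~\ref{prop:EntropyDissipationJunction} by specializing the symmetric convex function $S$. For part (i), the physical energy $\eta$ corresponds to the kinetic entropy parametrized by $S(v)=v^2/2$, which is convex and even; hence Proposition~\ref{prop:EntropyDissipationJunction} applied with this $S$ gives directly $\sum_{k=1}^d A_k G(\bar\rho_k,\bar u_k)\le 0$, where $G$ is the physical energy flux. So part (i) is essentially a one-line consequence once one identifies $G_S=G$ for this choice of $S$.

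For part (ii), the idea is to approximate the (non-smooth, but still convex and even) function $S(v)=\tfrac12|v|$ by smooth symmetric convex functions, or to compute $G_S$ and $F$ directly for $S(v)=|v|$ and use the density/approximation argument already invoked in the proof of Proposition~\ref{prop:EntropyDissipationJunction}. The point is that the entropy flux $G_S$ associated with $S(v)=|v|$ should reduce, up to positive constants, to something controlling the sign of $\bar u_k$ times the stagnation enthalpy; more precisely one expects $G_S(\rho,u)$ to be a positive multiple of $\operatorname{sgn}(u)\,h(\rho,u)\,\rho^{\ldots}$ type quantity, or one works with the pair $S_\pm(v)=(\pm v)_+$ whose combination recovers both $|v|$ and $v$. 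Taking $S(v)=(v)_+ = \max(v,0)$ is convex but not even; however $S(v)+S(-v)=|v|$ is even, and since the coupling condition also conserves mass (so that the flux associated with the linear part $S(v)=v$ contributes $\sum_k A_k\bar\rho_k\bar u_k=0$), one can still extract the one-sided information. Concretely, I would evaluate $\int_{\mathbb R}\xi H_S(M(\bar\rho_k,\bar u_k,\xi),\xi)\,\mathrm d\xi$ for $S(v)=(v-c)_+$ with a free parameter $c$, differentiate in $c$, and read off the stagnation-enthalpy comparison from the resulting inequality restricted to the sign of $\bar u_k$.

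The cleanest route is probably: first establish part (i) as above; then for part (ii), for each fixed $k$ with $\bar u_k\ge 0$ apply Proposition~\ref{prop:EntropyDissipationJunction} not to the sum but to the single-pipe inequality $G_S(\bar\rho_k,\bar u_k)-G_S(\rho_*,0)-\eta_S'(\rho_*,0)(F(\bar\rho_k,\bar u_k)-F(\rho_*,0))\le 0$ (valid since $(\bar\rho_k,\bar u_k)\in\mathcal V(\rho_*,0)\subset\mathcal E(\rho_*,0)$), choosing $S$ so that this single inequality is equivalent to $h(\bar\rho_k,\bar u_k)\le h(\rho_*,0)$. Since $h$ is itself, up to affine normalization, the Riemann-invariant-type quantity $\tfrac{u^2}{2}+\tfrac{\kappa\gamma}{\gamma-1}\rho^{\gamma-1}$, and since $G_S,\eta_S',F$ all have explicit integral representations, the task is to pick $S$ (or a limit of $S$'s) making the left-hand side proportional to $h(\bar\rho_k,\bar u_k)-h(\rho_*,0)$ with a nonnegative proportionality factor depending on $\operatorname{sgn}(\bar u_k)$. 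The symmetric case $\bar u_l\le 0$ then follows by the reflection $u\mapsto -u$, under which $h$ is invariant and the roles of incoming/outgoing are swapped.

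The main obstacle I anticipate is part (ii): identifying the correct $S$ (or family $S_\epsilon$) and verifying that the resulting entropy flux inequality genuinely collapses to the stagnation-enthalpy comparison rather than to some weaker bound. One must carefully exploit both $G_S(\rho_*,0)=0$ and $\eta_{S,\rho u}'(\rho_*,0)=0$ for even $S$ (already noted in the excerpt), so that only the $\rho$-derivative $\eta_{S,\rho}'(\rho_*,0)$ and the single momentum trace $\bar\rho_k\bar u_k$ survive; the sign of $\bar u_k$ is what decides whether the surviving term helps or hurts the desired inequality. Making this bookkeeping rigorous, including the passage from smooth $S$ to the relevant non-smooth limit, is where the real work lies; the rest is substitution into the explicit formulas for $\chi$, $M$, and $\eta_S'$ recorded in Section~\ref{sec:IsentropicGas}.
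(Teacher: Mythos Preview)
Your treatment of part (i) is correct and identical to the paper's: apply Proposition~\ref{prop:EntropyDissipationJunction} with $S(v)=v^2/2$.

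For part (ii) you have located the right starting point --- the single-pipe boundary inequality coming from $(\bar\rho_k,\bar u_k)\in\mathcal V(\rho_*,0)\subset\mathcal E(\rho_*,0)$ --- but you then go hunting for an exotic $S$ (such as $|v|$ or $(v-c)_+$) when in fact the \emph{same} choice $S(v)=v^2/2$ already does the job. The point you are missing is purely algebraic: for the physical energy one has
\[
G(\rho,u)=\frac{\rho u^3}{2}+\frac{\gamma\kappa}{\gamma-1}\rho^\gamma u=\rho u\cdot h(\rho,u),
\qquad
\partial_\rho\eta(\rho_*,0)=\frac{\gamma\kappa}{\gamma-1}\rho_*^{\gamma-1}=h(\rho_*,0).
\]
Since moreover $G(\rho_*,0)=0$ and $\partial_{\rho u}\eta(\rho_*,0)=0$, the single-pipe $\mathcal E$-inequality with $S(v)=v^2/2$ reduces exactly to
\[
\bar\rho_k\bar u_k\bigl(h(\bar\rho_k,\bar u_k)-h(\rho_*,0)\bigr)\le 0,
\]
and dividing by $\bar\rho_k\bar u_k$ (with the sign of $\bar u_k$ determining the direction of the inequality) gives (ii) immediately. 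This is precisely the paper's argument. Your proposed detour through non-smooth or one-sided $S$ is not wrong in spirit, but it is unnecessary work and, as you yourself flag, the bookkeeping would be delicate; the two identities above remove the obstacle entirely.
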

\begin{proof}
Applying Proposition \ref{prop:EntropyDissipationJunction} to $S(v)=v^2/2$ gives
\begin{equation*}\label{eq:proofEnergyDissipationJunction}
	\sum_{k=1}^d A_k G(\bar\rho_k,\bar u_k)\le 0.
\end{equation*}
Since $(\rho_k,u_k)\in\mathcal V(\rho_*,0)\subset\mathcal E(\rho_*,0)$, we have
\begin{equation*}
G(\bar\rho_k,\bar u_k)-\partial_\rho \eta(\rho_*,0)\bar\rho_k\bar u_k\le 0.
\end{equation*}
The result follows from dividing the inequality by $\bar\rho_k\bar u_k\neq 0$ and the fact $\partial_\rho \eta(\rho_*,0)=h(\rho_*,0)$. The cases $\bar\rho_k\bar u_k=0$ and $\rho_*=0$ are trivial.
\end{proof}

\begin{corollary}[Maximum principle]
Let $(\rho_k,u_k)\colon(0,\infty)_t\times(0,\infty)_x\to D$ be the solution to the generalized Riemann or Cauchy problem with initial condition $(\hat\rho_k,\hat u_k)$. Assume that
\begin{equation*}
	-\omega_M\le\omega_1(\hat\rho_k,\hat u_k)(x)<\omega_2(\hat\rho_k,\hat u_k)(x)\le\omega_M,
\end{equation*}
for a.e. $x>0$, $k=1,\dots,d$. Then, we have 
\begin{equation*}
	-\omega_M\le\omega_1(\rho_k, u_k)(t,x)<\omega_2(\rho_k, u_k)(t,x)\le\omega_M,
\end{equation*}
for a.e. $t,x>0,\, k=1,\dots,d$.
\end{corollary}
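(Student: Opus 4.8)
The plan is to combine the maximum principle that holds for each half-space (standard Riemann solution) problem with the fact, established in Lemma~\ref{lemma:ExistenceFixedRho}, that each edge solution is the restriction to $x>0$ of a standard Riemann solution whose left datum is the artificial state $(\rho_*(t),0)$ (in the Cauchy case, a local-in-time juxtaposition of such solutions and interior Riemann solutions). For a single conservation law in one variable the Riemann invariants $\omega_1,\omega_2$ satisfy an invariant-region principle: the rarefaction and shock curves are such that $\omega_1$ is non-increasing and $\omega_2$ is non-decreasing along admissible $2$-waves and vice versa along $1$-waves, so the pair $(\omega_1,\omega_2)$ of the solution always lies in the smallest box $[\,\min\omega_1,\ \max\omega_2\,]$ determined by the initial and boundary data. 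Hence the only genuinely new ingredient is to control the Riemann invariants of the artificial left state $(\rho_*,0)$.

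First I would record the elementary bound on the artificial state: since $u_*=0$, its Riemann invariants are $\omega_1(\rho_*,0)=-a_\gamma\rho_*^\theta$ and $\omega_2(\rho_*,0)=a_\gamma\rho_*^\theta$, which are symmetric about $0$. So it suffices to prove $a_\gamma\rho_*^\theta\le\omega_M$, i.e. that the artificial density is not too large. Here I would use Proposition~\ref{prop:EntropyDissipationJunction}, or more directly Corollary~\ref{corr:NonIncreasingEnergy}(ii): for any incoming edge with $\bar u_k\ge 0$ we have $h(\rho_*,0)\le h(\bar\rho_k,\bar u_k)$, that is $\tfrac{\kappa\gamma}{\gamma-1}\rho_*^{\gamma-1}\le \tfrac{\bar u_k^2}{2}+\tfrac{\kappa\gamma}{\gamma-1}\bar\rho_k^{\gamma-1}$, and the right-hand side is controlled by the Riemann invariants of the trace $(\bar\rho_k,\bar u_k)$, which (by the already-established maximum principle on each half-space problem with \emph{known} upper bound coming from $\hat\rho_k,\hat u_k$ and the left state) is in turn bounded in terms of $\omega_M$. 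Since $\sum_k A_k\bar\rho_k\bar u_k=0$ and not all edges can have $\bar u_k<0$ (else mass is produced), there is always at least one $k$ with $\bar\rho_k\bar u_k\ge0$ to which this applies. Converting $h$-bounds and $\omega$-bounds into each other is a direct computation: $h(\rho,u)=\tfrac12 u^2+\tfrac{\kappa\gamma}{\gamma-1}\rho^{\gamma-1}$ and $a_\gamma\rho^\theta=\tfrac{\omega_2-\omega_1}{2}$, $u=\tfrac{\omega_1+\omega_2}{2}$, so $h$ is a fixed increasing function of $\max(|\omega_1|,|\omega_2|)$ on the region $\omega_1<\omega_2$.

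With $a_\gamma\rho_*^\theta\le\omega_M$ in hand, I would close the argument as follows. Fix $t$; the edge solution on pipe $k$ for $x>0$ coincides with a standard Riemann solution between $(\rho_*(t),0)$ and $(\hat\rho_k,\hat u_k)$ (or, in the Cauchy setting, with a finite juxtaposition of standard Riemann solutions whose data are either adjacent trace states of the approximate solution or the junction pair $(\rho_*,(\hat\rho_k,\hat u_k))$). Apply the scalar-system invariant-region principle: every state appearing in such a solution has $\omega_1\ge\min\{\omega_1(\rho_*,0),\ \omega_1(\hat\rho_k,\hat u_k)\}\ge\min\{-\omega_M,-\omega_M\}=-\omega_M$ and likewise $\omega_2\le\omega_M$; the strict inequality $\omega_1<\omega_2$ is preserved because $D$ is invariant. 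Propagating this through the wave-front-tracking construction and passing to the limit (the $L^1$ estimate of the previous section guarantees convergence and the bounds are closed under $L^1_{\mathrm{loc}}$ limits since they are pointwise a.e.\ constraints on a closed set) yields the claim for a.e.\ $t,x>0$.

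The main obstacle I expect is the circularity in the second paragraph: the bound on $\rho_*$ is derived from bounds on the traces $(\bar\rho_k,\bar u_k)$, but those traces are themselves part of the solution. The clean way around it is to run the argument edgewise and instantaneously: for the \emph{half-space} problem with \emph{prescribed} left state $(\tilde\rho_*,0)$ the maximum principle (scalar invariant regions) gives $\max(|\omega_i(\bar\rho_k,\bar u_k)|)\le\max(a_\gamma\tilde\rho_*^\theta,\omega_M)$ \emph{unconditionally}; feeding this into $h(\rho_*,0)\le h(\bar\rho_k,\bar u_k)$ for the (existing) edge with $\bar\rho_k\bar u_k\ge0$ gives $a_\gamma\rho_*^\theta\le\max(a_\gamma\rho_*^\theta,\omega_M)$, which is vacuous unless one first shows $\rho_*$ cannot exceed the level set determined by $\omega_M$. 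This is resolved by a monotonicity/continuation argument in $\tilde\rho_*$: by Proposition~\ref{prop:MomentumIncreasing} the mass flux $m(\tilde\rho_*)$ is increasing, and at the threshold value $\tilde\rho_*$ with $a_\gamma\tilde\rho_*^\theta=\omega_M$ one checks directly that $m(\tilde\rho_*)\ge0$ (every trace then has $\bar u_k\le\omega_M\le$ the sonic speed built from $\tilde\rho_*$, forcing non-negative outflow on each pipe) so the root $\rho_*\le$ that threshold, i.e. $a_\gamma\rho_*^\theta\le\omega_M$. Once this one inequality is secured, the rest is the routine invariant-region bookkeeping sketched above.
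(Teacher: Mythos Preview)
Your approach is genuinely different from the paper's, and the difference is instructive. The paper avoids touching the artificial density $\rho_*$ at all: it chooses the symmetric convex function
\[
S_M(v)=(-\omega_M-v)_+^2+(v-\omega_M)_+^2,
\]
observes that the associated entropy $\eta_{S_M}(\rho,u)$ vanishes exactly on the box $\{-\omega_M\le\omega_1<\omega_2\le\omega_M\}$, and then applies Proposition~\ref{prop:EntropyDissipationJunction} directly to $S_M$. An integration of the entropy inequality over $(0,T)\times(0,\infty)$ plus the boundary flux inequality $\sum_k A_k G_{S_M}(\bar\rho_k,\bar u_k)\le 0$ yields $\sum_k A_k\int\eta_{S_M}(\rho_k,u_k)(T,x)\,dx\le 0$, hence $=0$, hence the box is invariant. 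No information about $\rho_*$ is ever needed; the symmetry of $S_M$ is what makes Proposition~\ref{prop:EntropyDissipationJunction} applicable, and this is the real point of that proposition.

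Your route---bound $a_\gamma\rho_*^\theta\le\omega_M$ first, then push the invariant box through each half-Riemann problem---can be made to work, but your write-up has two gaps. First, $h(\rho,u)=\tfrac18(\omega_1+\omega_2)^2+\tfrac{\gamma-1}{16}(\omega_2-\omega_1)^2$ is \emph{not} a function of $\max(|\omega_1|,|\omega_2|)$ alone, so the conversion between $h$-bounds and $\omega$-bounds that you invoke does not hold as stated; the stagnation-enthalpy inequality from Corollary~\ref{corr:NonIncreasingEnergy} does not directly give $a_\gamma\rho_*^\theta\le\omega_M$. Second, at your threshold $\tilde\rho_*$ with $a_\gamma\tilde\rho_*^\theta=\omega_M$, the sentence ``every trace then has $\bar u_k\le\omega_M\le$ the sonic speed built from $\tilde\rho_*$, forcing non-negative outflow on each pipe'' is incorrect: $\omega_M=a_\gamma\tilde\rho_*^\theta=\tfrac{2}{\gamma-1}\sqrt{\kappa\gamma}\,\tilde\rho_*^\theta$ is \emph{larger} than the sound speed $\sqrt{\kappa\gamma}\,\tilde\rho_*^\theta$ for $\gamma<3$, and in any case traces with $\bar u_k<0$ are not excluded this way (initial data in $\mathcal C_*$ still produce negative outflow). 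A correct version of this step would instead argue that the density $\rho_+$ from the existence proof satisfies $a_\gamma\rho_+^\theta\le\omega_M$ because $(\rho_+,0)$ lies on some $\mathcal W_2^-(\hat\rho_k,\hat u_k)$ and $\omega_1$ is non-increasing along forward $2$-waves---but that requires citing or reproving the shock-curve monotonicity underlying the Chueh--Conley--Smoller invariant region, which is more work than the paper's three-line entropy argument.
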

\begin{proof}
We define the symmetric, positive function
\begin{equation*}
S_M(v)=(-\omega_M-v)^2_+ +(v-\omega_M)^2_+,\quad v\in\mathbb R.
\end{equation*}
As proven in \cite{Ho2020}, the definition of $\eta_{S_M}$ implies
\begin{equation}\label{eq:ProofRiemannInvariantsEntropyEquivalence}
\eta_{S_M}(\rho,u)=0,\text{ if and only if }-\omega_M\le\omega_1(\rho, u)<\omega_2(\rho, u)\le\omega_M.
\end{equation}
Furthermore, the divergence theorem and the entropy condition give
\begin{gather*}
\int_0^\infty \eta_{S_M}(\rho_k,u_k)(T,x)\,\mathrm{d}x-\int_0^\infty \eta_{S_M}(\rho_k,u_k)(0,x)\,\mathrm{d}x\\
-\int_0^T G_{S_M}(\rho_k,u_k)(t,0)\,\mathrm{d}t\le 0,
\end{gather*}
for $T>0$. Taking the sum over $k$, Proposition \ref{prop:EntropyDissipationJunction} and (\ref{eq:ProofRiemannInvariantsEntropyEquivalence}) lead to
\begin{align*}
0\le \sum_{k=1}^d A_k \int_0^\infty \eta_{S_M}(\rho_k,u_k)(T,x)\,\mathrm{d}x\le \sum_{k=1}^d A_k \int_0^\infty \eta_{S_M}(\rho_k,u_k)(0,x)\,\mathrm{d}x=0.
\end{align*}
The result follows from (\ref{eq:ProofRiemannInvariantsEntropyEquivalence}).
\end{proof}



\section{Numerical results}
\label{sec:NumericalResults}
We give an example in which the new coupling condition produces the physically correct wave types. This observation is based on the assumption of the appearance of turbulence at the junction. The other coupling conditions produce different wave types. Furthermore, we study level sets associated to unphysical coupling conditions. We will consider the coupling conditions with equal pressure, equal momentum flux, equal stagnation enthalpy and the artificial density coupling condition.

\subsection{A numerical example}
We consider the shallow water equations and set $\gamma = 2$, $\kappa=5$. We aim to compute solutions to the gerenalized Riemann problem in the sense of Lax. \medskip\\
The implementation is based on Newton's method applied to the coupling condition. The unknowns are the parameter of the reversed $2$-wave curve of the initial data. We will consider examples which can be solved in the subsonic region. Therefore, we can use the function $R_*$ (see Definition \ref{def:PStar}) to implement the artificial density coupling condition.\medskip\\
We consider a three junction situation with one ingoing and two outgoing pipelines. The idea of this example is to assume that the sum of the momentum of the initial states is zero and the initial densities coincide. For this example we clearly get a stationary solution if we take the coupling condition with equal pressure. To make the example more concrete, we take the initial data in Table \ref{tab:Ex1InitialData}.
\begin{table}
\begin{footnotesize}
\begin{center}
\begin{tabular}{|c|c|c|}
	\hline
	pipeline & $\hat\rho_{k}$ & $\hat\rho_{k} \hat u_{k}$ \\
	\hline
	1 & $+1.0000$ & $-1.0000$\\
	\hline
	2 & $+1.0000$ & $+0.5000$\\
	\hline
	3 & $+1.0000$ & $+0.5000$\\
	\hline
\end{tabular}
\end{center}
\end{footnotesize}
\caption{Initial data}
\label{tab:Ex1InitialData}
\end{table}
The traces and energy dissipation of the numerical solutions are given in Table \ref{tab:Ex1NumericalResults}.
\begin{table}
\begin{scriptsize}
\begin{center}
\begin{tabular}{|c|c|c|c|c|c|c|c|c|}
	\hline
	& \multicolumn{2}{c|}{\thead{Equal \\ density}}& \multicolumn{2}{c|}{\thead{Equal \\ momentum \\ flux}}& \multicolumn{2}{c|}{\thead{Equal \\ stagnation \\ enthalpy}}& \multicolumn{2}{c|}{\thead{Equal \\ artificial \\ density}}\\
	\hline
	pipeline & $\bar\rho_{k}$ & $\bar\rho_{k} \bar u_{k}$ & $\bar\rho_{k}$ & $\bar\rho_{k} \bar u_{k}$ & $\bar\rho_{k}$ & $\bar\rho_{k} \bar u_{k}$ & $\bar\rho_{k}$ & $\bar\rho_{k} \bar u_{k}$ \\
	\hline
	1 & $+1.0000$ & $-1.0000$ & $+0.8964$ & $-1.1981$ & $+0.8518$ & $-1.2670$ & $+1.1776$ & $-0.5417$\\
	\hline
	2 & $+1.0000$ & $+0.5000$ & $+1.0266$ & $+0.5991$ & $+1.0356$ & $+0.6335$ & $+0.9346$ & $+0.2708$\\
	\hline
	3 & $+1.0000$ & $+0.5000$ & $+1.0266$ & $+0.5991$ & $+1.0356$ & $+0.6335$ & $+0.9346$ & $+0.2708$\\
	\hline
	\thead{Energy \\ dissipation} 
	& \multicolumn{2}{c|}{$-7.5000\times 10^{-2}$}
	& \multicolumn{2}{c|}{$-1.725\times 10^{-2}$}
	& \multicolumn{2}{c|}{$\approx 0$}
	& \multicolumn{2}{c|}{$-1.3852\times 10^{-1}$}\\
	\hline
\end{tabular}
\end{center}
\end{scriptsize}
\caption{Numerical results}
\label{tab:Ex1NumericalResults}
\end{table}
The generalized Riemann problems are solved by the $2$-waves in Table \ref{tab:Ex1WaveTypes}.\medskip\\
\begin{table}
\begin{footnotesize}
\begin{center}
\begin{tabular}{|c|c|c|c|c|}
	\hline
	pipeline & \thead{Equal \\ density}
	& \thead{Equal \\ momentum flux}
	& \thead{Equal \\ stagnation enthalpy}
	& \thead{Equal \\ artificial density}\\
	\hline
	1 & no waves & rarefaction wave & rarefaction wave & shock\\
	\hline
	2 & no waves & shock & shock & rarefaction wave\\
	\hline
	3 & no waves & shock & shock & rarefaction wave\\
	\hline
\end{tabular}
\end{center}
\end{footnotesize}
\caption{Wave types}
\label{tab:Ex1WaveTypes}
\end{table}

\noindent
We make the following observations:
\begin{itemize}
	\item The $2$-waves types obtained by solving with the artificial density coupling condition are different to the other coupling conditions. The wave types of the artificial density coupling conditions seem to be the physically correct ones. More precisely, we expect to have a shock in the incoming pipeline and rarefaction waves in the outgoing pipelines due to turbulence at the junction.
	\item The most energy is dissipated at the junction when the artificial density coupling condition is imposed. 
	\item If the artificial density coupling condition is imposed, the momentum traces at the junction are smaller compared to the other coupling conditions. The momentum traces for the coupling conditions with equal pressure, equal momentum flux or equal stagnation enthalpy differ less strongly in comparison with each other.
\end{itemize}

\subsection{Level sets associated the coupling conditions}
In this section we consider the geometry of level sets corresponding to different coupling conditions. More precisely, we drop the condition on conservation of mass and compute the sets in which the attained boundary values may lie. For a solution to the generalized Riemann problem, the traces $\{(\bar\rho_k,\bar\rho_k\bar u_k)\,|\,k=1,\dots,d \}$ are contained in one of these level sets. 
More precisely, we consider
\begin{equation}\label{eq:DefLevelSetNumSimSubsonic}
	\{(\rho,\rho u)\in D\,|\, \mathcal H(\rho,u)=\mathcal H(\rho_0,u_0)\},
\end{equation}
where $(\rho_0,\rho_0 u_0)\in D$ is a fixed (subsonic) state and $\mathcal H$ denotes the pressure, momentum flux or stagnation enthalpy. For the artificial density coupling condition, we consider the set
\begin{equation*}
	\mathcal V(\rho_*,0),\quad\text{for suitable }\rho_*\ge 0,
\end{equation*}
which coincides with the definition in \eqref{eq:DefLevelSetNumSimSubsonic} in the subsonic case by taking $\mathcal H=R_*$ (see Definition \ref{def:PStar}).
We set $\gamma=1.4$, $\kappa=1$ and $(\rho_0,\rho_0 u_0)=(1,0)$. This choice leads to the level sets displayed in Figure \ref{fig:LevelSetsCC}.\medskip\\
\begin{figure}
  \centering
  \begin{minipage}{.48\linewidth}
    \centering
    \subcaptionbox{Equal pressure}[5cm]{\includegraphics[trim=5cm 10cm 5cm 9cm,clip=true,width=0.6\linewidth,height=0.6\linewidth]{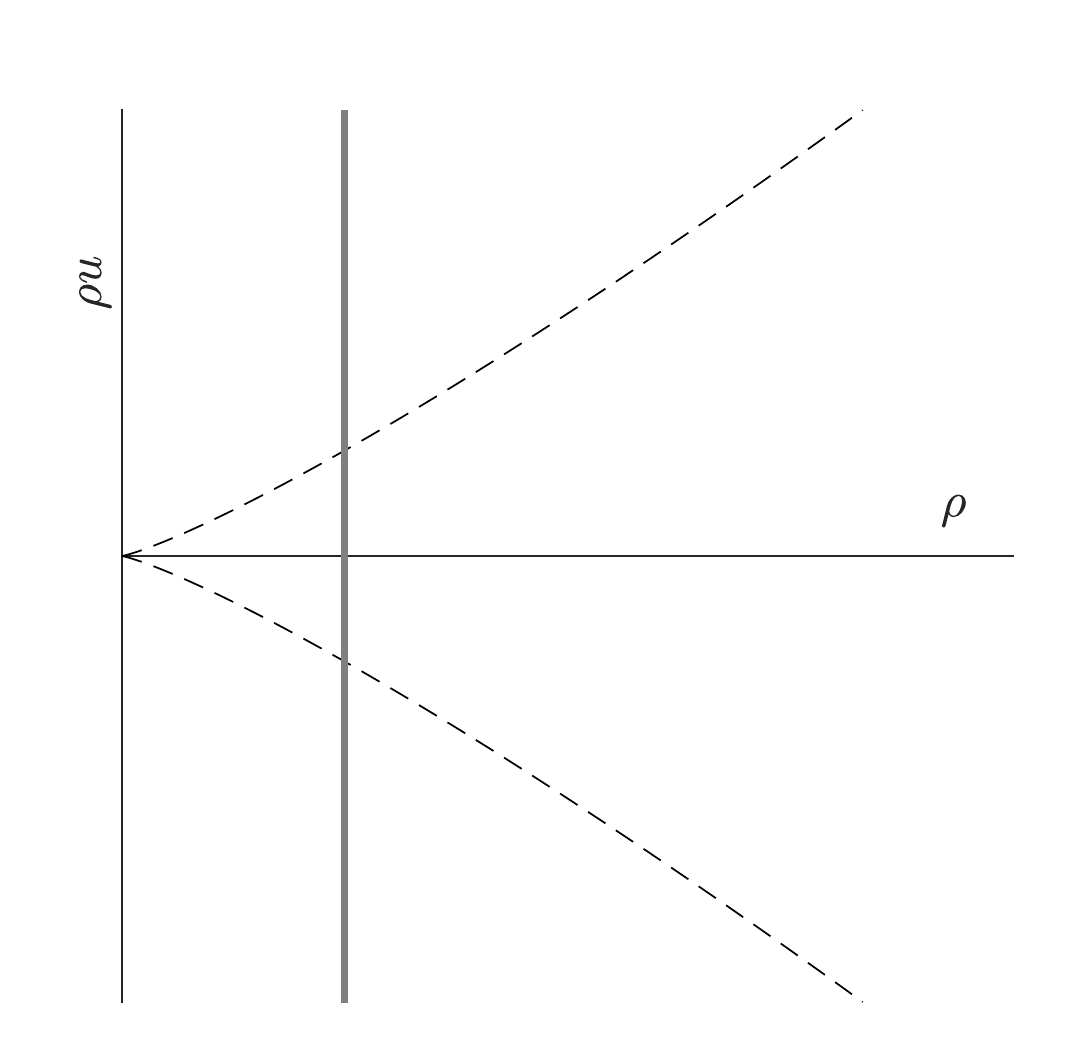}}\\
    \subcaptionbox{Equal momentum flux}[5cm]{\includegraphics[trim=5cm 10cm 5cm 9cm,clip=true,width=0.6\linewidth,height=0.6\linewidth]{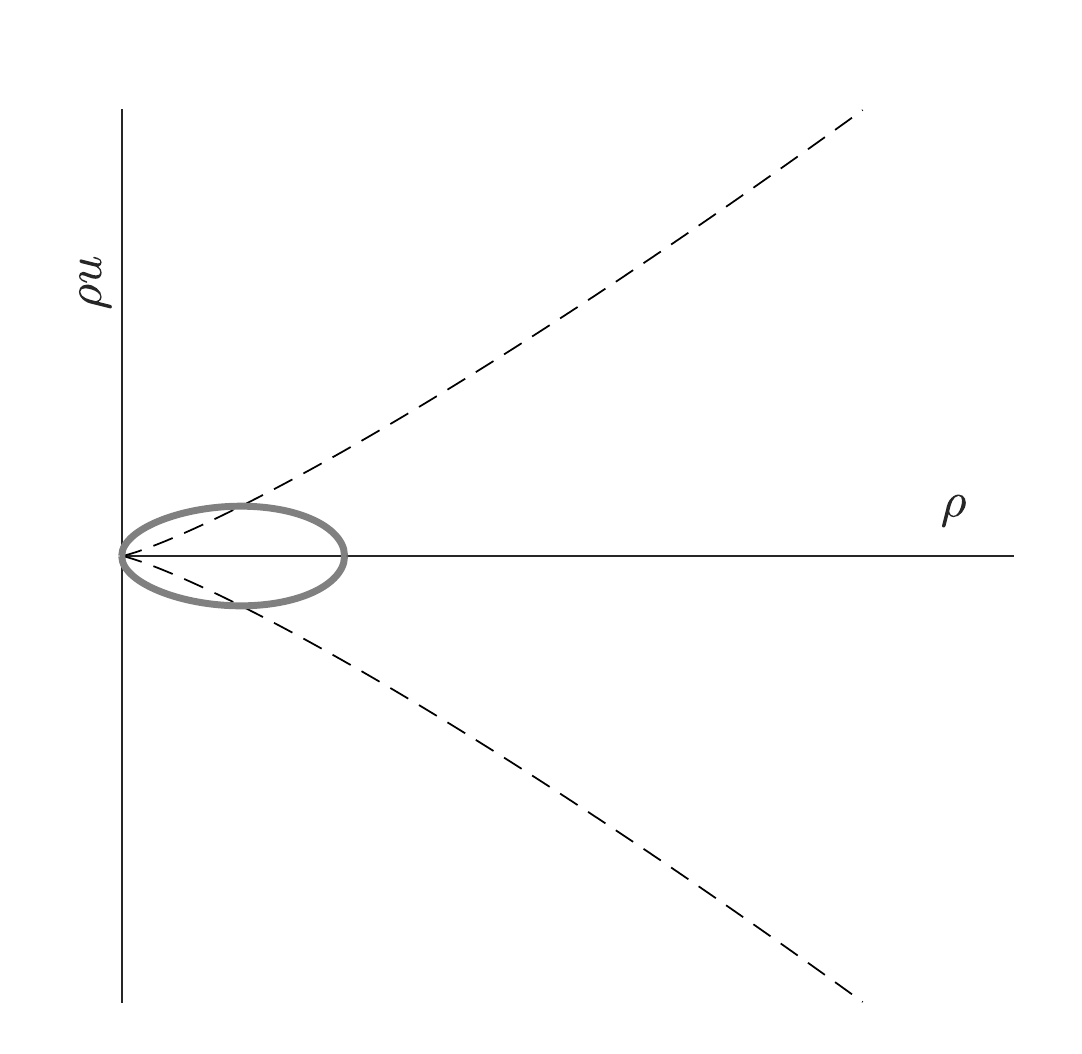}}
  \end{minipage}\quad
  \begin{minipage}{.48\linewidth}
    \centering
    \subcaptionbox{Equal stagnation enthalpy}[5cm]
      {\includegraphics[trim=5cm 10cm 5cm 9cm,clip=true,width=0.6\linewidth,height=0.6\linewidth]{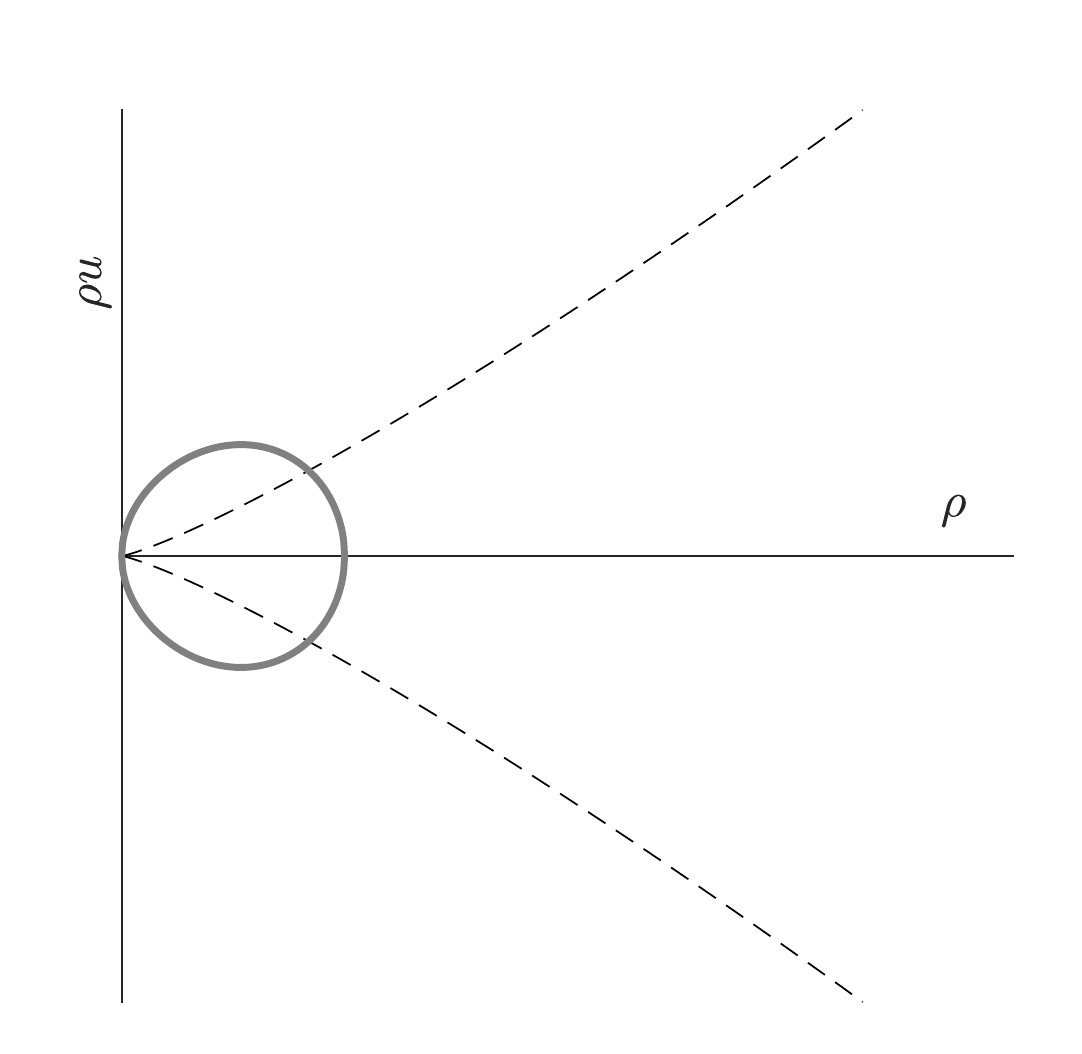}}\\
    \subcaptionbox{Artificial density}[5cm]
      {\includegraphics[trim=5cm 10cm 5cm 9cm,clip=true,width=0.6\linewidth,height=0.6\linewidth]{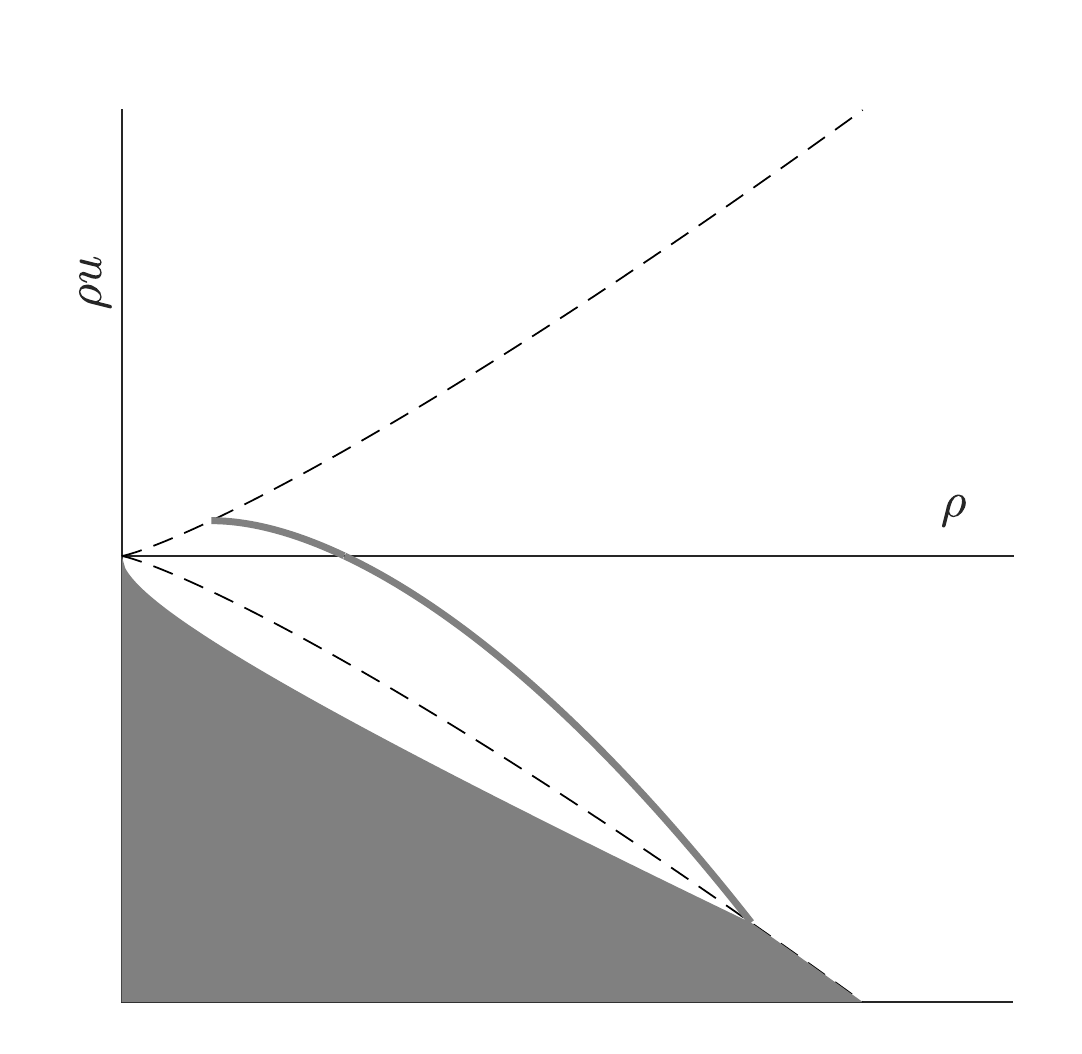}}
  \end{minipage}
  \caption{Level sets for different coupling conditions}\label{fig:LevelSetsCC}
\end{figure}
\noindent
We make the following observations:
\begin{itemize}
	\item The coupling conditions with equal momentum flux and stagnation enthalpy induce bounded level sets.
	\item The level sets induced by equal pressure, momentum flux and stagnation enthalpy are symmetric w.r.t. the $\rho$-axis and have the same tangent at $(1,0)$. This geometric property may lead to asymptotically similar behaviour of the coupling conditions near a stationary solution.
\end{itemize}

\section{Full gas dynamics}
\label{sec:FullGasDynamics}
In this section, we apply the derivation technique to full gas dynamics. We compute the maximum entropy dissipating kinetic coupling condition for kinetic models with the standard Maxwellian, e.g. the Boltzmann equation, the linear Boltzmann equation or the Boltzmann BGK model. We maximize the entropy dissipation and not the energy dissipation since energy is conserved. Again, we define a macroscopic coupling condition which can be formally obtained by a macroscopic limit. Notice that the formal macroscopic limit of the kinetic Boltzmann (type) equations is given by the full compressible Euler equations. The aim of this section is to underline that the presented approach is quite general and can be adopted easily to other hyperbolic systems equipped by a kinetic model and an entropy. Furthermore, the obtained results are very similar to the results for the isentropic gas equations.
\subsection{Derivation of the kinetic coupling condition}
First, we recall some basic definitions and explain the setting. As before, we consider a network of one-dimensional pipelines. We assume that one of the kinetic equations mentioned above is satisfied in the interior of the pipelines. The kinetic Boltzmann (type) equations admit the standard Maxwellian
\begin{equation}
M_{\rho, u, \theta}(\xi)=\frac{\rho}{\sqrt{2 \pi\theta}} \exp\left(\frac{-(u-\xi)^2}{2\theta}\right),
\end{equation}
for $\rho\ge 0$, $u\in \mathbb R$, $\theta > 0$.
Again, we define a kinetic coupling condition at the junction. Since energy is conserved in full gas dynamics, we obtain a second natural condition in addition to the conservation of mass. Nevertheless, two conditions are not enough to select a unique kinetic coupling condition. Therefore, we aim to select the kinetic coupling condition given by
\begin{equation}
\Psi\colon L^1_\mu((-\infty,0)_\xi,[0,\infty))^d\to L^1_\mu((0,\infty)_\xi,[0,\infty))^d;\quad g\mapsto\Psi[g],
\end{equation}
which conserves mass and energy and dissipates as much entropy as possible. More precisely, we minimize
\begin{equation}
\sum_{k=1}^d A_k \int_0^\infty \xi\, \Psi^k[g](\xi) \log \Psi^k[g](\xi)\,\mathrm{d}\xi,
\end{equation}
with respect to
\begin{align}
\sum_{k=1}^d A_k\left(\int_0^\infty \xi\, \Psi^k[g](\xi) \,\mathrm{d}\xi+\int_{-\infty}^0 \xi\, g^k(\xi) \,\mathrm{d}\xi\right)&=0,\label{eq:ConservationMassBoltzmann}\\
\sum_{k=1}^d A_k\left(\int_0^\infty \xi^3\, \Psi^k[g](\xi) \,\mathrm{d}\xi+\int_{-\infty}^0 \xi^3\, g^k(\xi) \,\mathrm{d}\xi\right)&=0.\label{eq:ConservationEngyBoltzmann}
\end{align}
The unique minimizer of this problem is given by 
\begin{equation}
\Psi^k[g](\xi)=M_{\rho_*,0,\theta_*}(\xi),\quad\text{for }\xi>0,
\end{equation}
where $\rho_*\ge 0,\,\theta_*>0$ are chosen such that (\ref{eq:ConservationMassBoltzmann} -- \ref{eq:ConservationEngyBoltzmann}) hold.
The proof works similar to (\ref{eq:HilfMaximumEntropyDissipation}):\\
Since $v\mapsto v \log v$ is convex on $[0,\infty)$ and admits the derivative $v\mapsto \log v+1$, we get
\begin{align}
&\sum_{k=1}^d A_k \int_0^\infty \xi\, \Psi^k[g](\xi) \log \Psi^k[g](\xi)\,\mathrm{d}\xi\nonumber\\
\ge&\sum_{k=1}^d A_k\left( \int_0^\infty \xi\, M_{\rho_*,0,\theta_*}(\xi) \log M_{\rho_*,0,\theta_*}(\xi)\,\mathrm{d}\xi\right)\nonumber\\
&+\sum_{k=1}^d A_k\left(\int_0^\infty \xi\, [\log M_{\rho_*,0,\theta_*}(\xi)+1]\left(\Psi^k[g](\xi)-M_{\rho_*,0,\theta_*}(\xi)\right)\,\mathrm{d}\xi\right)\nonumber\\
=& \sum_{k=1}^d A_k\int_0^\infty \xi\, M_{\rho_*,0,\theta_*}(\xi) \log M_{\rho_*,0,\theta_*}(\xi)\,\mathrm{d}\xi.
\end{align}
The last step follows by
\begin{equation}
\log M_{\rho_*,0,\theta_*}(\xi)+1=\log \frac{\rho}{\sqrt{2 \pi\theta}} +1 -\frac{\xi^2}{2\theta}
\end{equation}
and (\ref{eq:ConservationMassBoltzmann} -- \ref{eq:ConservationEngyBoltzmann}). It can be easily checked that for every $g$ there exists $(\rho_*,0,\theta_*)$ such that (\ref{eq:ConservationMassBoltzmann} -- \ref{eq:ConservationEngyBoltzmann}) hold for $\Psi^k[g](\xi)=M_{\rho_*,0,\theta_*}(\xi)$. By convexity of $v\mapsto v \log v$, it follows that entropy is non-increasing at the junction, i.e.
\begin{equation}
\sum_{k=1}^d A_k\left(\int_0^\infty \xi\, M_{\rho_*,0,\theta_*}(\xi) \log M_{\rho_*,0,\theta_*}(\xi)\,\mathrm{d}\xi+\int_{-\infty}^0 \xi\, g(\xi) \log g(\xi)\,\mathrm{d}\xi\right)\le 0.
\end{equation}

\subsection{The full Euler equations on networks and a new coupling condition}
Next, we consider the macroscopic limit. As mentioned above, the kinetic equation converges formally towards the full Euler equations for ideal polytropic gas given by
\begin{align}
\begin{cases}
\partial_t \rho +\partial_x(\rho u)&=0,\\
\partial_t(\rho u)+\partial_x(\rho u^2+\rho\theta)&=0,\\
\partial_t(\frac{\rho u^2}{2}+\frac{\rho\theta}{2})+\partial_x(\frac{\rho u^3}{2}+\frac{3}{2}\rho u \theta)&=0,
\end{cases}\quad \text{for a.e. }t>0, x\in\mathbb R,
\end{align}
with density $\rho\ge 0$, mean velocity $u\in\mathbb R$, temperature $\theta>0$ and adiabatic index $\gamma=3$. The equations of full gas dynamics model conservation of mass, momentum and energy. As usually, we impose the additional entropy condition
\begin{equation}
\partial_t\left(\rho \log \left( \frac{\rho}{\theta^{1/2}}\right)\right)+\partial_x\left(\rho u \log \left( \frac{\rho}{\theta^{1/2}}\right) \right)\le 0,\quad \text{for a.e. }t>0,x\in\mathbb R.
\end{equation}
The full Euler equations on networks were studied before by several authors \cite{CoMa2010, CoMa2008, He2008, LaMi2018}. We summarize the main ideas of the constructed coupling conditions. Analogous to isentropic gas dynamics, conservation of mass at the junction is imposed
\begin{equation}
	\sum_{k=1}^d A_k\bar{\rho}_k\bar{u}_k=0,\quad\text{a.e. }t>0.
\end{equation}
Since energy is conserved in full gas dynamics, we additionally assume that energy is conserved at the junction, i.e.
\begin{equation}
	\sum_{k=1}^d A_k\overline{\left(\frac{\rho u^3}{2}+\frac{3}{2}\rho u \theta \right)_k}=0.
\end{equation}
There are more conditions needed to single out a unique solution. Most of them are a straight forward extension of a coupling condition for isentropic gas.
We give a short overview of the coupling conditions in the literature:\\
Colombo and Mauri \cite{CoMa2008} introduced equality of momentum flux at the junction
\begin{equation}
\overline{\left(\rho u^2+\rho\theta \right)_k}=\mathcal H_{\mathrm{MF}}(t),\quad\text{for a.e. } t>0, k=1,\dots,d.
\end{equation}
Herty \cite{He2008} used equality of pressure
\begin{equation}
\bar\rho_k\bar\theta_k=\mathcal H_p(t),\quad\text{for a.e. } t>0, k=1,\dots,d.
\end{equation}
Networks consisting of $d=2$ pipelines with different cross-sectional area were studied by Colombo and Marcellini \cite{CoMa2010} with different coupling conditions. One of them is based on a smooth approximation of the discontinuity in the cross-section.
Lang and Mindt \cite{LaMi2018} impose equality of stagnation enthalpy
\begin{equation}
\overline{\left(\frac{u^2}{2}+\frac{3}{2}\theta \right)_k}=\mathcal H_{\mathrm{SE}}(t),\quad\text{for a.e. } t>0, k=1,\dots,d,
\end{equation}
and equality of entropy for traces with outgoing flow
\begin{gather}
\log\left(\frac{\rho}{\theta^{1/2}}\right)=\mathcal H_{\mathrm{S}}(t),\quad\text{for a.e. }t>0, \text{ for }\bar u_k>0,\\
\text{with}\quad \mathcal H_{\mathrm{S}}(t)=\frac{\sum_{\bar u_k<0}A_k \overline{\left(\rho u \log\left(\frac{\rho}{\theta^{1/2}}\right) \right)_k}}{\sum_{\bar u_k<0}A_k \bar \rho_k\bar u_k}.
\end{gather}
These two conditions imply conservation of energy and entropy at the junction. Notice that conservation of entropy at the junction is not consistent with the fact that entropy can be dissipated in full gas dynamics. \medskip\\
In full gas dynamics an additional phenomena appears since the number of ingoing/outgoing characteristics at the junction can change in the subsonic region. This fact makes it more complicated to prove existence and uniqueness results. Nevertheless, we can use the formal arguments in Section \ref{sec:MotivationandDerivation} and the derivation in the previous subsection to define the following new coupling condition for full gas dynamics.
\begin{definition}\label{def:generalizedRiemannFullEuler}
Fix initial data $(\hat\rho_k,\hat u_k,\hat\theta_k)\in D_{3 \times 3}=\{(\rho,u,\theta)\,|\,\rho> 0, u\in\mathbb R,\theta>0 \text{ or }\rho=u=\theta=0\},k=1,\dots,d$. Then, we call $(\rho_k,u_k,\theta_k)\colon (0,\infty)_t\times(0,\infty)_x\to D$ a weak solution to the generalized Riemann problem if the following assertions hold true:
\begin{itemize}
		\item [RP0:] The solution satisfies the initial condition
\begin{equation*}
(\rho_k,u_k,\theta_k)(0+,x)=(\hat{\rho}_k,\hat{u}_k,\hat{\theta}_k)\in D_{3 \times 3},\quad \text{for  }x>0,\,k=1,\dots,d;
\end{equation*}
    \item [RP1:] There exists $(\rho_*,0,\theta_*)\in D_{3\times 3}$ such that $(\rho_k,u_k,\theta_k)$ is equal to the restriction to $x>0$ of the Lax solution to the standard Riemann problem with initial condition
			\begin{equation*}
				(\rho_k,u_k,\theta_k)(0+,x)=
				\begin{cases}
					(\hat{\rho}_k,\hat{u}_k,\hat{\theta}_k),\quad& x>0,\\
					(\rho_*,0,\theta_*),\quad& x<0,
				\end{cases}
			\end{equation*}
			for all $k=1,\dots,d$;
    \item [RP2:] Mass is conserved at the junction
			\begin{equation*}
				\sum_{k=1}^d A_k\bar{\rho}_k\bar{u}_k=0,\quad\text{for all }t>0;
			\end{equation*}
		\item [RP3:] Energy is conserved at the junction
			\begin{equation*}
				\sum_{k=1}^d A_k\overline{\left(\frac{\rho u^3}{2}+\frac{3}{2}\rho u \theta \right)_k}=0,\quad\text{for all }t>0.
			\end{equation*}
\end{itemize}
\end{definition}
Notice, that this condition leads to conservation of mass and energy at the junction by definition. Furthermore, entropy is non-increasing at the junction by the entropy formulation of boundary conditions $\mathcal E(\rho_*,0,\theta_*)$, i.e.
\begin{equation}
\sum_{k=1}^d A_k\overline{\left(\rho u \log\left(\frac{\rho}{\theta^{1/2}}\right) \right)_k}\le 0.
\end{equation}
Therefore, the new coupling condition satisfies some necessary physical properties.

\section{Conclusion}
\label{sec:OutlookRemarks}
We introduced a new coupling condition for isentropic gas and proved existence and uniqueness of solutions to the generalized Riemann and Cauchy problem.\medskip\\
The derivation of the coupling condition is based on the kinetic model and the selection of the unique kinetic coupling condition which conserves mass and dissipates as much energy as possible. The obtained kinetic coupling condition distributes the incoming kinetic data into all pipelines by the same Maxwellian with suitable artificial density and zero speed. Formal arguments lead to a corresponding macroscopic definition to the generalized Riemann problem. In this definition the artificial state with zero speed appears as the (left) initial state for a standard Riemann problem.\medskip\\
In addition to the derivation, we proved physical properties of the coupling condition. The coupling condition ensures that energy is non-increasing at the junction and leads to a maximum principle on the Riemann invariants. Furthermore, a relation of the traces of the stagnation enthalpy at the junction was given. Notice that these properties hold true due to the choice of an artificial state with zero speed.\medskip\\
We gave an example in which the new coupling condition is the only condition producing the physically correct wave types. The solutions to the generalized Riemann problems were computed numerically. Furthermore, we studied level sets related to different coupling conditions and their geometry.\medskip\\
Finally, we considered the coupling condition in view of the model hierarchy of gas dynamics by applying the same approach to full gas dynamics and Boltzmann (type) equations. We took the kinetic coupling conditions with conservation of mass and energy at the junction and maximize the entropy dissipation. This consideration leads to very similar results. In particular, we obtained an artificial state with suitable density and temperature and again with zero speed. \medskip\\
In summary, we defined a new coupling condition, derived several physical and mathematical properties and gave a motivation. Future research may consider more detailed numerical aspects and the rigorous justification of the considerations in Section \ref{sec:MotivationandDerivation}.

\providecommand{\href}[2]{#2}
\providecommand{\arxiv}[1]{\href{http://arxiv.org/abs/#1}{arXiv:#1}}
\providecommand{\url}[1]{\texttt{#1}}
\providecommand{\urlprefix}{URL }


\begin{thebibliography}{10}

\bibitem{BHK2006II}
\newblock M.~Banda, M.~Herty and A.~Klar,
\newblock {Coupling conditions for gas networks governed by the isothermal
  {E}uler equations},
\newblock \emph{Netw. Heterog. Media}, \textbf{1} (2006), 295--314.

\bibitem{BHK2006}
\newblock M.~Banda, M.~Herty and A.~Klar,
\newblock {Gas flow in pipeline networks},
\newblock \emph{Netw. Heterog. Media}, \textbf{1} (2006), 41--46.

\bibitem{BeBo2000}
\newblock F.~Berthelin and F.~Bouchut,
\newblock {Solution with finite energy to a BGK system relaxing to isentropic
  gas dynamics},
\newblock \emph{Ann. Fac. Sci. Toulouse}, \textbf{9} (2000), 605--630.

\bibitem{BeBo2002Kinetic}
\newblock F.~Berthelin and F.~Bouchut,
\newblock {Kinetic invariant domains and relaxation limit from a BGK model to
  isentropic gas dynamics},
\newblock \emph{Asymtotic Anal.}, \textbf{31} (2002), 153--176.

\bibitem{BeBo2002Boundary}
\newblock F.~Berthelin and F.~Bouchut,
\newblock {Weak entropy boundary conditions for isentropic gas dynamics via
  kinetic relaxation},
\newblock \emph{J. Differential Equations}, \textbf{185} (2002), 251--270.

\bibitem{Bo1999}
\newblock F.~Bouchut,
\newblock Construction of {BGK} models with a family of kinetic entropies for a
  given system of conservation laws,
\newblock \emph{J. Statist. Phys.}, \textbf{95} (1999), 113--170.

\bibitem{Br2000}
\newblock A.~Bressan,
\newblock {The One-Dimensional {C}auchy Problem},
\newblock in \emph{{Hyperbolic Systems of Conservation Laws}}, vol.~20 of
  Oxford Lecture Series in Mathematics and its Application,
\newblock Oxford University Press, Oxford, 2000.

\bibitem{BCGHP2014}
\newblock A.~Bressan, S.~{\v{C}}ani{\'{c}}, M.~Garavello, M.~Herty and
  B.~Piccoli,
\newblock {Flows on networks: recent results and perspectives},
\newblock \emph{EMS Surv. Math. Sci.}, \textbf{1} (2014), 47--111.

\bibitem{CoGa2010}
\newblock G.~Cocolite and M.~Garavello,
\newblock {Vanishing viscosity for traffic on networks},
\newblock \emph{SIAM J. Math. Anal.}, \textbf{42} (2010), 1761--1783.

\bibitem{CoGa2006}
\newblock R.~Colombo and M.~Garavello,
\newblock {A well posed {R}iemann problem for the $p$-system at a junction},
\newblock \emph{Netw. Heterog. Media}, \textbf{1} (2006), 495--511.

\bibitem{CoGa2008}
\newblock R.~Colombo and M.~Garavello,
\newblock {On the {C}auchy problem for the $p$-system at a junction},
\newblock \emph{SIAM J. Math. Anal.}, \textbf{39} (2008), 1456--1471.

\bibitem{CHS2008}
\newblock R.~Colombo, M.~Herty and V.~Sachers,
\newblock On {$2\times2$} conservation laws at a junction,
\newblock \emph{SIAM J. Math. Anal.}, \textbf{40} (2008), 605--622.

\bibitem{CoHo2016}
\newblock R.~Colombo and H.~Holden,
\newblock Isentropic fluid dynamics in a curved pipe,
\newblock \emph{Zeitschrift f\"ur angewandte Mathematik und Physik},
  \textbf{67} (2016), 1--10.

\bibitem{CoMa2010}
\newblock R.~Colombo and F.~Marcellini,
\newblock {Coupling conditions for the 3x3 {E}uler system},
\newblock \emph{Netw. Heterog. Media}, \textbf{5} (2010), 675--690.

\bibitem{CoMa2008}
\newblock R.~Colombo and C.~Mauri,
\newblock {{E}uler system for compressible fluids at a junction},
\newblock \emph{J. Hyperbolic Differ. Equ.}, \textbf{5} (2008), 547--568.

\bibitem{Da1973}
\newblock C.~Dafermos,
\newblock The entropy rate admissibility criterion for solutions of hyperbolic
  conservation laws,
\newblock \emph{J. Differential Equations}, \textbf{14} (1973), 202--212.

\bibitem{Da2010}
\newblock C.~Dafermos,
\newblock \emph{Hyperbolic Conservation Laws in Continuum Physics}, vol. 325 of
  Grundlehren der mathematischen Wissenschaft,
\newblock 3rd edition,
\newblock Springer-Verlag Berlin Heidelberg, 2010.

\bibitem{DuLe1988}
\newblock F.~Dubois and P.~{LeFloch},
\newblock {Boundary conditions for nonlinear hyperbolic systems of conservation
  laws},
\newblock \emph{J. Differential Equations}, \textbf{71} (1988), 93--122.

\bibitem{He2008}
\newblock M.~Herty,
\newblock {Coupling conditions for networked systems of {E}uler equations},
\newblock \emph{SIAM J. Sci. Comput.}, \textbf{30} (2008), 1596--1612.

\bibitem{HeRa2006}
\newblock M.~Herty and M.~Rascle,
\newblock Coupling conditions for a class of second-order models for traffic
  flow,
\newblock \emph{SIAM J. Math. Anal.}, \textbf{38} (2006), 595--616.

\bibitem{HoRi1999}
\newblock H.~Holden and H.~Risebro,
\newblock Riemann problems with a kink,
\newblock \emph{SIAM J. Math. Anal.}, \textbf{30}.

\bibitem{Ho2020}
\newblock Y.~Holle,
\newblock Kinetic relaxation to entropy based coupling conditions for
  isentropic flow on networks,
\newblock \emph{J. Differential Equations}, \textbf{269} (2020), 1192--1225.

\bibitem{KSX1997}
\newblock P.~Kan, M.~Santos and Z.~Xin,
\newblock {Initial-boundary value problem for conservation laws},
\newblock \emph{Comm. Math. Phys.}, \textbf{186} (1997), 701--730.

\bibitem{LaMi2018}
\newblock J.~Lang and P.~Mindt,
\newblock {Entropy-preserving coupling conditions for one-dimensional {E}uler
  systems at junctions},
\newblock \emph{Netw. Heterog. Media}, \textbf{13} (2018), 177--190.

\bibitem{LPT1994}
\newblock P.-L. Lions, B.~Pertheme and E.~Tadmor,
\newblock Kinetic formulation of the isentropic gas dynamics and $p$-systems,
\newblock \emph{Commun. Math. Phys.}, \textbf{163} (1994), 415--431.

\bibitem{LiSm1980}
\newblock T.~P. Liu and J.~A. Smoller,
\newblock On the vacuum state for the isentropic gas dynamics equations,
\newblock \emph{Adv. in Appl. Math.}, \textbf{1} (1980), 345--359.

\bibitem{Re2015}
\newblock G.~Reigstad,
\newblock {Existence and uniqueness of solutions to the generalized {R}iemann
  problem for isentropic flow},
\newblock \emph{SIAM J. Appl. Math.}, \textbf{75} (2015), 679--702.

\bibitem{RFHY2015}
\newblock G.~Reigstad, T.~Fl{\aa}tten, E.~Haugen and T.~Ytrehus,
\newblock Coupling constants and the generalized {R}iemann problem for
  isothermal junction flow,
\newblock \emph{J. Hyperbolic Differ. Equ.}, \textbf{12} (2015), 37--59.

\end{thebibliography}
\end{document}